\newcommand{\R}{\mathbb{R}}
\newcommand{\N}[2]{\mathcal{N}(#1, #2)}
\newcommand{\Nat}{\mathbb{N}}
\newcommand{\E}[1]{\mathbb{E}\left[#1\right]}
\newcommand{\Var}[1]{\mathrm{Var}\left[#1\right]}
\newcommand{\Varboot}[1]{\mathrm{Var}_\flat\left[#1\right]}
\newcommand{\Ynb}{Y_n^\flat}
\newcommand{\trueSigma}{\Sigma^*}
\newcommand{\hatSigma}{\hat\Sigma}
\newcommand{\trueTheta}{\Theta^*}
\newcommand{\hatTheta}{\hat\Theta}
\newcommand{\xn}{x_n}
\newcommand{\zb}{z^\flat}
\newcommand{\zm}{z_-}
\newcommand{\znb}{z^{N^\flat}}
\newcommand{\zp}{z_+}
\newcommand{\deltaZ}{\Delta_Z(x)}
\newcommand{\probXBound}{p_s^X(x)}
\newcommand{\bZp}{\partial\mathcal{Z}_+}
\newcommand{\bZm}{\partial\mathcal{Z}_-}
\newcommand{\Zcb}{\mathcal{Z}^\flat}
\newcommand{\Is}{\mathcal{I}_n^{\mathfrak{S}}(t)}
\newcommand{\Ir}{\mathcal{I}^r_n(t)}
\newcommand{\Il}{\mathcal{I}^l_n(t)}
\newcommand{\Istable}{\mathcal{I}_s}
\newcommand{\infnorm}[1]{\left|\left|#1\right|\right|_\infty}
\newcommand{\InfNorm}[1]{\left|\left|\left|#1\right|\right|\right|_\infty}
\newcommand{\normtwo}[1]{\left|\left|#1\right|\right|_2}
\newcommand{\inv}[1]{#1^{-1}}
\newcommand{\Ziuv}{Z_{i,uv}}
\newcommand{\hatZiuv}{\hat Z_{i,uv}}
\newcommand{\Zbiuv}{Z_{i,uv}^\flat}
\newcommand{\Zbi}{Z_{i}^\flat}
\newcommand{\Prob}[1]{\mathbb{P} \left\{#1\right\}}
\newcommand{\Pb}{P^\flat}
\newcommand{\Nb}{\mathcal{N}^\flat}
\newcommand{\NN}{\mathcal{N}}
\newcommand{\Zp}{\mathcal{Z}_+}
\newcommand{\Zm}{\mathcal{Z}_-}
\newcommand{\Probboot}[1]{\mathbb{P^\flat} \left\{#1\right\}}
\newcommand{\Anb}{A_n^\flat}
\newcommand{\TauT}{\mathcal{T}_T}
\newcommand{\alphacorrected}{\alpha^*}
\newcommand{\An}{A_n}
\newcommand{\Sa}{\mathcal{S}}
\newcommand{\xalphan}{x^\flat_n(\alpha)}
\newcommand{\alphap}{\alpha^+}
\newcommand{\alpham}{\alpha^-}
\newcommand{\q}{\mathrm{q}} 
\newcommand{\Tnmax}{\mathbb{T}_{\nmax}} 
\newcommand{\Tauarr}{\mathcal{T}_{\leftrightarrow}}
\newcommand{\xalphann}[1]{x^\flat_{#1}(\alpha)}
\newcommand{\abs}[1]{\left|#1\right|}
\newcommand{\ex}[1]{e^{#1}}
\newcommand{\onenorm}[1]{\left|\left|#1\right|\right|_1}
\newcommand{\Zbound}[2]{\mathcal{Z}_{#1}(#2)}
\newcommand{\Zboundsqr}[2]{\mathcal{Z}^2_{#1}(#2)}
\newcommand{\probzbound}[2]{p_{\mathcal{Z}_{#1}}(#2)}
\newcommand{\probsigmazboundone}{p^{\Sigma_{Z 1}}_{_{s}}(x, q)}
\newcommand{\probsigmazboundtwo}{p^{\Sigma_{Z 2}}_{_{s}}(x)}
\newcommand{\probsigmay}{p^{\Sigma_Y}_{s}(x,q)}
\newcommand{\probSigmaBound}[2]{p^{\Sigma}(#2)}
\newcommand{\Zv}{\overline{Z}}
\newcommand{\Zhatvi}{\overline{\hat Z_i}}
\newcommand{\trueSigmaZ}{\Sigma^*_Z}
\newcommand{\hatSigmaZ}{\hat\Sigma_Z}
\newcommand{\hatSigmahatZ}{\hat\Sigma_{\hat Z}}
\newcommand{\Wi}{W^{(i)}}
\newcommand{\Sz}{S_Z}
\newcommand{\Sbz}{S_Z^\flat}
\newcommand{\Snbz}{S_Z^{n\flat}}
\newcommand{\Szn}{S_Z^n}
\newcommand{\Znormalized}{Z^{{S}}}
\newcommand{\Zbnormalized}{Z^{{S\flat}}}
\newcommand{\trueSigmaY}{\Sigma^*_Y}
\newcommand{\hatSigmaY}{\hat\Sigma_Y}
\newcommand{\mymin}[2]{\min_{#1}\left[#2\right]}
\newcommand{\kappaGamma}{\kappa_{\Gamma^*}}
\newcommand{\kappaSigma}{\kappa_{\Sigma^*}}
\newcommand{\Ra}{R_A}
\newcommand{\Xb}{X^\flat}
\newcommand{\bootTheta}{\hat\Theta}
\newcommand{\Ab}{A^\flat}
\newcommand{\Yb}{Y^\flat}
\newcommand{\empE}[1]{\mathbb{E}_{s}\left[#1\right]}
\newcommand{\Bernstain}[4]{\frac{\left(#1\right)#2}{3#3}\left(1+ \sqrt{1+\frac{9#3#4}{#2\left(#1\right)^2}} \right)}
\newcommand{\SigmaZDelta}{\Delta_{\Sigma_Z}}
\newcommand{\Rboot}{R_{A^\flat}}
\newcommand{\Rsigma}{R_{\Sigma}^{\pm}}
\newcommand{\probMoment}[2]{p^M_s(#1)}
\newcommand{\CA}{C_{A}}
\newcommand{\CAb}{C_{A^\flat}}
\newcommand{\hatCAb}{\hat{C}_{A^\flat}}
\newcommand{\deltaY}{\Delta_Y}
\newcommand{\IVar}[1]{\mathrm{Var}\left[#1\right]^{-1}}
\newcommand{\Zvi}{\overline{Z_i}}
\newcommand{\hatZvi}{\overline{\hat Z_i}}
\newcommand{\n}{\mathfrak{N}}
\newcommand{\Tn}{\mathbb{T}_n}
\newcommand{\nmin}{n_-}
\newcommand{\nmax}{n_+}
\newcommand{\numn}{\abs{\mathfrak{N}}}
\newcommand{\glestimation}{\hatTheta^{GL}}
\newcommand{\mbestimation}{\hatTheta^{MB}}
\newcommand{\tr}{tr}
\newcommand{\maxLambda}[1]{\Lambda\left(#1\right)}
\newcommand{\minLambda}[1]{\lambda\left(#1\right)}
\newcommand{\rank}{rank}
\newcommand{\Tau}{\mathcal{T}}
\newcommand{\xii}{\xi_i}
\newcommand{\hatxii}{\hat\xi_i}
\newcommand{\x}{\mathrm{x}}
\newcommand{\twoNorm}[1]{\lvert\lvert#1\rvert\rvert_{2}}
\newcommand{\Hnull}{\mathbb{H}_0}
\newcommand{\Halt}{\mathbb{H}_1}
\newcommand{\Rt}{R_{\hat T}}
\newcommand{\s}{\mathfrak{S}}
\numberwithin{equation}{section}
\theoremstyle{plain}
\newtheorem{theorem}{}[section]
\newtheorem{corollary}{}[section]
\newtheorem{lemma}{}[section]
\newtheorem{definition}{}[section]
\newtheorem{assumption}{}[section]
\newtheorem{remark}{}[section]
\begin{document}

\renewcommand{\thedefinition}{Definition \thesection.\arabic{definition}}
\renewcommand{\thelemma}{Lemma \thesection.\arabic{lemma}}
\renewcommand{\thetheorem}{Theorem \thesection.\arabic{theorem}}
\renewcommand{\theassumption}{Assumption \thesection.\arabic{assumption}}
\renewcommand{\theremark}{Remark \thesection.\arabic{remark}}
\renewcommand{\thecorollary}{Corollary \thesection.\arabic{corollary}}

\begin{frontmatter}

% "Title of the Paper"
\title{Change-point detection in high-dimensional covariance structure}
\runtitle{Change-point detection in high-dimensional covariance structure}

% indicate corresponding author with \corref{}
% \author{\fnms{John} \snm{Smith}\thanksref{t2}\corref{}\ead[label=e1]{smith@foo.com}\ead[label=e2,url]{www.foo.com}}
% \thankstext{t2}{Thanks to somebody} 
% \address{line 1\\ line 2\\ \printead{e1}\\ \printead{e2}}
\begin{aug}
	\author{\fnms{Valeriy} \snm{Avanesov}\corref{}\ead[label=e1]{avanesov@wias-berlin.de}},
	\author{\fnms{Nazar} \snm{Buzun}\ead[label=e2]{buzun@wias-berlin.de}}
	
	\address{WIAS\\Mohrenstr. 39\\ 
		10117 Berlin\\ 
		Germany\\
		\printead{e1,e2}}

	\runauthor{V. Avanesov, N. Buzun}
	\affiliation{WIAS}
	
\end{aug}

\begin{abstract}
	  ~In this paper we introduce a novel approach for an important problem of break detection. Specifically, we are interested in detection of an abrupt change in the covariance structure of a high-dimensional random process  -- a problem, which has applications in many areas e.g., neuroimaging and finance. The developed approach is essentially a testing procedure involving a choice of a critical level. To that end a non-standard bootstrap scheme is proposed and theoretically justified under mild assumptions. Theoretical study features a result providing guaranties for break detection. All the theoretical results are established in a high-dimensional setting (dimensionality $p \gg n$). Multiscale nature of the approach allows for a trade-off between sensitivity of break detection and localization. The approach can be naturally employed in an on-line setting. Simulation study demonstrates that the approach matches the nominal level of false alarm probability and exhibits high power, outperforming a recent approach.
\end{abstract}

\begin{keyword}[class=MSC]
\kwd[Primary ]{62M10, 62H15}
\kwd[; secondary ]{91B84, 62P10}
\end{keyword}

\begin{keyword}
	\kwd{multiscale}
	\kwd{bootstrap}
	\kwd{structural change}
	\kwd{critical value}
	\kwd{precision matrix}
\end{keyword}

% history:
% \received{\smonth{1} \syear{0000}}

\end{frontmatter}

\section{Introduction} 
The analysis of high dimensional time series is crucial for many fields including neuroimaging and financial engineering.
There, one often has  to deal with processes involving abrupt structural changes which necessitate a corresponding adaptation of a model and/or a strategy. Structural
break analysis comprises determining if an abrupt change is present in the given sample and if so,
estimating the change-point, namely the moment in time when it takes place. In literature both
problems may be referred to as {\it change-point} or {\it break detection}. In this study we will be using
terms {\it break detection }and {\it change-point localization }respectively in order to distinguish between
them.
The majority of approaches to the problem consider only a univariate process \cite{limitTheoremsCPAnalysis} \cite{aue2013}.
However, in recent years the interest for multi-dimensional approaches has increased. Most of them cover the case of fixed dimensionality \cite{Matteson2015} \cite{Lavielle2006} \cite{aue2009} \cite{xie2013} \cite{zou2014}. Some approaches \cite{cho2016, jirak2015, Cho2015} feature {\it high-dimensional} theoretical guaranties but only the case of dimensionality polynomially growing in sample size is covered. The case of exponential growth has not been considered so far.

In order to detect a break, a test statistic is usually computed for each point $t$ (e.g. \cite{Matteson2015}). The break is detected if the maximum of these values exceeds a certain \textit{threshold}. A proper choice of the latter may be a tricky issue. Consider a pair of plots (Figure \ref{figgg}) of the statistic $A(t)$ defined in Section \ref{apprsec}. It is rather difficult to see how many breaks are there, if any. The classic approach to the problem is based on the asymptotic behaviour of the statistic \cite{limitTheoremsCPAnalysis} \cite{aue2013} \cite{aue2009} \cite{jirak2015} \cite{biau2016} \cite{zou2014}. As an alternative, permutation \cite{jirak2015} \cite{Matteson2015} or parametric bootstrap may be used \cite{jirak2015}. Clearly, it seems attractive to choose the threshold in a solely data-driven way as it is suggested in the recent paper \cite{cho2016}, but a careful bootstrap validation is still an open question.

In the current study we are interested in a particular kind of a break -- an abrupt transformation in the inverse covariance matrix -- which is motivated by applications to neuroimaging. The covariance structure of data in functional Magnetic Resonance Imaging has recently drawn a lot of interest, as it encodes so-called functional connectivity networks \cite{sporns2011} which refer to the explicit influence among neural systems \cite{journals/brain/Friston11}. A rather popular approach to  inferencing these networks is based on estimating inverse covariance or precision matrices \cite{Allen2012}. The technique generally makes use of the observation that functional connectivity networks are of small-world type  \cite{sporns2011}, which makes sparsity assumptions feasible.  Analysing the dynamics of these networks is  particularly important for the research on neural diseases and also in the context of brain development with emphasis on characterizing the re-configuration of the brain during learning  \cite{Bassett_Wymbs_Porter_Mucha_Carlson_Grafton_2010}.

A similar problem is found in finance: the dynamics of the covariance structure of a high-dimensional process modelling exchange rates and market indexes is crucial for a proper asset allocation in a portfolio \cite{Serban2007,Bauwens2006,Engle1990,Mikosch2009}.

One approach to the  change-point localization is developed in \cite{Lavielle2006}, the corresponding significance testing problem is considered in \cite{aue2009}. However, neither of these papers address the high-dimensional case.

A widely used break detection approach (named CUSUM) \citep{Cho2015, aue2009, jirak2015} suggests to compute a statistic at a point $t$ as a distance of estimators of some parameter of the underlying distributions obtained using all the data before and after that point. This technique requires the whole sample to be known in advance, which prevents it from being used in \textit{online} setting. In order to overcome this drawback we propose a method ideologically similar to MOSUM \cite{Bauer1980} \cite{eichinger2018}: choose a window size $n \in \Nat$ and compute parameter estimators using only $n$ points before and $n$ points after the \textit{central point} $t$ (see Section \ref{apprsec} for formal definition). Window size $n$ is an important parameter and its choice is case-specific (see Section \ref{precsensitivitysec} for theoretical treatment of this issue). Using a small window results in high variability and low sensitivity, while a large window implies higher uncertainty in change-point localization yielding the issue of a proper choice of window size. The \textit{multiscale} nature of the proposed method enables us to 	incorporate the advantages of narrower and wider windows by considering multiple window sizes at once in order for wider windows to provide higher sensitivity while narrower ones improve change-point localization. Moreover, the local nature of the proposed statistic allows for detection of multiple change points if the change-points are not too close to each other (see Section \ref{precsensitivitysec}).

The contribution of our study is the development of a novel  break detection approach which is 

\begin{itemize}
	\item high-dimensional, allowing for up to exponential growth of the dimensionality with the window size
	\item suitable for on-line setting
	\item suitable for detection of multiple change-points
	\item multiscale, attaining trade-off between break detection sensitivity and change-point localization accuracy
	\item using a fully data-driven threshold selection algorithm rigorously justified under mild assumptions
	\item featuring formal sensitivity guaranties in high-dimensional setting
\end{itemize} 

\begin{figure}[!tbp]
	\centering
	\begin{minipage}[b]{0.48\textwidth}
		\includegraphics[width=\textwidth]{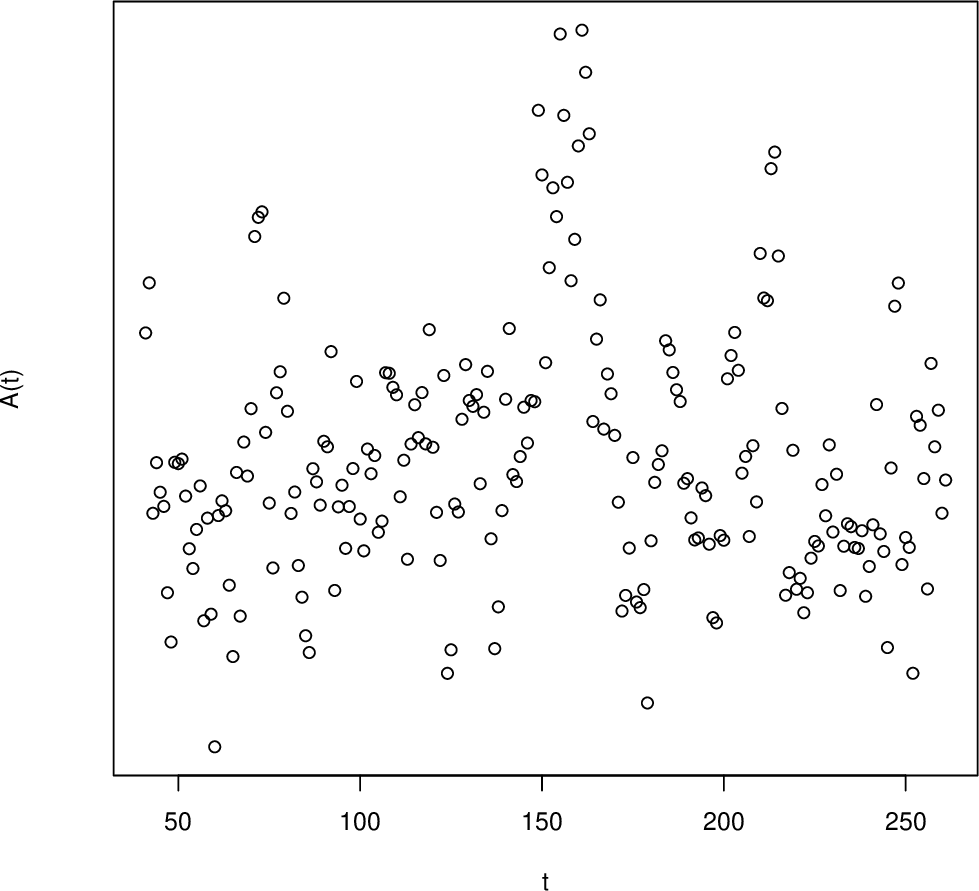}
	\end{minipage}
	\hfill
	\begin{minipage}[b]{0.48\textwidth}
		\includegraphics[width=\textwidth]{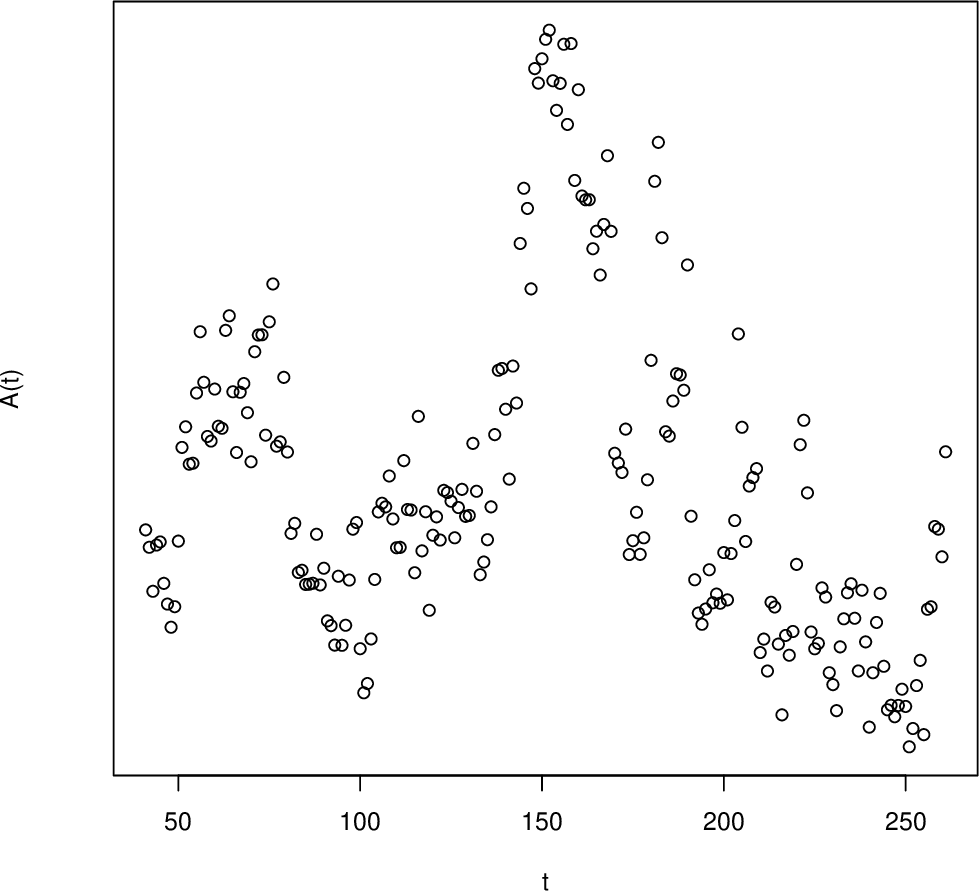}
	\end{minipage}
	\caption{Plots of test statstics $A(t)$ computed on synthetically generated data without (left) and with a single change-point at $t=150$ (right). The ticks and their corresponding values are intentionally hidden for the vertical axis, since we aim to draw a conclusion about a brake based only on a single time series, not a collection of them. Clearly, the choice of a threshold is not obvious. }
	\label{figgg}
\end{figure}

We consider the following setup. Let $X_1, ...~X_N~\in~\R^p$ denote sample of independent zero-mean vectors  (the on-line setting is discussed in Section \ref{online}) and we want to test a hypothesis 
\begin{equation}
\begin{split}
\Hnull &\coloneqq \{\forall i : \IVar{X_i} = \IVar{X_{i+1}}   \} 
\end{split}
\end{equation}
versus an alternative suggesting the existence of a break:
\begin{equation}\begin{split}
\Halt  \coloneqq   \left\{ \exists \tau :  \IVar{X_1} \right. & \left. = \IVar{X_2} = ... = \IVar{X_{\tau}} \right. \\ & \neq  \left. \IVar{X_{\tau+1}} = ... = \IVar{X_N} \right\}\\
\end{split}
\end{equation}
and localize the change-point $\tau$ as precisely as possible or (in on-line setting) to detect a break as soon as possible.  

The approach proposed in the paper focuses on applications in neuroimaging. Independence of the vectors $X_i$ is fulfilled only approximately in practice but functional connectivity network analysis, which assumes temporal independence, has been proven to be very successful and validated \cite{Poldrack2011}.

In the current study it is also assumed that some subset of indices $\Istable \subseteq 1..N$ of size $s$ (possibly, $s=N$) is chosen. The threshold is chosen relying on the sub-sample $\{X_i\}_{i \in \Istable}$ while the test-statistic is computed based on the whole sample.

To this end we define a family of test statistics in Section \ref{apprsecStat} which is followed by Section \ref{apprsecBoot} describing a data-driven (bootstrap) calibration scheme and Section \ref{locsec} describing change-point localization procedure. The theoretical part of the paper justifies the proposed procedure in a high-dimensional setting. The result justifying the validity of the proposed calibration scheme is stated in Section \ref{mainressec}. Section \ref{precsensitivitysec} is devoted to the sensitivity result yielding a bound for the window size $n$ necessary to reliably detect a break of a given extent and hence bounding the uncertainty of the change-point localization (or the delay of detection in online setting). The theoretical study is supported by a comparative simulation study (described in Section \ref{simsec}) demonstrating conservativeness of the proposed test and higher sensitivity compared to the other algorithms and by analysis of real-world datasets (Section \ref{real}). Appendix \ref{sensitivityPrecisionMatrixResult} contains a finite-sample version of sensitivity result along with the proofs. Appendix \ref{mainProof} provides a a finite-sample version of bootstrap sensitivity result which is followed by the proofs. Finally, Appendix \ref{knownresults} lists results which were essential for our theoretical study.

\section{Proposed approach} \label{apprsec} This section describes the proposed approach along with a data-driven calibration scheme.
Informally the proposed statistic can be described as follows. Provided that the break may happen only at moment $t$, one could estimate some parameter of the distribution using $n$ data-points to the left of $t$, estimate it again using $n$ data-points to the right and use the norm of their difference as a test-statistic $A_n(t)$. Yet, in practice one does not usually possess such knowledge, therefore we propose to maximize these statistics over all possible locations $t$ yielding $A_n$. Finally, in order to attain a trade-off between break detection sensitivity and change-point localization accuracy we build a multiscale approach: consider a family of test statistics $\{A_n\}_{n \in \n}$ for multiple window sizes $n \in \n \subset \Nat$ at once.

\subsection{Definition of the test statistic} \label{apprsecStat}

Now we present a formal definition of the test statistic.
In order to detect a break we consider a set of window sizes $\n \subset \mathbb{N}$. Denote the size of the widest window as $\nmax$ and of the narrowest as $\nmin$. Given a sample of length $N$, for each window size $n \in \n$ define a set of central points $\Tn \coloneqq \{n+1,..., N-n+1\}$.  Next, for all $n \in \n$ define a set of indices which belong to the window on the left side of the central point $t \in \Tn$ as $\Il \coloneqq \{t-n, ... , t-1\}$ and correspondingly for the window on the right side define $\Ir \coloneqq \{t, ... , t+n-1\}$.
Denote the sum of numbers of central points for all window sizes $n \in \n$ as

\begin{equation}
T \coloneqq \sum_{n \in \n} \abs{\Tn}.
\end{equation}
For each window size $n \in \n$, each central point $t \in \Tn$ and each side $\s \in \{l,r\}$ we define a de-sparsified estimator of precision matrix \cite{sara} \cite{janamb} as 

\begin{equation}\label{Tdefinition}
\begin{split}
\hat T^{\s}_n(t)  &\coloneqq \hatTheta^{\s}_n(t) + \hatTheta^{\s}_n(t)^T- \hatTheta^{\s}_n(t)^T\hatSigma^{\s}_n(t)\hatTheta^{\s}_n(t) 
\end{split}\end{equation}
where

\begin{equation}
\hatSigma^\s_n(t) = \frac{1}{n} \sum_{i \in \Is} X_i X_i^T 
\end{equation}
and $\hatTheta^\s_n(t)$ is a consistent estimator of precision matrix which can be obtained by graphical lasso \cite{ravikumar2011} or node-wise procedure \cite{janamb} (see \ref{consdef} and Appendix \ref{glassosec} for details). Note, the symmetricity of $\hatTheta^\s_n(t)$ is not required, yet $\hat T^{\s}_n(t)$ is symmetric by construction.

Now define a matrix of size $p \times p$ with elements

\begin{equation}
\label{zijk}
\Ziuv := \trueTheta_u X_i\trueTheta_vX_i-\trueTheta_{uv}
\end{equation}
where $\trueTheta \coloneqq \inv{\E{X_i X_i^T}}$ for $i \le \tau$, $\trueTheta_u$ stands for the $u$-th row of $\trueTheta$. Denote their variances as  $\sigma^2_{uv} \coloneqq \Var{Z_{1, uv}}$ and introduce the diagonal matrix $S = diag(\sigma_{1,1},\sigma_{1,2}, ... , \sigma_{p,p-1}, \sigma_{p,p} )$.
Denote a consistent estimator (see \ref{consdef} for details) of the precision matrix $\trueTheta$ obtained based on the sub-sample $\{X_i\}_{i \in \Istable}$ as  $\bootTheta$ .
In practice, the variances $\sigma^2_{uv}$ are unknown, but under normality assumption one can plug in $\hat\sigma_{uv}^2 \coloneqq \hatTheta_{uu} \hatTheta_{vv} + \hatTheta_{uv}^2$ which have been proven to be  consistent (uniformly for all $u$ and $v$) estimators of $\sigma^2_{uv}$ \cite{sara} \cite{mypaper}. If the node-wise procedure is employed, the uniform consistency of an empirical estimate of $\sigma^2_{uv}$ has been shown under some mild assumptions (not including normality) \cite{janamb}.

For each window size $n \in \n$  and a central point $t \in \Tn$ we define a statistic

\begin{equation}  \label{Adef}
\begin{split}
A_n(t) &\coloneqq \infnorm{\sqrt{\frac{n}{2}}\inv{S}\overline{(\hat T^l_n(t) - \hat T^r_n (t))}} 
\end{split}
\end{equation}
where we write $\overline{M}$ for a vector composed of stacked columns of matrix $M$.
Finally we define our family of test statistics for all $n \in \n$ as 

\begin{equation}
A_n = \max_{\substack{t \in \Tn}} A_n(t).
\end{equation}
Our approach heavily relies on the following expansion under $\Hnull$

\begin{equation} \label{Texpantion}
\sqrt{n} (\hat T^{\s}_n(t) - \trueTheta) = \frac{1}{\sqrt{n}}\sum_{i \in \Is} Z_i + {r_n^{\s}(t)\sqrt{n}},
\end{equation}
where the residual term 
\begin{equation}
r^{\s}_n(t) \coloneqq \hat T^{\s}_n(t) - \left( \trueTheta - \trueTheta \left(\hatSigma^\s_n(t) - \trueSigma\right) \trueTheta\right)
\end{equation}
can be controlled under mild assumptions \cite{sara} \cite{janamb} \cite{mypaper} as

\begin{equation}\label{key}
\max_{\substack{\s \in \{l,r\} \\ n \in \n, t \in \Tn}} \infnorm{r^{\s}_n(t)} = O_P\left(\frac{d \log p}{n}\right).
\end{equation}

The main reason why we prefer to use de-sparsified estimators $\hat T_n^\s(t)$  over using $\ell_1$-penalized estimators $\hatTheta_n^\s(t)$ is that the former allows for the expansion \eqref{Texpantion} which pitches an idea behind the bootstrap procedure we suggest and makes the theoretical analysis possible. 

This expansion might have been used in order to investigate the asymptotic properties of $A_n$ and obtain the threshold, however we propose a data-driven scheme.

\begin{remark} \label{remarkOnCovStat}
	A different test statistic can be defined as the maximum distance between elements of empirical covariance matrices $\hatSigma^l_n(t)$ and $\hatSigma^r_n(t)$. Such a method would be computationally less burdensome, since it does not involve precision matrix estimation. However, there are extremely effective implementations of the latter which makes the computational costs of calibration procedure dominate. In turn, the calibration complexity is the same for both of the approaches, since matrix inversion is not involved therein (see Section \ref{apprsecBoot}). Therefore, the computational gain would be negligible. Furthermore, application to neuroimaging motivates the search for a structural change in a functional connectivity network which is encoded by the structure of the corresponding precision matrix. Clearly, a change in the precision matrix also means a change in the covariance matrix, though we believe that the definition \eqref{Adef} increases the sensitivity to this kind of alternative (see \ref{breakextentremark} for more details).  
\end{remark}

\begin{remark}
	The estimator $\hat T_n^\s(t)$ is indeed a de-biased estimator. We can easily rearrange the definition \eqref{Tdefinition} as 
	
	\begin{equation}
	\hat T^{\s}_n(t) = \hatTheta^{\s}_n(t) - \hatTheta^{\s}_n(t)^T \left(  \hatSigma^{\s}_n(t)\hatTheta^{\s}_n(t)  - I\right)
	\end{equation}
	in order to represent it as a difference of $\ell_1$-penalized estimator and bias-correcting term.
\end{remark}

\subsection{Bootstrap calibration} \label{apprsecBoot}

Our approach rejects $\Hnull$ in favor of $\Halt$ if at least one of statistics $A_n$ exceeds the corresponding threshold $\xalphan$ or formally if $\exists n \in \n : A_n	 > \xalphan$.

In order to properly choose the thresholds, we define bootstrap statistics $\Anb$ in the following non-standard way. Note, that we cannot use an ordinary scheme with replacement or weighted bootstrap since in a high-dimensional case ($\abs{\Istable} \le p)$ the covariance matrix of bootstrap distribution would be singular which would made inverse covariance matrix estimation procedures meaningless.

First, draw with replacement a sequence $\{\varkappa_i\}_{i=1}^N$ of indices from $\Istable$ and denote $$\Xb_{_i} = X_{\varkappa_i} - \empE{X_j}$$ where $\empE{\cdot}$ stands for averaging over values of index belonging to $\Istable$ e.g., $\empE{X_j} = \frac{1}{\abs{\Istable}}\sum_{j\in \Istable} X_j$.   
Denote the measure $\Xb_i$ are distributed with respect to as $\mathbb{P}^\flat$.
In accordance with \eqref{zijk} define 

\begin{equation}
\Zbiuv := \bootTheta_u \Xb_i\bootTheta_v\Xb_i-\bootTheta_{uv}
\end{equation}
and for technical purposes define

\begin{equation}
\hat Z_{i,uv} := \bootTheta_u X_i\bootTheta_v X_i-\bootTheta_{uv}.
\end{equation}
Now for all central point $t$ define a bootstrap counterpart of $A_n(t)$

\begin{equation} \label{bootAnDef}
\Ab_n(t) \coloneqq \infnorm{\frac{1}{\sqrt{2n}}\inv{S} \overline{\left( \sum_{i \in \Il} \Zbi  - \sum_{i \in \Ir} \Zbi \right)}}
\end{equation}
which is intuitively reasonable due to expansion \eqref{Texpantion}.
And finally we define the bootstrap counterpart of $\An$ as 

\begin{equation}
\Anb = \max_{\substack{t \in \Tn}}  \Anb(t).
\end{equation}
Now for each given $\x \in [0,1]$ we can define quantile functions $\zb_n(\x)$ such that
\begin{equation}\label{tailfunctionDEF}
\zb_n(\x) \coloneqq \inf \left\{z :  \Probboot{\Anb > z} \le\x\right\} .
\end{equation}
Next for a given significance level $\alpha$ we apply multiplicity correction choosing $\alphacorrected$ as 
\begin{equation}\label{alphastardef}
\alphacorrected \coloneqq \sup \left\{\x : \Probboot{\exists n \in \n : \Anb > \zb_n(\x)}  \le \alpha\right\}
\end{equation}
and finally choose thresholds as $\xalphan \coloneqq \zb_n(\alphacorrected)$. 

\begin{remark}
	In order to detect multiple breaks we suggest to repeat the calibration procedure after each detected break using a portion of data acquired after the break. 
\end{remark}

\begin{remark}
	One can choose $\Istable = 1,2, ..., N$ and use the whole given sample for calibration as well as for detection. In fact, it would improve the bounds in \ref{main} and \ref{thSensPrec}, since it effectively means $s=N$. However, in practise such a decision might lead to reduction of sensitivity due to overestimation of the thresholds.
\end{remark}

\subsection{Change-point localization} \label{locsec}

In order to localize a change-point we have to assume that $\Istable \subseteq 1..\tau$. Consider the narrowest window detecting a change-point as $\hat n$:
\begin{equation}\label{narrowestdef}
\hat n \coloneqq \min \left\{n \in \n : \An > \xalphan \right\}
\end{equation}
and the central point where this window detects a break for the first time as 
\begin{equation}\label{tauhatDef}
\hat \tau \coloneqq \min \left\{ t \in \mathbb{T}_{\hat n} : A_{\hat n}(t) > x^\flat_{\hat n}(\alpha) \right\}.
\end{equation}
By construction of the family of test statistics we conclude (up to the confidence level $\alpha$) that the change-point $\tau$ is localized in the interval

\begin{equation}
\left[\hat \tau - \hat n ; \hat \tau + \hat n - 1\right].
\end{equation}
Clearly, if a non-multiscale version of the approach is employed, i.e. $\abs{\n} = \{n\}$,  $n = \hat n$ and the precision of localization (delay of the detection in online setting) equals $n$.

\section{Bootstrap validity} \label{mainressec}
This section states and discusses the theoretical result demonstrating the validity of the proposed bootstrap scheme i.e. 
\begin{equation}
\Prob{\forall n \in \n : \An \le \xalphan} \approx 1-\alpha .
\end{equation} 

Our theoretical results require the tails of the underlying distributions to be light. Specifically,
we impose Sub-Gaussianity vector condition.
 \begin{assumption}[Sub-Gaussianity vector condition]
 	\label{subGaussianVector}
 	\begin{equation}
 	\exists L : \forall i \in 1..N \sup_{\substack{a \in \R^p \\ \normtwo{a} \le 1 }}\E{\exp{\left(\left(\frac{a^T	X_i}{L}\right)^2\right)}} \le 2.
 	\end{equation}
 \end{assumption} 

Naturally, in order to establish a theoretical result we have to assume that a method featuring theoretical guaranties was used for estimating the precision matrices. Such methods include graphical lasso \cite{ravikumar2011}, adaptive graphical lasso \cite{zou2006} and  thresholded de-sparsified estimator based on node-wise procedure \cite{janamb}. These approaches overcome  the high dimensionality of the problem by imposing a sparsity assumption, specifically bounding the maximum number of non-zero elements in a row: $d \coloneqq \max_i\abs{\{j | \trueTheta_{ij} \neq 0 \}}$. These approaches are guaranteed to yield a root-$n$ consistent estimate revealing the sparsity pattern of the precision matrix \cite{ravikumar2011, mypaper, janamb} or formally

\begin{definition} \label{consdef}
	Consider an i.i.d. sample $x_1, x_2, ... x_n \in \R^p$. Denote their precision matrix as $\trueTheta = \inv{\E{X_1X_1^T}}$. Let $p$ and $d$ grow with $n$. A positive-definite matrix $\hatTheta^n$ is a consistent estimator of the high-dimensional precision matrix if 
	\begin{equation}
	\infnorm{\trueTheta - \hatTheta^n}  = O_p\left(\sqrt{\frac{\log p}{n}}\right)
	\end{equation}
	and 
	\begin{equation}
	\forall i,j \in 1..p \text{ and } \trueTheta_{ij}=0 \Rightarrow \hatTheta^{n}_{ij}=0.
	\end{equation}
\end{definition}

Graphical lasso and its adaptive versions impose an assumption, common for $\ell_1$-penalized approaches. 

\begin{assumption}[Irrepresentability condition]\label{irrepass}
	Denote an active set 
	\begin{equation}
	\Sa \coloneqq \left\{(i,j) \in 1..p \times 1..p : \trueTheta_{ij} \neq 0 \right\}
	\end{equation}
	and define a $p^2 \times p^2$ matrix $\Gamma^*\coloneqq \trueTheta \otimes \trueTheta$ where $\otimes$ denotes Kronecker product. Irrepresentability condition holds if there exists $\psi \in (0,1]$ such that 
	\begin{equation}
	\max_{e \notin \Sa} \onenorm{\Gamma^*_{e\Sa} \inv{(\Gamma^*_{\Sa\Sa})} } \le 1-\psi.
	\end{equation}
	
\end{assumption}

The interpretation of irrepresentability condition  under normality assumption is given in \cite{sara} \cite{ravikumar2011}. Particularly, \ref{irrepass} requires low correlation between the elements of empirical covariance matrix from the active set $\Sa$ and from its complement. The higher the constant $\psi$ is, the stricter upper bound is assumed.

While the \ref{irrepass} allows for the recovery of the active set, the test statistic does not explicitly require it, since it is based on the de-sparsified estimators. But this assumption is still essential for demonstrating consistency of estimation of the non-zero elements $\trueTheta_\Sa$ by either graphical lasso or its adaptive versions. Alternatively, one can use thresholded de-sparsified estimator, for which the theoretical guaranties can be established in the absence of \ref{irrepass}.

These observations give rise to the two following assumptions.
 
 \edef\oldassumption{\the\numexpr\value{assumption}+1}
 
 \setcounter{assumption}{0}
 \renewcommand{\theassumption}{Assumption 3.\oldassumption.\Alph{assumption}}
\begin{assumption} \label{thetaHatConsistency}
	Suppose, either graphical lasso or its adaptive version was used with regularization parameter $\lambda_n \asymp \sqrt{{\log p}/{n}}$ and also impose \ref{irrepass}.
\end{assumption}

\begin{assumption}\label{thetaHatConsistency1}
	Suppose, thresholded de-sparsified estimator based on node-wise procedure was used with regularization parameter $\lambda_n \asymp \sqrt{{\log p}/{n}}$.
\end{assumption}

\renewcommand{\theassumption}{Assumption \thesection.\arabic{assumption}}

Now we are ready to establish a result which guarantees that the suggested bootstrap procedure yields proper thresholds.

\begin{theorem}\label{main}
	Assume $\Hnull$ holds and furthermore, let $X_1, X_2,...X_N \in \R^p$ be i.i.d. Let \ref{subGaussianVector} and either \ref{thetaHatConsistency} or \ref{thetaHatConsistency1} hold. 
	Also assume, the spectrum of $\trueTheta$ is bounded. Allow the maximal number $d$ of non-zero elements in a row of the matrix  $\trueTheta$, the size $s$ of the set $\Istable$, the dimensionality $p$, the number $\abs{\n}$ of window sizes being considered, the maximum and minimum window sizes $\nmax$ and $\nmin$ grow with the sample size $N$. Further let $N > 2\nmax$,  $\nmax \ge \nmin$ and also impose the sparsity assumption
	\begin{equation}
	d = o \left(\frac{\sqrt[4]{\min \left\{s, \nmin^2\right\}}}{\abs{\n}^3 \log^{10}(pN)}\right).
	\end{equation}
	Then
	
	\begin{equation}
	\begin{split}
	\biggr\rvert\Prob{\forall n \in \n : \An \le \xalphan} &- (1-\alpha)\biggr\rvert
	= o_P(1).
	\end{split}
	\end{equation}
\end{theorem}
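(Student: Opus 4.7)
The plan is to bracket both $\max_{n\in\n}\An$ and $\max_{n\in\n}\Anb$ between sup-norms of sums of independent vectors, apply a Chernozhukov--Chetverikov--Kato (CCK) max-type Gaussian approximation to each, compare the two Gaussian maxima via covariance closeness, and read off the quantile statement by anti-concentration.

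First, the expansion \eqref{Texpantion} yields
$$\sqrt{n/2}\,S^{-1}\overline{\hat T^l_n(t) - \hat T^r_n(t)} = \frac{1}{\sqrt{2n}}\,S^{-1}\overline{\sum_{i\in\Il}Z_i - \sum_{i\in\Ir}Z_i} + \rho_n(t),$$
with $\infnorm{\rho_n(t)}\lesssim \sqrt{n}\,\max_\s\infnorm{r_n^\s(t)}\cdot\infnorm{S^{-1}}$. The residual bound \eqref{key} together with a union bound over $(n,t,\s)$ and the spectral hypothesis on $\trueTheta$ (which forces $\max_{u,v}\sigma_{uv}^{-1}$ to be bounded) gives $\sup_{n,t}\infnorm{\rho_n(t)} = O_P(d\log(pN)/\sqrt{\nmin})$, which the sparsity hypothesis forces to be $o_P((\log(p|\n|N))^{-1/2})$, hence absorbable by Gaussian anti-concentration. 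Let $\tilde\An(t)$ denote the resulting linearized statistic and $\tilde\An=\max_t\tilde\An(t)$.

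Second, $\max_{n\in\n}\tilde\An$ is the $\ell_\infty$ norm of a block-stacked sum of centered independent sub-exponential vectors of effective dimension $p^2\sum_{n\in\n}|\Tn|\le p^2|\n|N$: sub-exponentiality of the entries of $S^{-1}\overline{Z_i}$ follows from Assumption \ref{subGaussianVector} and the bounded spectrum of $\trueTheta$. CCK's max-type Gaussian approximation yields Kolmogorov closeness to the supremum of a Gaussian field $\mathcal G_n(t)$ with covariance matching that of $S^{-1}\overline{Z_i}$. For the bootstrap side, conditional on the data the centered vectors $\Zbi$ are i.i.d.\ under $\Pb$ with an empirical covariance $\hat V$; by the sup-norm consistency $\infnorm{\bootTheta-\trueTheta}=O_P(\sqrt{\log p/s})$ (Definition \ref{consdef}), the uniform consistency of $\hat\sigma_{uv}^2$, and the concentration of $\empE{XX^\top}$ around $\trueSigma$, one controls $\infnorm{\hat V - V}$ at a rate dictated by $s^{-1/2}$ and fourth-moment fluctuations on $\Istable$. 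Applying CCK conditionally yields an analogous Gaussian approximation for $\max_n\Anb$, and CCK's Gaussian comparison bound then delivers
$$\sup_z\bigl|\Probboot{\max_n\Anb\le z}-\Prob{\max_n\An\le z}\bigr|=o_P(1).$$

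Finally, the bootstrap quantile functions $\zb_n(\cdot)$ inherit this closeness, so $\alphacorrected$ defined by \eqref{alphastardef} matches its data-side analogue up to $o_P(1)$, yielding $\Prob{\forall n\in\n:\An\le\xalphan}=1-\alpha+o_P(1)$. The chief obstacle is tracking all three error sources — linearization residual, the two Gaussian approximations, and the Gaussian comparison — \emph{uniformly} over the multiscale family $\n$ and all central points $t\in\Tn$: each CCK reduction contributes polylog factors in the effective dimension $p^2|\n|N$ together with multiplicative $|\n|$-losses that accumulate across the three reductions. This is precisely what forces the $|\n|^3\log^{10}(pN)$ denominator in the sparsity assumption, with $\min\{s,\nmin^2\}$ in the numerator arising from the bootstrap covariance being estimated from $s$ samples while the residual bound is governed by $\nmin$.
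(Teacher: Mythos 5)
The high-level architecture matches the paper's (see its ``Proof discussion'': linearize via \eqref{Texpantion}, apply a Chernozhukov--Chetverikov--Kato Gaussian approximation to $\An$ and conditionally to $\Anb$, compare the two Gaussian covariances, then pass to quantiles). However, there is a genuine gap in your last step, and a smaller one in the Gaussian comparison.

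\textbf{The multivariate comparison.} You state the distributional closeness as
$\sup_z\bigl|\Probboot{\max_n\Anb\le z}-\Prob{\max_n\An\le z}\bigr|=o_P(1)$,
i.e.\ a one-dimensional comparison of the distributions of the maxima with a \emph{single} threshold $z$. The procedure, however, uses \emph{per-window} thresholds $\xalphan=\zb_n(\alphacorrected)$ which differ across $n$, and the multiplicity correction \eqref{alphastardef} depends on the \emph{joint} law of $(\Anb)_{n\in\n}$. What is actually needed --- and what the paper proves in \ref{precTVlemma} --- is the $|\n|$-dimensional comparison
$\sup_{\{x_n\}_{n\in\n}\subset\R}\bigl|\Prob{\forall n: \An\le x_n}-\Probboot{\forall n: \Anb\le x_n}\bigr|$.
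Fortunately this is a minor repair (the CCK approximation over hyperrectangles delivers exactly this), but as written your claim is too weak to feed into the next step.

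\textbf{The passage from distributional closeness to coverage.} This is where the real difficulty lies and where your argument is essentially hand-waved: ``the bootstrap quantile functions inherit this closeness.'' The thresholds $\xalphan$ are \emph{random}: they depend on the data through $\bootTheta$ and the empirical bootstrap measure, and $\alphacorrected$ is itself defined as a data-dependent supremum. You cannot simply substitute a random $z$ into a uniform bound $\sup_z|\cdot|=o_P(1)$ and conclude that $\Prob{\forall n:\An\le\xalphan}\approx 1-\alpha$, because the event $\{\forall n:\An\le\xalphan\}$ couples the test statistics and the thresholds. The paper devotes \ref{sandwitch} to exactly this: one brackets the random level set $\Zcb$ of the bootstrap c.d.f.\ between \emph{deterministic} level sets $\bZm(\delta)$, $\bZp(\delta)$ of the Gaussian reference c.d.f.\ (this uses both Gaussian approximation residuals $\Ra,\Rboot$ and the TV-type bound $R$ from \eqref{tv}), and then uses a Taylor argument together with \ref{l21} to control how much perturbing $\alphacorrected$ by $2\delta$ can shift the quantile vector. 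Absent this sandwiching, your step from ``the two c.d.f.s are uniformly close'' to ``coverage $\approx 1-\alpha$'' is not justified, and this is precisely the step the paper flags as the nontrivial part (``The theoretical treatment is complicated by the randomness of $\xalphan$''). To close the gap you need an explicit argument of this kind, or an equivalent one handling random thresholds.

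As a minor remark, the paper first establishes a finite-sample bound (\ref{mainFS}) with all residual terms explicit, and then takes the asymptotic limit; your proposal argues directly at the $o_P(1)$ level, which is fine for the statement but obscures where the $|\n|^3\log^{10}(pN)$ bookkeeping actually arises.
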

The finite-sample version of this result, namely, \ref{mainFS}, is given in Appendix \ref{mainProof} along with the proofs.

\paragraph{Bootstrap validity result discussion}
\ref{main} guarantees under mild assumptions (\ref{irrepass} seems to be the most restrictive one, yet it may be dropped if the node-wise procedure is employed) that the first-type error rate meets the nominal level $\alpha$ if the narrowest window size $\nmin$ and the set $\Istable$ are large enough. Clearly, the dependence on dimensionality $p$ is logarithmic which establishes applicability of the approach in a high-dimensional setting.
It is worth noticing that, unusually, the sparsity bound gets stricter with $N$ but the dependence is only logarithmic. Indeed, we gain nothing from longer samples, since we use only $2n$ data points each time.

\paragraph{On-line setting}\label{online}
As one can easily see, the theoretical result is stated in off-line setting, when the whole sample of size $N$ is acquired in advance. In on-line setting we suggest to control the probability $\alpha$  to raise a false alarm for at least one central point $t$ among $N$ data points (which differs from  the classical techniques controlling the mean distance between false alarms \cite{aries2007optimal}). Having $\alpha$ and $N$ chosen one should acquire $s$ data-points (the set $\Istable$), use the proposed bootstrap scheme with bootstrap samples of length $N$ in order to obtain the thresholds. Next the approach can be naturally applied in on-line setting and \ref{main} guarantees the capability of the proposed bootstrap scheme to control the aforementioned probability to raise a false alarm.

\paragraph{Proof discussion}
The proof of the bootstrap validity result, presented in Appendix \ref{mainProof}, mostly relies on the high-dimensional central limit theorems obtained in \cite{Chernozhukov2014}, \cite{chernozhukov2013}. These papers also present bootstrap justification results, yet do not include a comprehensive bootstrap validity result. The theoretical treatment is complicated by the randomness of $\xalphan$. We overcome it by applying the so-called ``sandwiching'' proof technique (see \ref{sandwitch}), initially used in \cite{SpokWillrich}. The high-level structure of the proof can be summarized as follows:

\begin{enumerate}
	\item Approximate statistics $\An$ by norms of sub-vectors $\eta_n$ of a high-dimensional Gaussian vector $\eta$ up to the residual $\Ra$ using the high dimensional central limit theorem by \cite{Chernozhukov2014}. 
	\item Similarly, approximate bootstrap counterparts $\Anb$ of the statistics by norms of sub-vectors $\zeta_n$ of a high-dimensional Gaussian vector $\zeta$ up to the residual $\Rboot$.
	\item Prove that the covariance matrix $\Var{\zeta}$ is concentrated in the ball of radius $\deltaY$ centered at its real-world counterpart $\Var{\eta}$.
	\item By employing the Gaussian comparison result provided by \cite{Chernozhukov2014} and \cite{chernozhukov2013} obtain similarity of joint distributions of the norms of $\zeta$ and $\eta$, which in combination with the steps 1 and 2 yields similarity of joint distributions of $\An$ and $\Anb$.
	\item Finally, obtain the bootstrap validity result using the sandwiching \ref{sandwitch}.
\end{enumerate}

The rigorous treatment of steps 1 through 4 is presented in Sections \ref{GAprA}, \ref{GaprAB}, \ref{twomatsec} and \ref{AAbSim} respectively, while step 5 is formalized in Sections \ref{mainProof} and \ref{appa}.

\section{Sensitivity and Consistency results} \label{precsensitivitysec} 
Consider the following setting. Let there be index $\tau$, such that $\{X_i\}_{i \le \tau}$ are i.i.d. and $\{X_i\}_{i > \tau}$ are i.i.d. as well. Denote precision matrices $\inv{\Theta_1} \coloneqq \E{X_1X_1^T}$ and $\inv{\Theta_2} \coloneqq \E{X_{\tau+1}X_{\tau+1}^T}$.   Define the break extent $\Delta$ as  
\begin{equation} \label{breakdef}
\Delta \coloneqq \infnorm{{\Theta_1 - \Theta_2}}.
\end{equation}
The question is, how large the window size $n$ should be in order to reliably reject $\Hnull$ and how firmly can we localize the change-point. 

\begin{theorem} \label{thSensPrec}
	Let \ref{subGaussianVector} and either \ref{thetaHatConsistency} or \ref{thetaHatConsistency1} hold. 
	Also assume, the spectrums of $\Theta_1$ and $\Theta_2$ are bounded. Allow the maximal number $d$ of non-zero elements in a row of the matrix  $\trueTheta$, the size $s$ of the set $\Istable$, the dimensionality $p$, the number $\abs{\n}$ of window sizes being considered, the maximum and minimum window sizes $\nmax$ and $\nmin$ grow with the sample size $N$ and let the break extent $\Delta$ decay with $N$. Further let $N > 2\nmax$,  $\nmax \ge \nmin$, 
	\begin{equation}\label{nminbound}
	d = o \left(\frac{\sqrt{\max \{ s,\nmin\}}}{\abs{\n}\log^{7}(pN)}\right)
	\end{equation}
	and there exists $n^* \in \n$ such that 
	\begin{equation}\label{nsuffbound}
	\frac{\log^2(pN)  }{n^* \Delta} = o(1).
	\end{equation}
	Then $\Hnull$ will be rejected with probability approaching $1$.
\end{theorem}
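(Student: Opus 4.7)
The plan is to exhibit, under $\Halt$, a single central point $t^*$ at which $A_{n^*}(t^*)$ for some admissible $n^* \in \n$ satisfying \eqref{nsuffbound} exceeds its calibrated threshold $\xalphannn{n^*}{\alpha}$. I would take $t^* = \tau + 1$ (up to trivial boundary adjustments if $\tau$ sits within $n^*$ of an endpoint), so that the left window $\Il$ lies entirely in the pre-change regime (precision $\Theta_1$) and $\Ir$ entirely in the post-change regime (precision $\Theta_2$).

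With this choice I apply the expansion \eqref{Texpantion} separately on each side, once with $\trueTheta=\Theta_1$ and once with $\trueTheta=\Theta_2$, to obtain
\begin{equation*}
\hat T^l_{n^*}(t^*) - \hat T^r_{n^*}(t^*) = (\Theta_1 - \Theta_2) + \frac{1}{n^*}\sum_{i \in \Il}Z_i^{(1)} - \frac{1}{n^*}\sum_{i \in \Ir}Z_i^{(2)} + r^l_{n^*}(t^*) - r^r_{n^*}(t^*),
\end{equation*}
where $Z_i^{(j)}$ is the centered quadratic form \eqref{zijk} built from $\Theta_j$. The reverse triangle inequality then gives
\begin{equation*}
A_{n^*}(t^*) \geq \sqrt{n^*/2}\,\infnorm{\inv{S}\,\overline{\Theta_1 - \Theta_2}} - \text{(noise)} - \text{(residual)}.
\end{equation*}
Because the spectra of $\Theta_j$ are bounded, each $\sigma_{uv}$ is bounded away from zero and infinity, so the signal is at least $c\,\sqrt{n^*}\,\Delta$ for a constant $c>0$. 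A Bernstein-type maximal inequality for sub-exponential arrays --- or equivalently the Gaussian approximation of step~1 in the proof of \ref{main} applied at a fixed $t^*$ --- bounds the noise by $O_P(\sqrt{\log(pN)})$, while \eqref{key} bounds the residual by $O_P(d\log p/\sqrt{n^*})$, which the sparsity assumption \eqref{nminbound} makes negligible compared to the signal.

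What remains is an upper bound on the random threshold $\xalphannn{n^*}{\alpha}$. Here the bootstrap-validity machinery behind \ref{main} is reused almost verbatim: because the calibration sub-sample $\Istable$ lies in the stationary regime, the bootstrap covariance concentrates around its Gaussian-approximation counterpart, and the standard concentration bound for the supremum of a centered Gaussian vector of dimension $|\mathbb{T}_{n^*}|\,p^2$ forces $\xalphannn{n^*}{\alpha} = O_P(\sqrt{\log(pN)})$. Combined with the lower bound on $A_{n^*}(t^*)$, condition \eqref{nsuffbound} then yields $A_{n^*} \geq A_{n^*}(t^*) > \xalphannn{n^*}{\alpha}$ with probability tending to $1$.

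The main obstacle is precisely this control of the random threshold; it is not an elementary tail bound but requires re-running the Gaussian approximation and Gaussian comparison arguments outlined in steps~1--4 of the proof discussion of \ref{main} in order to translate the bootstrap quantile into a deterministic-order bound. A secondary technicality is legitimising the two-sided expansion \eqref{Texpantion} at $t^* = \tau+1$ --- each individual window is drawn from one stationary regime, so \eqref{key} still applies on each side separately, but this has to be recorded explicitly. Note also that the sparsity condition \eqref{nminbound} is slightly stronger than the one in \ref{main} because here the residual must be dominated by the vanishing signal $\sqrt{n^*}\Delta$ rather than by a constant.
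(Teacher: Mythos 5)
Your plan is essentially the paper's own argument: the paper proves a finite-sample result (Appendix A) by lower-bounding the statistic at a central point with one window per regime, bounding the residual via the $\infnorm{r^\s_n(t)} \le \Rt$ control on the set $\Tauarr$, and upper-bounding the bootstrap threshold $\xalphann{\nmax}$ by combining \ref{bootstrapGarLemma} (Gaussian approximation of $A^\flat_n$) with a Chernoff tail bound for the Gaussian maximum and the covariance-concentration result \ref{twomaticies}. The only cosmetic difference is that the paper works with the widest window $\nmax$ rather than the hypothesized $n^*$ (harmless, since the signal $\sqrt{n}\,\Delta$ grows with $n$ and $\nmax\ge n^*$), and your remark that \eqref{nminbound} is ``stronger'' than the sparsity bound in \ref{main} is actually reversed --- the bound in \ref{main} is the more restrictive one, as \ref{main} must control the bootstrap validity gap tightly while here the residual only needs to be dominated by the signal.
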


This result is a direct corollary of the finite-sample sensitivity result established and discussed in Appendix \ref{sensitivityPrecisionMatrixResult}.

The assumption $\Istable \subseteq 1..\tau$ is only technical. 
A similar result may be proven without relying on it by methodologically the same argument.   See \ref{dropass} for more details. 

Next we formulate a trivial corollary of \ref{thSensPrec} establishing consistency of the change-point estimator $\hat \tau $ defined by \eqref{tauhatDef}. 

\begin{corollary}\label{consistency}
	Let assumptions of \ref{thSensPrec} hold. Moreover, assume a suitable window is being considered 
	
	\begin{equation}\label{nsuffdef}
	n^* \coloneqq \left\lceil \left(\frac{\log^2(pN)  }{\Delta}\right)^\eta \right\rceil  \in \n, \text{ where } \eta > 1.
	\end{equation}
	then 
	\begin{equation}\label{key}
	\Prob{\abs{\tau - \hat \tau} \le n^*} \gtrsim
	1 - \alpha.
	\end{equation}
\end{corollary}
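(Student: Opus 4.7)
The plan is to combine the sensitivity guarantee of \ref{thSensPrec} with the bootstrap validity of \ref{main}, exploiting the construction of $\hat n$ and $\hat\tau$ in \eqref{narrowestdef}--\eqref{tauhatDef}. First I verify that the prescribed $n^*$ satisfies the hypothesis \eqref{nsuffbound} of \ref{thSensPrec}. Since the break extent decays with $N$, $\log^2(pN)/\Delta \to \infty$, so
\begin{equation*}
\frac{\log^2(pN)}{n^*\Delta} \le \left(\frac{\log^2(pN)}{\Delta}\right)^{1-\eta} = o(1), \qquad \eta>1.
\end{equation*}
\ref{thSensPrec} then delivers $A_{n^*}>x^\flat_{n^*}(\alpha)$ with probability $1-o(1)$, which by \eqref{narrowestdef} forces $\hat n\le n^*$ on the same event.

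Next I claim $\Prob{|\tau-\hat\tau|>\hat n}\le \alpha+o(1)$. On the complementary event $\{|\tau-\hat\tau|>\hat n\}$, both windows $\mathcal{I}^l_{\hat n}(\hat\tau)$ and $\mathcal{I}^r_{\hat n}(\hat\tau)$ lie entirely on one side of $\tau$, so the data entering $A_{\hat n}(\hat\tau)$ are i.i.d.\ with a single true precision matrix and the exceedance $A_{\hat n}(\hat\tau)>x^\flat_{\hat n}(\alpha)$ is a null-style false alarm. Repeating the Gaussian-approximation and Gaussian-comparison machinery of \ref{main} over the sub-max taken only over $(n,t)$ pairs whose windows avoid $\tau$ shows the probability of any such exceedance is at most $\alpha+o(1)$: since $\Istable\subseteq 1..\tau$, the bootstrap law reproduces the pre-break real-world covariance, and for windows lying to the right of $\tau$ the relevant covariance differs by an $O(\Delta)=o(1)$ perturbation that is absorbed in the residual $\deltaY$ appearing in step~4 of the proof outlined in Section \ref{mainressec}.

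Combining the two ingredients by a union bound,
\begin{equation*}
\Prob{|\tau-\hat\tau|\le n^*} \ge \Prob{|\tau-\hat\tau|\le \hat n,\ \hat n\le n^*} \ge 1-\alpha-o(1),
\end{equation*}
which is the asserted bound. The main obstacle is the second paragraph: the bootstrap thresholds are calibrated from pre-break data, yet we must rule out false alarms from windows sitting entirely on the post-break side of $\tau$. I expect the cleanest route to thread the $\Delta=o(1)$ covariance discrepancy through the Gaussian-comparison step so that the pre-break quantile $x^\flat_n(\alpha)$ still dominates the post-break null distribution up to $o(1)$; everything else is routine bookkeeping around \eqref{narrowestdef} and \eqref{tauhatDef}.
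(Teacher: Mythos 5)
Your plan is faithful to what the paper seems to intend: the corollary is stated by the authors as a ``trivial corollary'' of \ref{thSensPrec} with no explicit proof, and the localization claim in Section~\ref{locsec} that $\tau\in[\hat\tau-\hat n;\hat\tau+\hat n-1]$ is asserted merely ``by construction.'' Your first paragraph is correct: $n^*=\lceil(\log^2(pN)/\Delta)^\eta\rceil$ with $\eta>1$ gives $\log^2(pN)/(n^*\Delta)\le(\log^2(pN)/\Delta)^{1-\eta}=o(1)$ because $\Delta$ decays, so \eqref{nsuffbound} holds; applying the sensitivity argument at window $n^*$ yields $A_{n^*}>x^\flat_{n^*}(\alpha)$ and hence $\hat n\le n^*$ with probability $1-o(1)$. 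The union bound in your last display is also the right way to assemble the pieces.

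The gap you flag in your second paragraph is, however, genuine and is not closed by what you write. The bootstrap validity result \ref{main} is proved under $\Hnull$, so it cannot be invoked directly under the alternative to control the probability of an exceedance at a central point $t$ with both windows on one side of $\tau$. For pre-break $t$ your argument is essentially fine: $\Istable\subseteq 1..\tau$ and the thresholds are calibrated over the full family of $(n,t)$ pairs, so restricting the supremum only makes the exceedance probability smaller, and the pre-break $A_n(t)$ has the same law as under $\Hnull$. For post-break $t$ the issue is quantitatively harder than ``an $O(\Delta)$ perturbation absorbed in $\deltaY$'': the post-break statistic is formed with the \emph{fixed} normalizer $S$ computed from $\Theta_1$, while the variances $\sigma^2_{uv}$ of $Z_{i,uv}$ for $i>\tau$ are fourth-moment functionals of $\Theta_2$. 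One must verify that $\infnorm{\Theta_1-\Theta_2}=\Delta=o(1)$ implies an $o(1)$ perturbation of the corresponding $\Sigma_Y$-block and of the diagonal normalization $\sigma^{(2)}_{uv}/\sigma^{(1)}_{uv}$, using the bounded-spectrum and sparsity assumptions, before the Gaussian comparison of \ref{gaussianComparison} can absorb it. This is plausible under the stated hypotheses, but it is an additional estimate, not ``routine bookkeeping,'' and it is precisely the step the paper itself leaves implicit. There is also a secondary point you should state explicitly: \ref{thSensPrecFS} as written proves $A_{\nmax}>x^\flat_{\nmax}(\alpha)$, so deducing $A_{n^*}>x^\flat_{n^*}(\alpha)$ requires rerunning that argument with $n^*$ in the role of $\nmax$ (which works because \eqref{nsuffbound} is exactly what makes the assumption \eqref{nass} hold at $n^*$), rather than citing \ref{thSensPrec} verbatim.
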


\paragraph{Sensitivity and consistency results discussion}

Assumptions \eqref{nminbound} and \eqref{nsuffbound} are essentially a sparsity bound and a bound for the sufficient windows size $n^*$. Clearly,  they do not yield a particular value $n^*$ necessary to detect a break, since it depends on the underlying distributions. A more restrictive assumption \eqref{nsuffdef} suggests a particular choice of $n^*$ which is suitable in asymptotic setting. Note, the results include dimensionality $p$ only under the sign of logarithm, which guarantees high sensitivity of the test and proper localization of the change-point in high-dimensional setting.

\paragraph{Online setting}
\ref{thSensPrec} and \ref{consistency} are established in off-line setting. In on-line setting they guarantee that the proposed approach can reliably detect a break of an extent not less than $\Delta$ with a delay at most $n^*$.

\paragraph{Multiple change-point detection}
Consider a setting which allows for multiple breaks: let there be $n_b$ change-points $\{\tau_j\}_{j=1}^{n_b} \subset N$ and let $\{X_i\}_{i=\tau_{j}+1}^{\tau_{j+1}}$ be i.i.d for all $1 \le j \le n_b$. Also define $n_b+1$ precision matrices $\{\Theta_j\}_{j=0}^{n_b}$ -- one for each region between a pair of consecutive change-points. Define the minimal extent of the breaks as 

\begin{equation}
\Delta_{\min} = \min_{j} \infnorm{\Theta_j - \Theta_{j+1}}.
\end{equation}
Then under assumption that two change-points are not too close to each other 
\begin{equation} 
\min_j \abs{\tau_j - \tau_{j+1}}  \ge n^* + s
\end{equation}
we can apply \ref{thSensPrec} and conclude that the method detects all the change points with probability approaching $1$.

\begin{remark} \label{breakextentremark}
	The sensitivity and consistency results heavily depend on the break extent $\Delta$ defined by \eqref{breakdef}. As \ref{remarkOnCovStat} suggests, a test statistic based on the $\ell_\infty$ norms of the covariance matrices, not of their inverses, may also be considered. For such a statistic a similar result could be established, but the break extent would have to be correspondingly redefined as a $\ell_\infty$ distance between the covariance matrices instead of $\Theta_1$ and $\Theta_2$ which would unnecessarily restrict the application of this approach in the field of neuroimaging where the precision matrix is the main object of interest. 
\end{remark}

\section{Simulation study} \label{simsec}
\subsection{Design}\label{secdes}

In our simulation we test $$\Hnull = \left\{\{X_i\}^N_{i=1}  \sim \N{0}{I} \right\} $$ versus an alternative 

$$\Halt = \left\{ \exists  \tau : \{ X_i \}_{i=1}^{\tau} \sim \N{0}{I} \text{ and } \{ X_i \}_{i=\tau+1}^{N} \sim \N{0}{\Sigma_{1}} \right\}.$$
The alternative covariance matrix $\Sigma_1$ was generated in the following way. First we draw $k \sim Poiss(3)$. The matrix $\Sigma_1$ is composed as a block-diagonal matrix of $k$ matrices of size $2\times2$ with ones on their diagonals and their off-diagonal element drawn uniformly from $[-0.6;-0.3] \cup [0.3; 0.6]$ and an identity matrix of size $(p-2k) \times (p-2k)$. 
The dimensionality of the problem is chosen as $p=50$, the length of the sample $N=1000$ and we choose the set $\Istable = [1, 2 ,.. 100]$. The absence of positive effect of large sample size $N$ is discussed in Sections \ref{mainressec} and \ref{precsensitivitysec}. Moreover, in all the simulations under alternative the sample was generated with the change point in the middle: $\tau=N/2$ but the algorithm was oblivious about this as well as about either of the covariance matrices. The significance level $\alpha = 0.05$ was chosen.
In all the experiments graphical lasso with penalization parameter $\lambda_n = \sqrt{\frac{\log p}{n}}$ was used in order to obtain $\hatTheta^\s_n(t)$. In the same way, graphical lasso with  penalization parameter $\lambda_s$ was used in order to obtain $\hatTheta$. 

We assess the performance of the proposed approach in both on-line and off-line settings. Multiple break detection is left out of the scope because the suggested method attacks the problem repetitively detecting the breaks one-by-one in on-line fashion. In on-line setting the method is calibrated in order to raise a false alarm with probability $\alpha = 0.05$ on $N$ data points using for calibration the set $\{X_i\}_{i \in \Istable}$  which is known in advance (unlike the rest of the dataset).

\subsection{Experiment results}

We have also come up with an approach to the same problem not involving bootstrap. The paper \cite{twoCov} defines a high-dimensional two-sample test for equality of matrices. Moreover, the authors prove asymptotic normality of their statistic which makes computing p-value possible. 
We suggest to run this test for every $t \in \Tn$ and every $n \in \n$, adjust the obtained p-values using Holm method \cite{holm} and eventually compare them against $\alpha$. This example is considered in order to demonstrate inapplicability of a straightforward application of a two-sample test for change-point detection.

The paper \cite{Matteson2015} suggests an approach based on comparing characteristic functions of random variables. The critical values were chosen with permutation test as proposed by the authors. In our experiments the method was allowed to consider all the sample at once. The R-package ecp \cite{ecppackage} was used.

In \cite{Cho2015} a high-dimensional approach aiming to detect a change-point in second-order structure of the time series is suggested. In order to investigate its performance in our setting we use the implementation kindly provided by the authors of the paper. 

Our approach is implemented as an R-package covcp, which is available on GitHub\footnote{\url{https://github.com/akopich/covcp}}.

Below we report and discuss results for a particular alternative matrix $\Sigma_1$ generated in such a way suggested in Section \ref{secdes}. All the methods being considered exhibit comparable performance for other matrices $\Sigma_1$ drawn from the same distributions. 

The first type error rate and power for our approach  are reported in Table~\ref{table}. 
As one can see, our approach allows to properly control first type error rate in both off-line and on-line setting. In fact, the test is conservative and we believe this is caused by the $\le$ signs entering the definitions of $\zb_n(\cdot)$ \eqref{tailfunctionDEF} and the multiplicity correction \eqref{alphastardef}. As expected, the power of the test is higher for larger windows and it is decreased by adding narrower windows into consideration which is the price to be paid for better localization of a change point. The power of the test is rather similar in on-line and off-line settings which is due to its local nature.

In our study the approaches proposed in \cite{Matteson2015}, \cite{Cho2015} and the one based on the two sample test \cite{twoCov} turned out to be conservative, but neither of them exhibited power above $0.1$.

In order to justify application of multiscale approach (i. e. $\abs{\n} > 1$) for the sake of better change-point localization  in off-line setting we report the distribution of the narrowest detecting window $\hat n$ (defined by \eqref{narrowestdef}) over $\n$ in Figure \ref{fig:pecharts}. The Table~\ref{table} represents average precision of change-point localization for various choices of set of window sizes $\n$. One can see, that multiscale approach significantly improves the precision of localization. 

Similarly, in on-line setting we analyze {\it delay of detection} -- the average number of data points after the break which have to be considered before the method detects a break. As Table~\ref{table} shows, the delay of detection is significantly decreased by the use of multiscale approach which justifies its use in on-line setting as well.
%Bringing a narrow window with low chance of detection of the break on its own ($n=70$ in our experiment, which demonstrates the power of only $0.08$ if $\n=\{70\}$) into consideration may somewhat decrease precision of localization (or correspondingly increase delay of detection) which is the price of multiplicity correction. Generally, the window being included should contribute enough to the detection process in order to overwhelm the decrease in sensitivity incurred by the increased number of statistics. 

\begin{table}
	\centering
	\caption{		First type error rate and power of change-point localization of the proposed approach for various sets of window sizes $\n$. Localization precision and delay of detection are also reported for off-line and on-line settings respectively. 	}\label{table}
	\begin{tabular}{|c|c|c|c|c|c|c|} 
		\hline
		\multicolumn{1}{|l|}{} & \multicolumn{3}{c|}{Off-line}                                                                                                            & \multicolumn{3}{c|}{On-line}                                                                                                         \\ 
		\hline
		$\n$                   & \begin{tabular}[c]{@{}c@{}}I type \\error rate \end{tabular} & Power & \begin{tabular}[c]{@{}c@{}}Localization \\precision \end{tabular} & \begin{tabular}[c]{@{}c@{}}I type \\error rate \end{tabular} & Power & \begin{tabular}[c]{@{}c@{}}Delay of \\detection\end{tabular}  \\ 
		\hline
		\{70\}                  & 0.02                                                         & 0.09  & 70                                                                & 0.02                                                         & 0.08  & 46                                                            \\ 
		\hline
		\{100\}                 & 0.00                                                         & 0.37  & 100                                                               & 0.00                                                         & 0.37  &  97                                                           \\ 
		\hline
		\{140\}                 & 0.01                                                         & 0.81  & 140                                                               & 0.01                                                         & 0.81  &  127                                                           \\ 
		\hline
		\{200\} & 0.00   & 0.99  & 200  & 0.00   & 0.99  & 160      \\ 
		\hline
		\{140, 200\} & 0.00   & 0.99  & 153  & 0.00 & 0.99  & 140      \\ 
		\hline
		\{100, 140, 200\} & 0.00   & 0.98  & 146  & 0.00    & 0.99  & 134      \\ 
		%\hline
		%\{70, 100, 140, 200\} & 0.00   & 0.98  & 148  & 0.01    & 0.97  &  138      \\ 
		\hline
		\{70, 140\}             & 0.01                                                         & 0.76  & 135                                                               & 0.01                                                         & 0.76  & 124                                                          \\ 
		\hline
		\{100, 140\}            & 0.01                                                         & 0.75  & 124                                                               & 0.01                                                         & 0.75  &  118                                                           \\ 
		\hline
		\{70, 100, 140\}        & 0.01                                                         & 0.74  & 123                                                               & 0.01                                                         & 0.72  &  118                                                           \\
		\hline
	\end{tabular}
\end{table}

\begin{figure}
	\caption{{Pie charts representing distribution of narrowest detecting window $\hat n$ and the precision of localization in cases of $\abs{\n} = \{70, 140\}$, $\abs{\n} = \{100, 140\}$ and $\abs{\n} = \{70,100, 140\}$ respectively}}
	\label{fig:pecharts}
	\centering
	\includegraphics[width=\linewidth]{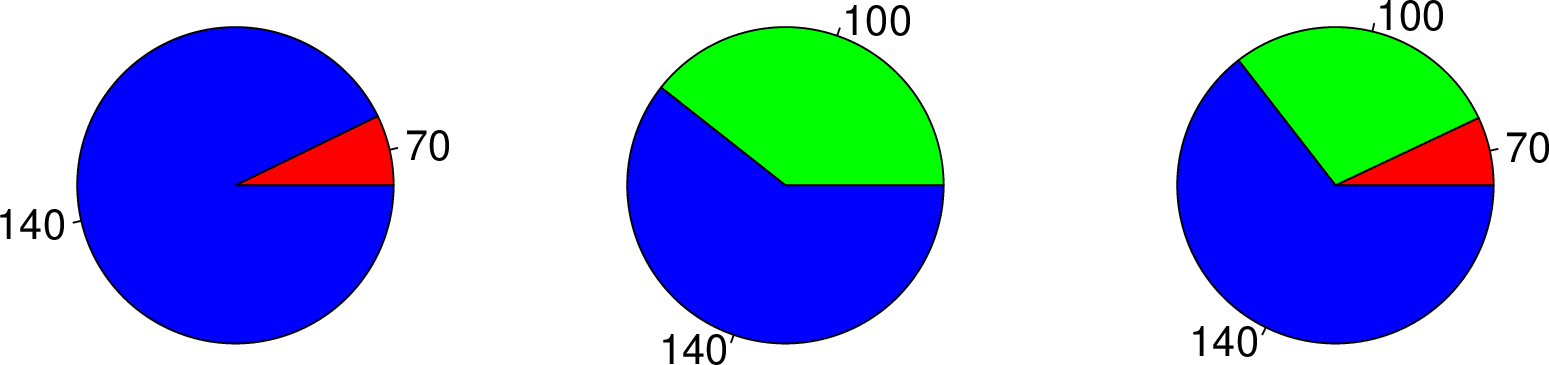}
\end{figure}

\section{Analysis of real-world datasets} \label{real}
The paper \cite{Puschmann} presents a functional magnetic resonance
imaging study on human subjects, who had to learn the relation between different auditory stimuli and a monetary reward while being scanned. Based on their recorded performance (e.g. success rate) those of them who have managed to learn the relation within the time course of the experiment are considered {\it learners}. The authors have investigated the activity of $4$ brain regions-of-interest (ROI) which are believed to be involved in solving problems like the suggested one. Among the learners $3$ of the ROIs exhibit a statistically significant change in the Blood Oxygenation Level Dependent (BOLD) response between the first and the last quarters of the experiment. 

In this paper we analyze the BOLD responses for $18$ subjects which were classified as learners. We have considered a finer-grained brain atlas with $p=256$ ROIs \cite{Finn2015}. For each subject and for each ROI a time-course of length $N=1680$ was acquired. Datasets for each subject were analyzed separately. We have applied the proposed approach to the residuals of linear modeling usual in fMRI experiments \cite{Poldrack2011}. The residuals are publicly available\footnote{\url{http://www.wias-berlin.de/preprint/2404/wias_preprints_2404.zip}}. We have used graphical lasso with the penalty parameter $\lambda_n = \sqrt{\frac{\log p}{n}}$, a single window size $n = 50$ was considered and the first $200$ data points were used for bootstrap simulations ($\Istable = \{1...200\}$). The proposed approach has detected a change-point in the covariance structure of residuals at significance level $\alpha = 0.05$ for each of the subjects.

\appendix

\section{Proof of sensitivity result} \label{sensitivityPrecisionMatrixResult}
\begin{proof}[Proof of \ref{thSensPrec}]
	Proof consists in applying of the finite-sample \ref{thSensPrecFS}. Its applicability is guaranteed by the consistency results given in papers \cite{ravikumar2011, mypaper, janamb} and by the results from \cite{sara, janamb, mypaper} bounding the term $\Rt$. High probability of set $\Tauarr$ is ensured by \ref{sigmaconcentration}.
\end{proof}
\begin{theorem} \label{thSensPrecFS} Let $\Istable \subseteq 1..\tau$.
	Let $\hatTheta$ denote a symmetric estimator of $\Theta_1$ s.t. for some $r \in \R$ it holds that 
	
	\begin{equation}
	\infnorm{\Theta_1 - \hatTheta} < r
	\end{equation}
	and $(\Theta_1)_{ij} = 0 \Rightarrow \hatTheta_{ij}=0$. 	
	Suppose \ref{subGaussianVector} holds and there exists $\Rt$ such that $\infnorm{r_{\nmax}^\s(t)} \le \Rt$ for all $\s\in \{l,r\}$ and $t \in \mathbb{T}_{\nmax}$ on some set 
	\begin{equation}
	\begin{split}
	\Tauarr &\coloneqq  \left\{ \forall t \le \tau - \nmax : \infnorm{\hatSigma_n^\s (t) - \trueSigma_1}\le \delta_{\nmax} \right\} \\&\bigcap\left\{ \forall t \ge \tau + \nmax : \infnorm{\hatSigma_n^\s (t) - \trueSigma_2}\le \delta_{\nmax} \right\}.
	\end{split}
	\end{equation} Moreover, let the residual $\Rboot$ defined in \ref{bootstrapGarLemma} be bounded:
	
	\begin{equation}\label{key}
	\Rboot  \le \frac{\alpha}{6\abs{\n}}.
	\end{equation} 
	Also let 
	\begin{equation}\label{nass}
	\sqrt{\frac{\nmax}{2}}\infnorm{{S}}\left(\Delta - 2\Rt\right) \ge \q,
	\end{equation}
	where 
	\begin{equation}\label{precqdef}
	\q \coloneqq \sqrt{2\left(1+\deltaY(r)\right) \log \left(\frac{2  N\abs{\n} p^2}{\alpha -3\abs{\n}\Rboot}\right)}
	\end{equation}
	and $\deltaY$ is defined in \ref{twomaticies}.
	Then on set $\Tauarr$ with probability at least
	\begin{equation}
	1-\probsigmay,
	\end{equation}
	where $\probsigmay$ is defined in \ref{twomaticies},
	$\Hnull$ will be rejected.
\end{theorem}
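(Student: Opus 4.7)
The plan is to exhibit one pair $(n^\star, t^\star)$ for which $A_{n^\star}(t^\star)$ exceeds the threshold $\xalphann{n^\star}$, since any such pair immediately triggers rejection of $\Hnull$. The natural choice is $n^\star = \nmax$ (largest signal-to-noise ratio) and any central point $t^\star \in \Tnmax$ whose left window $\{t^\star-\nmax,\dots,t^\star-1\}$ lies before the break and whose right window $\{t^\star,\dots,t^\star+\nmax-1\}$ lies after, e.g.\ $t^\star = \tau+1$ when $\tau + \nmax \le N$. Such a point exists on $\Tauarr$; furthermore, $\Istable \subseteq 1..\tau$ guarantees that $\hatTheta$ is a consistent estimate of $\Theta_1$, which is what enters the normalisation of the statistic.

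Applying the de-sparsification expansion \eqref{Texpantion} on each side with the appropriate true precision matrix yields
\[
\hat T^l_{\nmax}(t^\star) - \hat T^r_{\nmax}(t^\star) = (\Theta_1 - \Theta_2) + \tfrac{1}{\nmax}\Bigl(\sum_{i \in \Il} Z^{(1)}_i - \sum_{i \in \Ir} Z^{(2)}_i\Bigr) + \bigl(r^l_{\nmax}(t^\star) - r^r_{\nmax}(t^\star)\bigr),
\]
where $Z^{(j)}_i$ denotes the analogue of $Z_i$ under the $j$-th regime. On $\Tauarr$ the third term is controlled entry-wise by $2\Rt$, so the reverse triangle inequality gives
\[
A_{\nmax}(t^\star) \;\ge\; \sqrt{\tfrac{\nmax}{2}}\,\infnorm{S}(\Delta - 2\Rt) \;-\; \Xi,
\]
with $\Xi$ the $S^{-1}$-rescaled centred fluctuation built from the $Z^{(j)}_i$.

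The remaining task reduces to showing $\Xi \le \q$ and $\xalphann{\nmax}\le \q$, each outside a small-probability event; together with assumption \eqref{nass} these force $A_{\nmax}(t^\star) > \xalphann{\nmax}$. For $\Xi$, the high-dimensional CLT of \cite{Chernozhukov2014} combined with \ref{twomaticies} (controlling the discrepancy between the plug-in and true covariance of the $Z_i$ by $\deltaY(r)$) approximates $\Xi$ by the $\ell_\infty$ norm of a centred Gaussian vector of dimension $p^2$ with marginal variance at most $1+\deltaY(r)$; a union bound over the $N\abs{\n}p^2$ coordinates then reproduces the logarithm in the definition \eqref{precqdef} of $\q$, at a total probability cost captured by $\probsigmay$. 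For $\xalphann{\nmax}$, the same Gaussian comparison and covariance-proximity arguments applied to $\Anb$ (Sections \ref{GaprAB}, \ref{twomatsec}, \ref{AAbSim}) show that the $\alphacorrected$-quantile of $\Anb$ is at most $\q$, with the hypothesis $\abs{\n}\Rboot \le \alpha/6$ ensuring that the multiplicity-corrected level $\alphacorrected$ exceeds the Gaussian tail probability implicit in the denominator of $\q$.

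The main obstacle is the bound $\xalphann{\nmax}\le \q$: the bootstrap-validity proof of Theorem \ref{main} uses a two-sided quantile matching via the ``sandwiching'' technique, whereas here only a strictly one-sided deterministic control of the bootstrap quantile by an explicit Gaussian tail is required. This re-uses the approximation machinery of Section \ref{mainProof}, but requires carefully propagating the $\Rboot$ slack through the multiplicity correction $\alphacorrected$ so that the explicit $\q$ defined in \eqref{precqdef} is a provable envelope for $\xalphann{\nmax}$.
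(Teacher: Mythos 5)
Your high-level plan matches the paper's: fix $n^\star=\nmax$ and a straddling central point, bound the bootstrap threshold $\xalphann{\nmax}$ by $\q$ via the Gaussian approximation (\ref{bootstrapGarLemma}), the covariance-proximity bound (\ref{twomaticies}), and a Chernoff/union bound over the $\abs{\Tnmax}p^2$ coordinates, then invoke \eqref{nass}. Where you diverge is in the treatment of the lower bound on $A_{\nmax}$, and this is where a genuine gap appears.

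You decompose $\hat T^l_{\nmax}(t^\star)-\hat T^r_{\nmax}(t^\star)$ into signal $\Theta_1-\Theta_2$, a centred stochastic fluctuation $\Xi$, and the de-sparsification remainders, and then plan to show $\Xi\le\q$ and $\xalphann{\nmax}\le\q$. But these two inequalities together with \eqref{nass} only give
$A_{\nmax}(t^\star)\ge \sqrt{\nmax/2}\,\infnorm{S}(\Delta-2\Rt)-\Xi \ge \q-\q = 0$,
which does not beat a threshold of order $\q$. To close your argument you would need the strengthened hypothesis $\sqrt{\nmax/2}\,\infnorm{S}(\Delta-2\Rt)\ge 2\q$ (or a separate, tighter envelope for $\Xi$), and you would also incur an extra failure probability for the event $\{\Xi>\q\}$, which is not covered by $\probsigmay$ (that term accounts only for the event $\infnorm{\hatSigmaY-\trueSigmaY}>\deltaY$, so your statement that the union-bound cost is ``captured by $\probsigmay$'' mis-allocates the probability budget). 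As written, your proposal therefore proves a weaker statement than \ref{thSensPrecFS}.

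The paper avoids this entirely: it never separates out a fluctuation term. It reads the hypothesis ``$\infnorm{r^{\s}_{\nmax}(t)}\le\Rt$ on $\Tauarr$'' as giving a deterministic entrywise control of the difference $\hat T^\s_{\nmax}(t^\star)-\Theta_j$ once the sample-covariance concentration event $\Tauarr$ is in force, so that $A_{\nmax}\ge\sqrt{\nmax/2}\,\infnorm{S}(\Delta-2\Rt)$ holds with no residual randomness to subtract. The only probability cost is then $\probsigmay$ from \ref{twomaticies} (used inside the quantile bound), which is exactly what the theorem claims. If you want to keep your explicit $\Xi$ decomposition, you must also change assumption \eqref{nass} and the probability claim accordingly; otherwise you should fold the sample-covariance fluctuation into $\Rt$ as the paper does, which is precisely the role of conditioning on $\Tauarr$.
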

\paragraph{Discussion of finite-sample sensitivity result}
The assumption \eqref{nass} is rather complicated. Here we note that if either graphical lasso \cite{ravikumar2011}, adaptive graphical lasso \cite{zou2006} or  thresholded de-sparsified estimator based on node-wise procedure \cite{janamb} with penalization parameter chosen as $\lambda_s \asymp o(\sqrt{\log p / n}) $ was used, given $d,s,p,N,\nmin, \nmax \rightarrow \infty$, $N > 2\nmax$, $\nmax \ge \nmin$, $s \ge \nmin$  and $d = o(\sqrt{\nmax})$ it boils down to
\begin{equation}
\nmax \ge D_6\frac{1}{\Delta} \left(\infnorm{\inv{S}}\log(N\abs{\n}p^2)  \right)^2
\end{equation}
for some positive constant $D_6$ independent of $N, \n, p, d, S$ while the parameters $q$, $\gamma$ and $x$ may be chosen as in \eqref{precq}, \eqref{precgamma}, \eqref{precx} (high probability of $\Tauarr$ is ensured by \ref{sigmaconcentration}).
At the same time the remainder $\Rboot$ can be bounded by \eqref{abootbound}.

As expected, the bound for sufficient window size decreases with growth of the break extent $\Delta$ and the size of the set $\Istable$, but increases with dimensionality $p$. It is worth noticing, that the latter dependence is only logarithmic. And again, in the same way as with  \ref{main}, the bound increases with the sample size $N$ (only logarithmically) since we use only $2n$ data points.

\begin{proof}[Proof of \ref{thSensPrecFS}]
	
	The strategy of the proof is straightforward. 
	\begin{enumerate}
		\item  Bound the probability of a large deviation of the Gaussian approximation $\infnorm{\zeta_{\nmax}}$ of $\Ab_{\nmax}$ using Chernoff bound.
		\item  Bound the corresponding critical level $\xalphann{\nmax}$ up to the remainder term $\Rboot$, using the approximation guaranties provided by  \ref{bootstrapGarLemma}.
		\item  Conclude that $A_{\nmax} > \xalphann{\nmax}$ and therefore the $\Hnull$ is rejected, by construction of the test statistic $A_{\nmax}$.
	\end{enumerate}
	First, denote all the window sizes being considered as $n_1, n_2, ... n_{\abs{\n}}$: 
	
	\begin{equation}\label{key}
	\n = \{n_1, n_2, ...,  n_{\abs{\n}}\},
	\end{equation}
	\begin{equation}\label{key}
	\nmin = n_1 < n_2 < ... < n_{\abs{\n}} = \nmax.
	\end{equation}
	Consider a pair of centered normal vectors
	
	\begin{equation}
	\eta \coloneqq \left(\begin{array}{cccc}
		\eta_{n_1} &\eta_{n_2} &... &\eta_{ n_{\abs{\n}}}
	\end{array}\right) \sim \N{0}{\trueSigmaY},
	\end{equation}
	\begin{equation}
		\zeta \coloneqq \left(\begin{array}{cccc}
			\zeta_{n_1} &\zeta_{n_2} &... &\zeta_{ n_{\abs{\n}}}
		\end{array}\right) \sim \N{0}{\hatSigmaY},
	\end{equation}
	\begin{equation}
		\trueSigmaY \coloneqq\frac{1}{{2\nmax}} \sum_{j=1}^{2\nmax} \Var{Y^{n}_{\cdot j}},
	\end{equation}	
	\begin{equation}
		\hatSigmaY \coloneqq\frac{1}{{2\nmax}} \sum_{j=1}^{2\nmax} \Var{Y^{n\flat}_{\cdot j}},
	\end{equation}	 
	where vectors $Y^n_{\cdot j}$ and $Y^{n\flat}_{\cdot j}$ are defined in proofs of  \ref{SGar} and \ref{SbGarRandomBounds} respectively. 
	
	\ref{maxBound} applies here and yields for all positive $\q$
	\begin{equation}
	\Prob{ \infnorm{\zeta_{\nmax}} \ge \q } \le 2\abs{\Tnmax}p^2\exp\left(-\frac{\q^2 }{2\infnorm{\hatSigmaY} }\right),
	\end{equation}
	where $\hatSigmaY = \Var{\zeta}$ and $\abs{\Tnmax}$ is the number of central points for window of size $\nmax$.  Applying \ref{twomaticies} on a set of probability at least $1-\probsigmay$ yields 
	$\infnorm{\trueSigmaY - \hatSigmaY} \le \deltaY$, and hence, due to the fact that $\infnorm{\trueSigmaY} = 1$ by construction,
	\begin{equation}
	\Prob{ \infnorm{\zeta_{\nmax}} \ge \q } \le 2\abs{\Tnmax} p^2\exp\left(-\frac{\q^2 }{2\left(1 + \deltaY\right)  }\right).
	\end{equation}
	Due to \ref{bootstrapGarLemma} and continuity of Gaussian c.d.f.
	\begin{equation}
	\Probboot{A^\flat_{\nmax} \ge \xalphann{\nmax}} \ge \alpha/\abs{\n} -  2\Rboot
	\end{equation}
	and due to \ref{bootstrapGarLemma} along with the fact that $\abs{\Tnmax} < N$, choosing $\q$ as proposed by equation \eqref{precqdef}
	we ensure that $\xalphann{\nmax} \le\q $.
	
	Now by assumption of the theorem and by construction of the test statistics $\An$
	
	\begin{equation}
	A_{\nmax} \ge \sqrt{\frac{\nmax}{2}}\infnorm{{S}}\left(\Delta - 2\Rt\right).
	\end{equation}
	Finally, we notice that due to assumption \eqref{nass} $A_{\nmax} > \q$ and therefore, $\Hnull$ will be rejected.
\end{proof}

\begin{remark}\label{dropass}
The assumption $\Istable \subseteq 1..\tau$ is only technical. 
A similar result may be proven without relying on it by methodologically the same argument. Really, if we drop the assumption, the matrix $S$ (depending only on the distribution before the break) will fail to normalize the vectors $Y^{n\flat}_{\cdot j}$ and therefore $\infnorm{\hatSigmaY}$ will significantly deviate from $1$. Yet a bound (omitted for brevity) of sort $\infnorm{\hatSigmaY} < C$  where $C$ would depend on $\deltaY$, distributions before and after the break as well as on the portions of the data points before and after the break included in the $\Istable$ can be established. The term $C$ will have to enter the definition \eqref{precqdef} of $\q$ instead of $(1+\deltaY)$.
\end{remark}

\begin{lemma}
	Consider a centered random Gaussian vector $\xi\in \R^p$ with arbitrary covariance matrix $\Sigma$. 
	For any positive $\q$ it holds that 
	\begin{equation}
	\Prob{\max_i \xi_i \ge \q } \le p\exp\left(-\frac{\q^2 }{2\infnorm{\Sigma} }\right).
	\end{equation}
\end{lemma}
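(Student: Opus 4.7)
The plan is a straightforward two-step argument combining a union bound with the standard Gaussian tail estimate, so the lemma should not pose any real obstacle. First I would bound the probability of the maximum exceeding $\q$ by $\sum_{i=1}^p \Prob{\xi_i \ge \q}$ via a union bound over the $p$ coordinates. This reduces the question to a one-dimensional Gaussian deviation inequality applied to each marginal $\xi_i \sim \N{0}{\Sigma_{ii}}$.

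Next I would apply the classical Chernoff argument for a centered Gaussian with variance $\sigma_i^2 := \Sigma_{ii}$: minimising $\exp(-\lambda \q)\,\E{\exp(\lambda \xi_i)} = \exp(\lambda^2 \sigma_i^2/2 - \lambda \q)$ over $\lambda > 0$ yields $\Prob{\xi_i \ge \q} \le \exp\!\left(-\q^2/(2\sigma_i^2)\right)$. To pass from $\sigma_i^2$ to the matrix norm $\infnorm{\Sigma}$, I would invoke the elementary fact that for a positive semi-definite matrix $\Sigma$, the entry-wise maximum $\infnorm{\Sigma}$ is attained on the diagonal, since $|\Sigma_{ij}| \le \sqrt{\Sigma_{ii}\Sigma_{jj}} \le \max_k \Sigma_{kk}$. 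Consequently $\sigma_i^2 \le \infnorm{\Sigma}$ for every $i$, and the exponent in each summand is bounded uniformly by $-\q^2/(2\infnorm{\Sigma})$.

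Putting the pieces together, each of the $p$ summands is bounded by $\exp(-\q^2/(2\infnorm{\Sigma}))$, and the union bound multiplies by $p$, yielding exactly the claimed inequality. No step here is genuinely delicate; the only micro-issue worth stating explicitly is the PSD reduction identifying $\infnorm{\Sigma}$ with $\max_i \Sigma_{ii}$, which ensures that the variance appearing in the Chernoff bound is uniformly controlled by the norm used in the conclusion.
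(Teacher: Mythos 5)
Your proposal is correct and essentially matches the paper's proof: the paper bounds the moment generating function of the maximum by $\sum_i \E{e^{t\xi_i}} \le p\,e^{t^2\infnorm{\Sigma}/2}$ and then applies Markov's inequality and optimizes over $t$, whereas you apply the union bound first and Chernoff to each coordinate, which commutes the sum and the optimization but produces the identical bound. (As a side remark, the paper's displayed chain contains a typo writing $\E{e^{t\sum_i \xi_i}}$ where $\E{\sum_i e^{t\xi_i}}$ is meant; your version avoids this slip, and your observation that $\infnorm{\Sigma} = \max_i \Sigma_{ii}$ for PSD $\Sigma$ makes the last step explicit.)
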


\begin{proof}
	By convexity we obtain the following chain of inequalities for any $t$
	\begin{equation}
	\ex{t\E{\max_i \xi_i}} \le \E{\ex{t\max_i \xi_i}} \le \E{\ex{t\sum_i \xi_i}} \le p \ex{t^2\infnorm{\Sigma}/2}.
	\end{equation}
	Chernoff bound yields for any $t$ 
	\begin{equation}
		\Prob{\max_i \xi_i \ge \q } \le  \frac{p \ex{t^2\infnorm{\Sigma}/2}}{\ex{t\q}}.
	\end{equation}	
	Finally, optimization over $t$ yields the claim.
\end{proof}

As a trivial corollary, one obtains 
\begin{lemma}\label{maxBound}
	Consider a centered random Gaussian vector $\xi\in \R^p$ with arbitrary covariance matrix $\Sigma$. 
	For any positive $\q$ it holds that 
	\begin{equation}
	\Prob{ \infnorm{\xi} \ge \q } \le 2p\exp\left(-\frac{\q^2 }{2\infnorm{\Sigma} }\right).
	\end{equation}
\end{lemma}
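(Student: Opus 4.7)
The plan is a one-line reduction to the preceding (unnamed) lemma, which already bounds $\Prob{\max_i \xi_i \ge \q}$ by $p\ex{-\q^2/(2\infnorm{\Sigma})}$. The first step would be to rewrite the two-sided tail as a union of two one-sided tails via the identity
\begin{equation}
\infnorm{\xi} \;=\; \max_i \abs{\xi_i} \;=\; \max\left(\max_i \xi_i,\ \max_i (-\xi_i)\right),
\end{equation}
so that the event $\{\infnorm{\xi}\ge \q\}$ is contained in the union $\{\max_i \xi_i \ge \q\} \cup \{\max_i (-\xi_i) \ge \q\}$.

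Next I would apply the union bound and invoke the preceding lemma separately on $\xi$ and on $-\xi$. The vector $-\xi$ is again a centered Gaussian, and its covariance matrix equals $\Sigma$ (so in particular $\infnorm{\Sigma}$ is unchanged). Hence the preceding lemma gives
\begin{equation}
\Prob{\max_i \xi_i \ge \q} \;\le\; p\ex{-\q^2/(2\infnorm{\Sigma})}, \qquad \Prob{\max_i (-\xi_i) \ge \q} \;\le\; p\ex{-\q^2/(2\infnorm{\Sigma})}.
\end{equation}
Summing these two identical bounds yields the advertised $2p$ prefactor.

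There is no genuine obstacle: the entire argument reduces to the symmetrization identity above followed by a union bound, which is precisely the reason the authors prefix the statement with ``As a trivial corollary.'' The only thing worth checking explicitly is the invariance $\Var{-\xi}=\Sigma$, which is immediate from $\textrm{Cov}(-\xi_i,-\xi_j)=\textrm{Cov}(\xi_i,\xi_j)$.
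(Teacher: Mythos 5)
Your proof is correct and matches the paper's intent exactly: the paper simply labels this a ``trivial corollary'' of the preceding one-sided lemma, and the symmetrization $\infnorm{\xi}=\max(\max_i\xi_i,\max_i(-\xi_i))$ followed by a union bound over $\xi$ and $-\xi$ (both $\mathcal{N}(0,\Sigma)$) is precisely the intended argument.
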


\section{Proof of bootstrap validity result}\label{mainProof}
\begin{proof}[Proof of \ref{main}]
	Proof consists in applying of the finite-sample \ref{mainFS}. Its applicability is guaranteed by the consistency results given in papers \cite{ravikumar2011, mypaper, janamb} and by the results from \cite{sara, janamb, mypaper} bounding the term $\Rt$. High probability of set $\TauT$ is ensured by \ref{sigmaconcentration}.
\end{proof}

\begin{theorem}\label{mainFS}
	Assume $\Hnull$ holds and furthermore, let $X_1, X_2,...X_N$ be i.i.d.
	Let $\hatTheta$ denote a symmetric estimator of $\trueTheta$ s.t. for some positive $r$
	
	\begin{equation}
	\infnorm{\trueTheta - \hatTheta} < r
	\end{equation}
	and $\trueTheta_{ij} = 0 \Rightarrow \hatTheta_{ij}=0$. 	
	Suppose \ref{subGaussianVector} holds and there exists $\Rt$ such that $\sqrt{n}\infnorm{r_n^\s(t)} \le \Rt$ for all $\s\in \{l,r\}$, $n \in \n$ and $t\in \Tn$ on set 
	\begin{equation}
	\TauT \coloneqq \left\{\forall \s \in \{l,r\}, n \in \n, t \in\Tn : \infnorm{\hatSigma_n^\s (t) - \E{X_1 X_1^T}} \le \delta_n \right\}.
	\end{equation}
	 Moreover, let 
	
	\begin{equation}\label{key}
	R \coloneqq \left(3+2\abs{\n}\right)\left(2\Ra(\Rt) + 2\Rboot + \Rsigma(r)\right) \le \frac{\alpha}{2},
	\end{equation}
	where the remainders $\Ra$, $\Rboot$, $\Rsigma$ are defined in \ref{AGar}, \ref{bootstrapGarLemma} and \ref{sandwitch} respectively and the mis-tie $\deltaY$ involved in the definition of $\Rsigma$ comes from \ref{twomaticies}.
	Then on set $\TauT$ it holds that 
	\begin{equation}
	\abs{\Prob{\forall n \in \n : \An \le \xalphan} - (1-\alpha ) }  \le R	+ 2(1-q).
	\end{equation}
	where 
	\begin{equation}\label{probability}
	q =1-\probsigmay-\probSigmaBound{s}{\gamma} - \probMoment{x}{t} 
	\end{equation}
	and the terms $\probsigmay$, $\probSigmaBound{s}{\gamma}$ and $\probMoment{x}{t}$ are defined in \ref{twomaticies}, \ref{sigmaconcentration} and \ref{bootstrapGarLemma} respectively.
\end{theorem}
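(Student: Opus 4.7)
The plan is to execute the five-step strategy sketched in the paragraph following \ref{main}, carefully tracking every remainder so that they aggregate into the constant $R$ in the statement. Throughout I would work on the event $\TauT$, on which the expansion \eqref{Texpantion} is valid with $\sqrt{n}\infnorm{r_n^\s(t)} \le \Rt$, and I would condition on the data when handling the bootstrap measure $\Pb$.

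First I would substitute \eqref{Texpantion} into \eqref{Adef} to rewrite $A_n(t)$ as $\infnorm{(1/\sqrt{2n})\inv{S}\overline{\sum_{i \in \Il} Z_i - \sum_{i \in \Ir} Z_i}}$ up to a deterministic error of order $\infnorm{\inv{S}}\Rt$. Sub-Gaussianity \ref{subGaussianVector} together with the bounded spectrum of $\trueTheta$ delivers the moment conditions required to invoke the high-dimensional CLT of Chernozhukov--Chetverikov--Kato, packaged here as \ref{AGar}, on the stacked vector indexed by $(n,t,u,v)$; this yields a Gaussian vector $\eta$ such that the joint law of $\{\An\}_{n \in \n}$ is close in Kolmogorov distance to that of $\{\infnorm{\eta_n}\}_{n \in \n}$ with error $\Ra(\Rt)$. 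I would mirror the argument under $\Pb$: the bootstrap sample $\Xb_i$ is i.i.d.\ with the empirical covariance, so the analogous expansion combined with the same CLT, invoked through \ref{bootstrapGarLemma}, produces a Gaussian vector $\zeta$ approximating $\{\Anb\}_{n \in \n}$ with error $\Rboot$. The covariance matrices of $\eta$ and $\zeta$ are $\trueSigmaY$ and $\hatSigmaY$, and \ref{twomaticies} controls $\infnorm{\trueSigmaY - \hatSigmaY}$ by $\deltaY(r)$ with probability at least $1 - \probsigmay$; combined with the Gaussian comparison inequality, this translates into uniform closeness of the joint laws of $\infnorm{\eta_n}$ and $\infnorm{\zeta_n}$ with error $\Rsigma(r)$.

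Assembling the three approximations gives, on the event of probability at least $q$ whose complement is absorbed into the $2(1-q)$ term, a uniform Kolmogorov-type bound
\begin{equation}
\sup_{x \in \R^{\abs{\n}}} \abs{\Prob{\forall n \in \n : \An \le x_n} - \Probboot{\forall n \in \n : \Anb \le x_n}} \le 2\Ra(\Rt) + 2\Rboot + \Rsigma(r).
\end{equation}
The remaining and most delicate task is to cope with the fact that each threshold $\xalphan = \zb_n(\alphacorrected)$ is itself data-dependent and random, which forbids a direct substitution into the display above. This is the main obstacle, and I would resolve it by the sandwiching argument of \cite{SpokWillrich} formalized in \ref{sandwitch}: the construction \eqref{alphastardef} gives $\Probboot{\exists n : \Anb > \xalphan} \le \alpha$, with a matching lower bound up to the continuity of the bootstrap c.d.f.\ \eqref{tailfunctionDEF}. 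Sandwiching the random vector $\{\xalphan\}_{n \in \n}$ coordinatewise between two deterministic vectors and applying the uniform bound at every step produces the multiplicity prefactor $(3+2\abs{\n})$ in front of $2\Ra + 2\Rboot + \Rsigma$, which is exactly the form of $R$. Adding $2(1-q)$ for the low-probability events then yields the claimed inequality.
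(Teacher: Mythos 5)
Your overall route matches the paper's exactly: the (very terse) proof of \ref{mainFS} simply invokes \ref{SGar}/\ref{AGar} for the Gaussian approximation of $\{\An\}$, \ref{SbGarRandomBounds}/\ref{bootstrapGarLemma} for the bootstrap side, \ref{twomaticies} plus the Gaussian comparison \ref{gaussianComparison} assembled in \ref{precTVlemma} for the comparison of the two laws, and finally the sandwiching lemma \ref{sandwitch} to handle the randomness of the thresholds $\xalphan$. So the architecture of your proposal is the same one the authors use.

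However, your bookkeeping of the remainders is off, and the slip happens to self-cancel. Assembling the three approximations via the triangle inequality gives (as in \ref{precTVlemma})
\begin{equation}
\sup_{\{x_n\}_{n\in\n}}\abs{\Prob{\forall n: \An \le x_n} - \Probboot{\forall n: \Anb \le x_n}} \le \Ra + \Rboot + \Rsigma,
\end{equation}
\emph{not} $2\Ra + 2\Rboot + \Rsigma$ as your display claims: each Gaussian approximation error enters once, and the covariance comparison contributes $\Rsigma$. The additional copy of $\Ra + \Rboot$ is not part of the total-variation-type comparison; it is produced by \ref{sandwitch} itself, whose conclusion is $\left(3+2\abs{\n}\right)\left(R + \Ra + \Rboot\right)$ where $R$ denotes the bound in assumption \eqref{tv}. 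You instead double the remainders at the comparison step and then multiply only by $\left(3+2\abs{\n}\right)$; the arithmetic coincidence gives the correct final expression $\left(3+2\abs{\n}\right)\left(2\Ra + 2\Rboot + \Rsigma\right)$, but the attribution of the extra $\Ra + \Rboot$ is wrong, which matters because \ref{sandwitch} needs the Gaussian approximation bounds \eqref{gar} and \eqref{garb} as separate hypotheses over and above the comparison bound \eqref{tv} — they are consumed twice for a reason, not folded into a single step.
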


\paragraph{Discussion of finite-sample bootstrap validity result}\label{remark}
The terms $\deltaY$, $\Ra$, $\Rboot$ and $\Rsigma$ involved in the statement of \ref{mainFS} are rather complicated. The exact expressions for them  are provided by  \ref{twomaticies}, \ref{AGar}, \ref{bootstrapGarLemma} and \ref{sandwitch} respectively, 3rd and 4th moments $M_3^3$ and $M_4^4$ involved therein are bounded by \ref{thirdAndFourthMomentBounds} and  \ref{ZconcentrationComponentWise} while asymptotic bounds for $\Rt$ are provided in \cite{janamb} (for node-wise procedure) and \cite{sara} (for graphical lasso). For the case of graphical lasso an explicit form of $\Rt$ is given in \cite{mypaper}.

Here we just note that if $\hatTheta$ is a root-$n$ consistent estimator, recovering sparsity pattern (graphical lasso \cite{ravikumar2011}, adaptive graphical lasso \cite{zou2006} or  thresholded de-sparsified estimator based on node-wise procedure \cite{janamb}), then for $d,s,p,N,\nmin, \nmax \rightarrow \infty$, $N > 2\nmax$, $\nmax \ge \nmin$, $s \ge \nmin$ and $\frac{d^2}{\nmin} = o(1)$ given the spectrum of $\trueTheta$ is bounded 

\begin{equation}\label{abootbound}
\Rboot \le D_1  \left(\frac{ L^4d\log^{7}(2p^2T\nmax)}{\nmin}\right)^{1/6}\log^2(ps).
\end{equation}
If either  graphical lasso, adaptive graphical lasso or node-wise procedure \cite{meinshausen2006} is used with $\lambda_n \asymp \sqrt{\frac{\log p}{n}}$ in order to obtain $\hatTheta^\s_n(t)$, then on set $\TauT$ it holds that

\begin{equation}\label{raass}
\Ra \le D_2\left(\frac{L^4d \log^{7}(2p^2T\nmax)}{\nmin}\right)^{1/6} +  D_3 \sqrt{\frac{{\log 2p^2T}}{{\nmin}}} d \log p .
\end{equation}
The high probability of $\TauT$ may be ensured by means of \ref{sigmaconcentration} e.g., choosing $\gamma = \log (500T)$ for $\Prob{\TauT} \ge 0.99$.
Further
\begin{equation}
\deltaY \le D_4   \frac{L^4d^2}{\sqrt{s}},
\end{equation}

\begin{equation}
\Rsigma \le
D_5\left(\frac{L^4d^2}{\sqrt{s}}\right)^{1/3} \log^{2/3}(2p^2T).
\end{equation}
Here $D_1, ..., D_5$ are positive constants independent of $N$, $\n$, $d$, $p$ and $s$.
We also note that the proper choice of $x$, $\gamma$ and $q$ in \eqref{probability} is 

\begin{equation}\label{precx}
x = 6,
\end{equation}

\begin{equation}\label{precgamma}
\gamma = \log(500T),
\end{equation}

\begin{equation}\label{precq}
q = 7+ 4\log(p)
\end{equation}
which ensures the probability defined by \eqref{probability} to be above $0.99$. For exact expression of $\probsigmay$, $\probSigmaBound{s}{\gamma}$ and $\probMoment{x}{t}$ see \ref{twomaticies}, \ref{sigmaconcentration} and \ref{bootstrapGarLemma}.

\begin{proof}[Proof of \ref{mainFS}]
	The proof consists in application of \ref{SbGarRandomBounds}, \ref{SGar} and \ref{precTVlemma} justifying applicability of \ref{sandwitch}.
\end{proof}

\section{Sandwiching lemma} \label{appa}
\begin{lemma}\label{sandwitch}
	Consider a normal multivariate vector $\eta$ with a deterministic covariance matrix and  a normal multivariate vector $\zeta$ with a possibly random covariance matrix such that
	\begin{equation}\label{gar}
	\sup_{\{x_n\}_{n \in \n} \subset \R} \abs{\Prob{\forall n \in \n : \An \le x_n} - \Prob{\forall n \in \n  : \infnorm{\eta_n} \le x_n}}  \le \Ra ,
	\end{equation}
	
	\begin{equation} \label{garb}
	\sup_{\{x_n\}_{n \in \n} \subset \R} \abs{\Probboot{\forall n \in \n : \Anb \le x_n} - \Probboot{\forall n \in \n  : \infnorm{\zeta_{n}} \le x_n}}  \le \Rboot, 
	\end{equation}
	
	\begin{equation} \label{tv}
	\sup_{\{x_n\}_{n \in \n} \subset \R} \abs{\Prob{\forall n \in \n : \An \le x_n} - \Probboot{\forall n \in \n : \Anb \le x_n}} \le R.
	\end{equation}
	where $\eta_n$ and $\zeta_n$ are sub-vectors of $\eta$ and $\zeta$ respectively.
	Then 
	
	\begin{equation}
	\abs{\Prob{\forall n \in \n : \An \le \xalphan} - (1-\alpha)} \le \left(3+2\abs{\n}\right) \left(R +\Ra + \Rboot\right).
	\end{equation}
\end{lemma}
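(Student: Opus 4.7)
The plan is to first establish a uniform-in-$y$ comparison between four joint CDFs and then transport it to the random argument $y = z^\flat(\alphacorrected)$. Define
\begin{equation}
F(y) = \Prob{\forall n \in \n : \An \le y_n}, \quad F^\flat(y) = \Probboot{\forall n : \Anb \le y_n},
\end{equation}
and let $G(y), G^\flat(y)$ denote the corresponding joint CDFs of $(\infnorm{\eta_n})_n$ and $(\infnorm{\zeta_n})_n$. Combining \eqref{gar}, \eqref{garb}, \eqref{tv} with the triangle inequality yields $\sup_y |F - G^\flat| \le \Ra + R$ and $\sup_y |G - G^\flat| \le \Ra + \Rboot + R$, together with all analogous pairwise bounds. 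Because each sup is taken pointwise over every deterministic $y$, each of these inequalities continues to hold almost surely when $y$ is any random vector measurable with respect to the data; in particular it holds at $y = z^\flat(\alphacorrected)$.

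\textbf{Lower bound.} Applying \eqref{tv} at the random point $y = z^\flat(\alphacorrected)$ gives $|F(z^\flat(\alphacorrected)) - F^\flat(z^\flat(\alphacorrected))| \le R$. The sup-definition of $\alphacorrected$ in \eqref{alphastardef}, together with right-continuity of $F^\flat$ in each coordinate, forces $F^\flat(z^\flat(\alphacorrected)) \ge 1 - \alpha$, and hence $\Prob{\forall n : \An \le \xalphan} \ge 1 - \alpha - R$.

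\textbf{Upper bound (the hard direction).} This is where the main obstacle lives: $F^\flat$ may jump at $\alphacorrected$, so the matching inequality $F^\flat(z^\flat(\alphacorrected)) \le 1 - \alpha$ need not hold. I would bypass the jump by routing through the Gaussian CDF $G^\flat$, which, being the CDF of the maxima of absolute centered Gaussians, is continuous in each of its $|\n|$ coordinates. For any $\delta > 0$ the sup-definition of $\alphacorrected$ gives $F^\flat(z^\flat(\alphacorrected + \delta)) < 1 - \alpha$, hence $G^\flat(z^\flat(\alphacorrected + \delta)) < 1 - \alpha + \Rboot$ by \eqref{garb}. Letting $\delta \downarrow 0$ and invoking continuity of $G^\flat$ coordinate by coordinate --- one scale $n \in \n$ at a time, each pass replacing $z_n^\flat(\alphacorrected + \delta)$ by $z_n^\flat(\alphacorrected)$ at a cost of $\Rboot$ from reapplying \eqref{garb} --- produces $G^\flat(z^\flat(\alphacorrected)) \le 1 - \alpha + O(|\n|\Rboot)$. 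Chaining back through $G^\flat \approx G \approx F$ using the uniform CDF estimates of the first paragraph then yields $F(z^\flat(\alphacorrected)) \le 1 - \alpha + (3 + 2|\n|)(R + \Ra + \Rboot)$, which is the announced bound. The coefficient $2|\n|$ is exactly the bookkeeping cost of the $|\n|$ coordinate-wise perturbations, each one incurring a fresh application of \eqref{garb} and \eqref{gar}.

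\textbf{Main obstacle.} The principal technical difficulty is thus circumventing possible discontinuities of $F^\flat$ at $\alphacorrected$ while simultaneously controlling all $|\n|$ scales. The Gaussian anti-concentration implicit in \eqref{gar} and \eqref{garb} is the crucial enabler: it guarantees that infinitesimal shifts in any single quantile $z_n^\flat$ translate into probability changes of order $\Rboot$, so the coordinate-by-coordinate perturbation argument costs at most a factor $|\n|$ rather than blowing up. Combining the lower and upper bounds gives the claim with the announced constant $3 + 2|\n|$.
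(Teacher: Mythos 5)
Your overall strategy coincides with the paper's: sandwich the random quantity $\Prob{\forall n : \An \le \xalphan}$ by routing through the two continuous Gaussian CDFs of $(\infnorm{\eta_n})_n$ and $(\infnorm{\zeta_n})_n$, and account for the coordinate-wise perturbation of the bootstrap quantiles. The lower-bound paragraph is fine (and in fact tighter than needed). The gap is in the upper-bound step, specifically in the claim that replacing $z_n^\flat(\alphacorrected+\delta)$ by $z_n^\flat(\alphacorrected)$ in a single coordinate of $G^\flat$ costs ``$\Rboot$ from reapplying \eqref{garb}.'' Inequality \eqref{garb} compares $F^\flat$ with $G^\flat$ evaluated at the \emph{same} point; it says nothing about how $G^\flat$ responds to a change of argument. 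To turn a per-coordinate quantile shift into a per-coordinate probability cost you need three separate facts: (a) the marginal mass of $F^\flat_n$ in $[z_n^\flat(\alphacorrected+\delta),\, z_n^\flat(\alphacorrected)]$ is at most $\delta$, by the definition \eqref{tailfunctionDEF}; (b) by applying \eqref{garb} marginally (taking $x_m=\infty$ for $m\neq n$) the corresponding marginal mass under $G^\flat_n$ differs by at most $2\Rboot$; and, crucially, (c) the change in the \emph{joint} CDF $G^\flat$ when one coordinate moves is bounded by the change of the corresponding \emph{marginal} CDF. Step (c) is not automatic; the paper isolates it as \ref{l21}, which bounds the ratio $\partial_x\Prob{\xi\le x\,\&\,A}/\partial_x\Prob{\xi\le x}$ by one and then feeds it into a Taylor expansion of $\Nb\circ\znb$. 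Your appeal to ``the Gaussian anti-concentration implicit in \eqref{gar} and \eqref{garb}'' does not supply (c): those are Gaussian \emph{approximation} bounds, not anti-concentration or modulus-of-continuity statements about $G^\flat$. A secondary issue is that the coefficient $3+2\abs{\n}$ is asserted rather than derived; once you supply (a)--(c), tracking the Taylor remainder $\kappa\le 2\delta$ with $\delta = R+\Ra+\Rboot$ over the $\abs{\n}$ coordinates is what actually produces the factor $2\abs{\n}$, and the remaining $3$ comes from the three applications of the uniform approximation bounds in the chain $F\leftrightarrow G\leftrightarrow G^\flat\leftrightarrow F^\flat$.
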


\begin{proof}
	Let us introduce some notation. Denote multivariate cumulative distribution function of $\An, \Anb, \infnorm{\eta_{n}}, \infnorm{\zeta_{n}}$ as   $P, \Pb, \NN, \Nb : \R^{\abs{\n}} \rightarrow [0,1]$ respectively.	
	Define the following sets for all $\delta\in [0,\alpha]$
	
	\begin{equation} \label{zmdef}
	\Zp(\delta) \coloneqq  \left\{z : \NN(z) \ge 1-\alpha -\delta\right\},
	\end{equation}
	
	\begin{equation}
	\Zm(\delta) \coloneqq \left\{z : \NN(z) \le  1-\alpha + \delta\right\}
	\end{equation}
	and their boundaries 
	
	\begin{equation} \label{bzmdef}
	\bZp(\delta) \coloneqq  \left\{z : \NN(z) = 1-\alpha - \delta \right\},
	\end{equation}
	
	\begin{equation}
	\bZm(\delta) \coloneqq \left\{z : \NN(z) =  1-\alpha + \delta \right\}.
	\end{equation}
	Consider $\delta = R + \Ra + \Rboot $ and denote sets $\Zp = \Zp(\delta)\text{, } \Zm = \Zm(\delta)\text{, }  \bZm = \bZm(\delta)\text{, } \bZp = \bZp(\delta)$
	Define  a set of thresholds satisfying the confidence level 
	\begin{equation}
	\Zcb \coloneqq \left\{z : \Pb(z) \ge 1-\alpha ~\&~\forall z_1 < z :  \Pb(z_1) < 1-\alpha\right\} 
	\end{equation}
	here and below comparison of vectors should be understood element-wise.
	Notice that due to continuity of multivariate normal distribution and assumption \eqref{garb}  $\forall \zb \in \Zcb$ 
	\begin{equation}\label{zbbound}
	\abs{\Pb(\zb) - (1-\alpha)}  \le \Rboot.
	\end{equation}
	Now for all $\zm \in \bZm$ and for all $\zb \in \Zcb$ it holds that 
	
	\begin{equation}
	\begin{split}
		\Pb(\zm) &\le P(\zm) +R \\ &\le \NN(\zm) +R+\Ra \\ &\le 1-\alpha - \Rboot \\ &\le \Pb(\zb)
	\end{split}
	\end{equation}
	where we have consequently used \eqref{tv}, \eqref{gar}, \eqref{bzmdef} and \eqref{zbbound}.
	In the same way one obtains for all $\zp \in \bZp$ and for all $\zb \in \Zcb$ 
	\begin{equation}
	\Pb(\zp) \ge   \Pb(\zb)
	\end{equation}
	which implies that $\Zcb \subset \Zm \cap \Zp $.

	Now denote quantile functions of $\infnorm{\eta_n}$ as $z^N : [0,1] \rightarrow \R^{\abs{\n}} $: 
	\begin{equation}
	\forall n \in \n : \Prob{\infnorm{\eta_n} \ge  z^N_n(\x) } = \x.
	\end{equation}
	In exactly the same way define quantile functions $\znb :[0,1] \rightarrow \R^{\abs{\n}} $ of $\infnorm{\zeta_n}$.
	Clearly for all $\x \in [0,1]$,  
	\begin{equation}
	z^N(\x + \delta) \le  \zb(\x) \le z^N(\x - \delta)
	\end{equation}
	and hence
	\begin{equation}
	\zb(\alphacorrected) \le  z^N(\alphacorrected - \delta) \le \zb(\alphacorrected - 2\delta),
	\end{equation}
	\begin{equation}
	1-\alpha \le \Pb{( z^N(\alphacorrected - \delta))} \le \Pb{(\zb(\alphacorrected - 2\delta))}.
	\end{equation}
	Using Taylor expansion with Lagrange remainder term we obtain for some $0 \le \kappa \le 2\delta$ 
	\begin{equation}
	\begin{split}
	\Nb\left(\zb(\alphacorrected - 2\delta)\right) &\le \Nb\left(\znb(\alphacorrected - 2\delta)\right) +\delta \\&= \Nb\left( \znb(\alphacorrected )\right) + \sum_{n \in \n} \partial_{z_n^\flat}\Nb(\znb(\alphacorrected )) \partial_\alpha \znb_n(\alpha^*)\kappa  +\delta\\
	&\le 1-\alpha + \sum_{n \in \n} \partial_{z_n^\flat}\Nb(\znb(\alphacorrected )) \partial_\alpha \znb_n(\alpha^*)\kappa + 3\delta.
	\end{split}
	\end{equation}
	Next successively using \ref{l21} and the fact that the quantile function is an inverse function of c.d.f. we obtain
	\begin{equation}
\Nb\left(\zb(\alphacorrected - 2\delta)\right)	\le 1-\alpha +3\delta+ 2\delta \abs{\n}
	\end{equation}
	and therefore 
	\begin{equation}
	1-\alpha \le \Pb\left(\zb(\alphacorrected - 2\delta)\right) \le 1-\alpha + \delta \left(3+2\abs{\n}\right),
	\end{equation}
	\begin{equation}
	1-\alpha \le \Pb\left(z^N(\alphacorrected - \delta)\right) \le 1-\alpha + \delta \left(3+2\abs{\n}\right).
	\end{equation}
	In the same way one obtains 
		\begin{equation}
		1-\alpha - \delta \left(3+2\abs{\n}\right) \le \Pb\left(z^N(\alphacorrected + \delta)\right) \le 1-\alpha.
		\end{equation}
	Next, by the argument used in the beginning of the proof we obtain 
	\begin{equation}
	z^N(\alphacorrected + \delta), z^N(\alphacorrected - \delta) \in \Zm( \delta \left(3+2\abs{\n}\right))\cap \Zp \left(\delta \left(3+2\abs{\n}\right)\right).
	\end{equation}
	As the final ingredient, we need to choose deterministic $\alphap$ and $\alpham$ such that 
	\begin{equation}
	N(z^N(\alpham + \delta))  = 1-\alpha-\delta \left(3+2\abs{\n}\right),
	\end{equation}
	\begin{equation}
	N(z^N(\alphap - \delta))  = 1-\alpha+\delta \left(3+2\abs{\n}\right)
	\end{equation}
(which is possible due to continuity),
	so $\alpham \le \alphacorrected \le \alphap$ and hence by monotonicity
	\begin{equation}
	z^N(\alpham + \delta) \le z^N(\alphacorrected + \delta)\le \zb(\alphacorrected) \le z^N(\alphacorrected - \delta) \le z^N(\alphap - \delta)
	\end{equation}
	and finally
	
	\begin{equation}
	\begin{split}
	1-\alpha-\delta \left(3 +2\abs{\n}\right)&\le  P(z^N(\alpham + \delta))\\  
	&\le P(\zb(\alphacorrected)) \\ 
	&\le P(z^N(\alphap - \delta)) \\ 
	&\le 1-\alpha+\delta \left(3 +2\abs{\n}\right).
	\end{split}	
	\end{equation}	
\end{proof}

\begin{lemma}\label{l21}
	Consider a random variable $\xi$ and an event $A$ defined on the same probability space. Let c.d.f. $\Prob{\xi \le x}$ and $\Prob{\xi \le x \& A}$ be differentiable. Then 
	\begin{equation}
	\frac{\partial_x \Prob{\xi \le x \& A} }{\partial_x\Prob{\xi \le x}} \le 1
	\end{equation}
\end{lemma}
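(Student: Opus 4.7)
The plan is to decompose the unconditional c.d.f.\ of $\xi$ along $A$ and its complement and observe that the ``removed'' part is monotone in $x$.

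First I would write the partition identity
\begin{equation}
\Prob{\xi \le x} \;=\; \Prob{\xi \le x \;\&\; A} \;+\; \Prob{\xi \le x \;\&\; A^c},
\end{equation}
which holds because $A$ and $A^c$ are disjoint and cover the sample space. Under the differentiability hypothesis on $\Prob{\xi \le x}$ and $\Prob{\xi \le x \;\&\; A}$, the third term $G(x) \coloneqq \Prob{\xi \le x \;\&\; A^c}$ is differentiable too, being the difference of two differentiable functions.

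Next, I would observe that $G(x)$ is nondecreasing in $x$: the family of events $\{\xi \le x\}\cap A^c$ is monotone in $x$ (increasing as $x$ grows), so its probability is a nondecreasing function of $x$. Differentiability together with monotonicity gives $\partial_x G(x) \ge 0$. Differentiating the partition identity yields
\begin{equation}
\partial_x\Prob{\xi \le x} \;=\; \partial_x\Prob{\xi \le x \;\&\; A} \;+\; \partial_x G(x) \;\ge\; \partial_x\Prob{\xi \le x \;\&\; A}.
\end{equation}

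Finally, dividing by $\partial_x\Prob{\xi \le x}$ (which is nonnegative as the derivative of a c.d.f., and the inequality is trivial when it vanishes since both sides are then zero by the sandwich $0 \le \partial_x\Prob{\xi \le x \;\&\; A} \le \partial_x\Prob{\xi \le x\} = 0$) produces the claimed bound. There is no real obstacle here; the only minor care point is handling the possibility $\partial_x\Prob{\xi \le x} = 0$, where the ratio should be interpreted via the inequality $\partial_x\Prob{\xi \le x \;\&\; A} \le \partial_x\Prob{\xi \le x}$ directly.
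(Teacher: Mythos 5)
Your proof takes essentially the same route as the paper's: both decompose $\Prob{\xi\le x}$ over $A$ and its complement, differentiate, and invoke nonnegativity of the derivative of the monotone ``removed'' term $\Prob{\xi\le x\ \&\ A^c}$. The only difference is cosmetic — the paper rewrites the ratio as $1/\bigl(1+\partial_x\Prob{\xi\le x\ \&\ \overline{A}}/\partial_x\Prob{\xi\le x\ \&\ A}\bigr)$, whereas you bound the denominator directly, which incidentally handles the degenerate case $\partial_x\Prob{\xi\le x\ \&\ A}=0$ a bit more cleanly than the paper does.
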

\begin{proof}
	Indeed, denoting the complement of set $A$ as $\overline{A}$ we obtain,  
	\begin{equation}
	\begin{split}
		\frac{\partial_x \Prob{\xi \le x \& A} }{\partial_x\Prob{\xi \le x}} & = \frac{\partial_x \Prob{\xi \le x \& A} }{\partial_x\left(\Prob{\xi \le x \&  A} +\Prob{\xi \le x \& \overline{A}}\right) }\\
		 & = \frac{\partial_x \Prob{\xi \le x \& A} }{\partial_x \Prob{\xi \le x \&  A} +\partial_x  \Prob{\xi \le x \& \overline{A}} }\\
 		 & = \frac{1}{1 +\frac{\partial_x  \Prob{\xi \le x \& \overline{A}}}{\partial_x \Prob{\xi \le x \&  A}} }
	\end{split}
	\end{equation}
	Using the fact that derivative of c.d.f. is non-negative we finalize the proof.
\end{proof}

\section{Similarity of joint distributions of $\{A_n\}_{n \in \n}$ and $\{A_n^\flat\}_{n \in \n}$} \label{AAbSim}

\begin{lemma}\label{precTVlemma}
	Under assumptions of \ref{main} it holds that on set $\Tau$ with probability at least
	\begin{equation}
	1-\probsigmay-\probSigmaBound{s}{\gamma} - \probMoment{x}{t}
	\end{equation}
	that
	\begin{equation} 
	\sup_{\{x_n\}_{n \in \n} \subset \R} \abs{\Prob{\forall n \in \n : \An \le x_n} - \Probboot{\forall n \in \n : \Anb \le x_n}} \le \Ra + \Rboot + \Rsigma.
	\end{equation} 
\end{lemma}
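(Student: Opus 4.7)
The plan is to chain three approximations via the triangle inequality for the Kolmogorov-type distance on $\mathbb{R}^{|\n|}$:
\begin{equation}
\{A_n\}_{n \in \n} \;\longrightarrow\; \{\infnorm{\eta_n}\}_{n \in \n} \;\longrightarrow\; \{\infnorm{\zeta_n}\}_{n \in \n} \;\longrightarrow\; \{\Anb\}_{n \in \n},
\end{equation}
where $\eta \sim \N{0}{\trueSigmaY}$ and $\zeta \sim \N{0}{\hatSigmaY}$ are the Gaussian vectors whose sub-vectors $\eta_n$, $\zeta_n$ approximate $A_n$ and $\Anb$, respectively. The first and third arrows are handled off-the-shelf by \ref{AGar} and \ref{bootstrapGarLemma}, which bound the two relevant suprema of c.d.f. differences by $R_A$ (on $\TauT$) and $R_{A^\flat}$ (with the high-probability guarantee for the moment term coming from $\probMoment{x}{t}$).

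The middle arrow, the Gaussian comparison step, is the main obstacle and accounts for $R_\Sigma$. First I would invoke \ref{twomaticies} to obtain, with probability at least $1-\probsigmay - \probSigmaBound{s}{\gamma}$, the componentwise closeness $\infnorm{\trueSigmaY - \hatSigmaY} \le \deltaY$ of the covariance matrices of $\eta$ and $\zeta$. Then I would appeal to the Gaussian comparison machinery of Chernozhukov, Chetverikov and Kato (cited earlier in the bootstrap discussion), which controls
\begin{equation}
\sup_{\{x_n\}} \abs{\Prob{\forall n : \infnorm{\eta_n} \le x_n} - \Prob{\forall n : \infnorm{\zeta_n} \le x_n}}
\end{equation}
by a power of $\deltaY$ times a polylog in $p$ and $T$. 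Since $\infnorm{\eta_n} \le x_n$ is a hyperrectangle event (symmetrized to $-x_n \le \eta_{n,j} \le x_n$) and the joint event over $n \in \n$ is again a hyperrectangle in $\mathbb{R}^{2pT}$, their comparison lemma applies directly and yields the residual $R_\Sigma$ defined via $\deltaY$ in \ref{sandwitch}.

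Finally I would add the three residuals and note that the event carrying the covariance bound, the moment bound for the bootstrap approximation, and the event $\TauT$ on which $R_A$ is valid, all hold simultaneously with probability at least $1 - \probsigmay - \probSigmaBound{s}{\gamma} - \probMoment{x}{t}$, which is exactly the quantity asserted. The non-trivial aspect lies in verifying that the anti-concentration hypothesis required by the Chernozhukov--Chetverikov--Kato comparison is met uniformly in $n \in \n$; this follows since the diagonal of $\trueSigmaY$ equals $1$ by construction (after normalization by $S$), so the minimal variance is bounded away from zero, which is precisely the regime in which their anti-concentration inequality gives the sharp rate entering $R_\Sigma$.
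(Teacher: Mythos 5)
Your plan is essentially the paper's own proof, which is a one-liner chaining the real and bootstrap Gaussian approximation lemmas with \ref{twomaticies} and the Gaussian comparison \ref{gaussianComparison} via the triangle inequality; the probability budget $1-\probsigmay-\probSigmaBound{s}{\gamma}-\probMoment{x}{t}$ arises exactly as you describe, with $\probSigmaBound{s}{\gamma}$ entering through \ref{sigmaconcentration} to secure the hypothesis of \ref{twomaticies} and with the single $\probsigmay$ event shared between the comparison step and \ref{bootstrapGarLemma}, so there is no double counting. A minor slip: the ambient dimension of the hyperrectangles is $2p^2T$, not $2pT$, though this does not affect the argument.
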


\begin{proof}
	The proof consists in applying \ref{SbGarRandomBounds}, \ref{SGar}, \ref{twomaticies} and \ref{gaussianComparison}.
\end{proof}

\section{Gaussian approximation result for $A_n$} \label{GAprA}
\begin{lemma}\label{AGar}
	Suppose there exists $\Rt$ such that $\sqrt{n}\infnorm{r^\s(t)} \le \Rt$ for all $\s$ and $t$ on some set $\Tau$.	
	Then on set $\Tau$ it holds that 
	
	\begin{equation}
	\begin{split}
	\sup_x&\abs{\Prob{\forall n \in \n : \An \le \xn} -  \Prob{\forall n \in \n : \infnorm{\eta^n} \le \xn}} \le \Ra \\&\coloneqq \CA
	\left(\left(F \log^7(p^2T \nmax) \right)^{1/6} + 4\Rt\sqrt{\log (2p^2T)}\right) .
	\end{split}
	\end{equation}	
	where $F$ is defined by \eqref{defF} and $\eta^n$ by \eqref{etadef}.
	
\end{lemma}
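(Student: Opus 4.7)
My plan is to linearize each $A_n(t)$ via the expansion \eqref{Texpantion}, thereby reducing $\max_{n,t} A_n(t)$ to the sup-norm of a sum of $N$ independent centred random vectors, and then to invoke the high-dimensional CLT of \cite{Chernozhukov2014} to couple this sup-norm with $\max_n \infnorm{\eta^n}$. The deviation of $A_n(t)$ from its linearization is controlled deterministically on $\Tau$ using the assumed bound $\sqrt{n}\infnorm{r_n^\s(t)} \le R_T$, while the resulting mismatch between probabilistic events is absorbed via Gaussian anti-concentration.

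Concretely, substituting \eqref{Texpantion} into the definition \eqref{Adef} on the set $\Tau$ and applying the triangle inequality for $\infnorm{\cdot}$ yields
\begin{equation*}
\abs{A_n(t) - \widetilde A_n(t)} \le \sqrt{2}\,\infnorm{\inv{S}}\, R_T
\qquad\text{with}\qquad
\widetilde A_n(t) \coloneqq \infnorm{\frac{1}{\sqrt{2n}}\inv{S}\,\overline{\sum_{i\in\Il} Z_i - \sum_{i\in\Ir} Z_i}},
\end{equation*}
uniformly in $n$ and $t$. I would then stack the $p^2$-dimensional interior vectors over all $(n,t)\in\bigcup_{n\in\n}\{n\}\times\Tn$ into a single random vector $Y\in\R^{p^2 T}$. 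Writing $Y=\sum_{i=1}^N \xi_i$ exposes $Y$ as a sum of independent centred vectors, because the contribution of the $i$-th observation to a coordinate $(n,t,u,v)$ is a weighted, signed copy of $Z_{i,uv}$ that vanishes unless $i\in\Il\cup\Ir$. Assumption \ref{subGaussianVector} supplies the sub-exponential tails of the coordinates of $\xi_i$, so the high-dimensional CLT of \cite{Chernozhukov2014} applied to hyperrectangles of the form $\{y: \forall n,\ \max_{t,u,v}\abs{y_{n,t,u,v}}\le x_n\}$ couples $\max_n\max_t \widetilde A_n(t)$ with $\max_n \infnorm{\eta^n}$ at the Kolmogorov rate $\CA (F\log^7(p^2 T\nmax))^{1/6}$, producing the first summand of $\Ra$ and defining the Gaussian vector $\eta$.

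The final step is to transfer from $\widetilde A_n$ back to $A_n$: the Gaussian anti-concentration inequality of \cite{chernozhukov2013} shows that inflating each threshold $x_n$ by the deterministic offset $\sqrt{2}\,\infnorm{\inv{S}}\,R_T$ changes $\Prob{\forall n:\infnorm{\eta^n}\le x_n}$ by at most a multiple of $R_T\sqrt{\log(2p^2T)}$, which yields the second summand; two triangle inequalities then combine the linearization step with the Gaussian coupling to produce the claimed uniform bound in $\{x_n\}$. The main technical obstacle will be verifying the moment and light-tail hypotheses of the Chernozhukov--Chetverikov--Kato CLT uniformly for the stacked vector $Y$, because each $Z_i$ enters many of its coordinates simultaneously with distinct $1/\sqrt{2n}$ weights; once these conditions are packaged into the constant $F$ appearing in the statement, the remaining coupling and anti-concentration steps are routine.
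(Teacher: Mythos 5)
Your plan follows the paper's three-step structure exactly: linearize via \eqref{Texpantion}, apply the Chernozhukov--Chetverikov--Kato high-dimensional CLT to the leading linear term, and absorb the deterministic residual via Gaussian anti-concentration (Nazarov's inequality, \ref{anti} in the paper). The one place you deviate is in the decomposition feeding the CLT. You write the stacked linear statistic as a sum of $N$ independent, mostly-zero vectors, one per observation $Z_i$. The paper's \ref{SGar} instead groups observations by residue class modulo $2\nmax$: since every window has length at most $2\nmax$, each residue class intersects each window at most once, so the $2\nmax$ columns of the matrix $Y$ in \eqref{BFM} use disjoint $Z_i$'s and are therefore genuinely independent, while their normalized sum reconstructs $\Sz$. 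Both decompositions work --- the ratio $G_n^2/n$ that governs the CLT rate comes out the same either way, because the $\sqrt{N/(2n)}$ scaling in your sparse summands exactly compensates for having $N$ rather than $2\nmax$ of them. The residual differences are confined to the argument of the $\log^7$ term (yours would carry $p^2TN$ rather than $p^2T\nmax$) and to the exact shape of $F$, which you would need to re-derive rather than cite \eqref{defF} verbatim; both discrepancies are absorbed into the universal constant $\CA$. The paper's modular grouping is cleaner in that the summand count is $N$-free, matching the authors' remark that longer samples do not help, while yours is the more elementary route. A small point in your favor: your $\sqrt{2}\,\infnorm{\inv{S}}\Rt$ offset is the accurate one, since $\inv{S}$ multiplies the whole bracket in \eqref{Adef} including the residual; the paper's displayed rearrangement silently drops both $\inv{S}$ and the $\sqrt{n}$ factor, although its final bound is stated as if $\infnorm{\inv{S}}$ were folded into $\Rt$.
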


\begin{proof}
	
	Substituting \eqref{Texpantion} to \eqref{Adef} yields
	
	\begin{equation}
	A_n(t) = \infnorm{ \underbrace{\frac{1}{\sqrt{2n}}  \inv{S}\left(  \sum_{i \in \Il} \Zv_i -  \sum_{i \in \Ir} \Zv_i \right)}_{\Szn(t)}  +  \frac{1}{\sqrt{2}} (\overline{r^l_n} - \overline{r^r_n}) }.
	\end{equation}
	Now denote stacked $\Szn(t)$ for all $n \in \n$ and as $\Szn$ and for all $n$  as $\Sz$.
	\ref{SGar} bounds the c.d.f. of $\infnorm{\Sz}$ as
	
	\begin{equation}
	\sup_x\abs{\Prob{\forall n \in \n : \infnorm{S^n_Z} \le \xn} - \Prob{\forall n \in \n : \infnorm{\eta^n} \le \xn}}  \le\CA
	\left(F \log^7(p^2T \nmax) \right)^{1/6}.
	\end{equation}
	But clearly on set $\Tau$
	
	\begin{equation}
	\abs{A_n - \infnorm{\Sz^n}} \le \sqrt{2} \Rt
	\end{equation}
	And hence for all $\{\xn\}_{n \in \n} \subset \R$
	
	\begin{equation}
	\begin{split}
	\abs{\Prob{\forall n \in \n :  \An < \xn| \Tau} - \Prob{\forall n \in \n : \infnorm{\eta^n} \le \xn}} &\le \CA
	\left(F \log^7(p^2T \nmax) \right)^{1/6} \\&+   \Prob{\forall n \in \n : \infnorm{\eta^n} \le \xn+ \sqrt{2}\Rt} \\&-  \Prob{\forall n \in \n : \infnorm{\eta^n} \le \xn- \sqrt{2}\Rt}.
	\end{split}
	\end{equation}
	Now notice that $\forall i : (\trueSigmaY)_{ii} = 1$ and bound the latter two terms by means of  \ref{anti}:
	
		\begin{equation}
		\begin{split}
		\sup_{\{\xn\}_{n \in \n} \subset \R^{\abs{\n}}}\abs{\Prob{\forall n \in \n :  \An < \xn| \Tau} - \Prob{\forall n \in \n : \infnorm{\eta^n} \le \xn}}&\le \CA
		\left(F\log^7(p^2T \nmax) \right)^{1/6} \\&+  
		4\Rt(\sqrt{\log (2p^2T)})
		\end{split}
		\end{equation}
\end{proof} 
\begin{lemma}\label{SGar}
	Let \ref{subGaussianVector} hold. Then 
	
	\begin{equation}
	\sup_x\abs{\Prob{\forall n \in \n : \infnorm{\Szn} \le \xn} -  \Prob{\forall n \in \n : \infnorm{\eta^n} \le \xn}}  \le \CA
	\left( F\log^7(2p^2T \nmax) \right)^{1/6}
	\end{equation}
	Where 
		\begin{equation}\label{etadef}
		\left(\begin{array}{cccc}
		\eta^1 &\eta^2 &... &\eta^{\abs{\n}} 
		\end{array}\right) \sim \N{0}{\trueSigmaY},
		\end{equation}
	
	\begin{equation}
	\trueSigmaY = \frac{1}{N} \sum_{i=1}^{N} \Var{Y_{\cdot i}},
	\end{equation}

	\begin{equation}\label{defF}
	F = \frac{1}{2\nmin}\left(\beta \log 2 \vee \frac{\sqrt{2}}{\sqrt{2} - 1} \gamma\right)^2 \vee \frac{1}{2\nmax}\left( {\frac{\nmax}{\nmin}}\right)^{1/3} M_3^2 \vee    \sqrt{\frac{1}{2\nmax\nmin}}   M_4^2
	\end{equation}
	with $\gamma$ defined by \eqref{gammadef}, $\beta$ by \eqref{betadef} and $Y$ by \eqref{BFM111} and an independent constant $\CA$ .
	
\end{lemma}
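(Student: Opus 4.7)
\bigskip

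\noindent\textbf{Proof plan for Lemma \ref{SGar}.}

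The plan is to reduce the joint distribution of $\{\infnorm{\Szn}\}_{n\in\n}$ to the distribution of a maximum of a normalized sum of $N$ independent high-dimensional random vectors, and then invoke the Chernozhukov--Chetverikov--Kato high-dimensional CLT for maxima (the same one cited in Section \ref{mainressec} under \cite{Chernozhukov2014, chernozhukov2013}). The rate $(F\log^7(\cdot))^{1/6}$ is precisely the rate produced by that CLT, so once the reduction is in place the result is almost immediate.

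First I would construct i.i.d.\ vectors $Y_{\cdot i}\in\R^D$, $i=1,\dots,N$, such that
\begin{equation}
\Sz \;=\; \frac{1}{\sqrt{2\nmax}}\sum_{i=1}^{N} Y_{\cdot i},
\end{equation}
where $\Sz$ denotes the vector obtained by stacking $\Szn$ over $n\in\n$. Concretely, index the coordinates of $Y_{\cdot i}$ by tuples $(n,t,u,v)$ with $n\in\n$, $t\in\Tn$ and $(u,v)\in\{1,\dots,p\}^2$, and set
\begin{equation}
(Y_{\cdot i})_{n,t,u,v} \;=\; \sqrt{\tfrac{\nmax}{n}}\,\sigma_{uv}^{-1}\,\bar Z_{i,uv}\bigl(\mathbf{1}_{i\in\Il}-\mathbf{1}_{i\in\Ir}\bigr).
\end{equation}
Independence of the $Y_{\cdot i}$ inherits from independence of the $X_i$, and $\Var{\,\tfrac{1}{\sqrt{2\nmax}}\sum Y_{\cdot i}\,}$ coincides with $\trueSigmaY$ by construction. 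Next, reduce $\infnorm{\Szn}\le x_n$ to a max--event by appending the negated coordinates: the joint event $\{\forall n:\infnorm{\Szn}\le x_n\}$ becomes $\{\max_{j\in J_n}(\Sz)_j\le x_n,\ \forall n\}$, which is a hyperrectangle in the enlarged coordinate system.

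Now I would apply the hyperrectangle version of the CCK Gaussian approximation. The three terms comprising $F$ match the three standard conditions in that theorem: (i) the exponential--moment term $\tfrac{1}{2\nmin}(\beta\log 2 \vee \tfrac{\sqrt 2}{\sqrt 2 - 1}\gamma)^2$ encodes a sub--exponential tail bound on $\sigma_{uv}^{-1}\bar Z_{i,uv}$, which I would derive from \ref{subGaussianVector} combined with Bernstein--type tail estimates (using the quantities $\beta,\gamma$ flagged in \eqref{defF}); (ii) the term $\tfrac{1}{2\nmax}(\nmax/\nmin)^{1/3}M_3^2$ is exactly the $\E{|Y_{ij}|^3}$ contribution after the CLT normalization, where the $(\nmax/\nmin)^{1/3}$ factor compensates for the worst--case rescaling $\sqrt{\nmax/n}$; (iii) the term $\sqrt{1/(2\nmax\nmin)}\,M_4^2$ absorbs $\E{|Y_{ij}|^4}$ in the same way. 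Third and fourth moments of the underlying $\sigma_{uv}^{-1}Z_{i,uv}$ are already bounded in the paper by \ref{thirdAndFourthMomentBounds} and \ref{ZconcentrationComponentWise}, which I would simply invoke.

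The main technical obstacle, and the only part that is not a direct quotation, is the bookkeeping of the heterogeneous scalings $\sqrt{\nmax/n}$ across different windows together with the $\pm 1$ signs from $\Il,\Ir$: one must check that every coordinate of every $Y_{\cdot i}$ satisfies the moment bounds with constants given by $M_3,M_4,\beta,\gamma$ uniformly, while the effective dimension passed into the $\log^7$ factor grows only as $p^2T\nmax$ (so that the $\log^7(p^2T\nmax)$ in the final bound is correct). Once this uniform moment control is in hand, the CCK theorem delivers the claim with the absolute constant $\CA$, and since the approximating Gaussian has covariance $\trueSigmaY$ by the very definition of $\eta$ in \eqref{etadef}, the right--hand side of the approximation is $\Prob{\forall n:\infnorm{\eta^n}\le x_n}$ as required.
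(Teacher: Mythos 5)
Your plan is correct in spirit but the decomposition is genuinely different from the paper's, and you need one more observation to recover the lemma exactly as stated. You construct $N$ independent (but sparse, mostly-zero) vectors $Y_{\cdot i}$, one per sample, with $\Sz = \frac{1}{\sqrt{2\nmax}}\sum_{i=1}^N Y_{\cdot i}$. The paper instead builds a matrix $Y$ (equations \eqref{BFM} and \eqref{BFM111}) with only $2\nmax$ \emph{dense} and independent columns by grouping sample $i$ into column $i\bmod 2\nmax$, so that $\Sz = \frac{1}{\sqrt{2\nmax}}\sum_{l=1}^{2\nmax}Y_{\cdot l}$, and applies \ref{generalGAR} with $2\nmax$ summands. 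With your decomposition you have $N$ summands, so to fit CCK's $\frac{1}{\sqrt N}\sum$ normalization you must multiply each $Y_{\cdot i}$ by $\sqrt{N/(2\nmax)}$; that rescaling propagates into $G_n$. It turns out the factors of $N$ cancel in the exponential-moment term (you get $G_n^2/N\gtrsim B^2/(2\nmin)$, matching the paper's $\tfrac{1}{2\nmin}(\beta\log 2\vee\tfrac{\sqrt 2}{\sqrt 2-1}\gamma)^2$), while your third- and fourth-moment contributions come out as $M_3^2/(N^{2/3}(2\nmin)^{1/3})$ and $M_4^2/\sqrt{2\nmin N}$, which are \emph{smaller} than the paper's $\nmax$-based terms under $N>2\nmax$, so they remain dominated by the stated $F$. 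Two small corrections: (i) the $\log^7$ argument you get is $p^2TN$, not $p^2T\nmax$, since you have $N$ summands; since $T\asymp |\n|N$ this only costs a universal constant that can be absorbed into $\CA$, but it is worth saying explicitly. (ii) You should note that the CCK condition \eqref{garass1} holds with $b=1$ for your sparse vectors because the zero contributions are exactly compensated by the $\sqrt{N/(2n)}$ rescaling of the $2n$ nonzero coordinates, giving $\tfrac{1}{N}\sum_i\E{\tilde Y_{ij}^2}=1$ for every coordinate $j$. With these checks, your route also proves the lemma; the paper's phase-grouping into $2\nmax$ columns is what makes $F$ and the $\log$ argument come out in the exact form displayed.
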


\begin{proof}
	Consider a matrix $Y_n$ with $2\nmax$ columns
	
		\begin{equation} \label{BFM}
		\begin{split}
		Y_n^T&\coloneqq \sqrt{\frac{\nmax}{n}} \times \\
&		\left( \begin{array}{cccccc}
	\Znormalized_1 & O & ... & O & -\Znormalized_{2\nmax+1} & ... \\
	\Znormalized_2 & \Znormalized_2 & ... & ... & ... & ... \\
	... & \Znormalized_3 & ... & ... & ... & ... \\
	\Znormalized_n & ... & ... & ... & ... & ... \\
	-\Znormalized_{n+1} & \Znormalized_{n+1} & ... & ... & ... & ... \\
	-\Znormalized_{n+2} & -\Znormalized_{n+2} & ... & ... & ... & ... \\
	... & -\Znormalized_{n+3} & ... & O & ... & ... \\
	-\Znormalized_{2n} & ... & ... & \Znormalized_{2\nmax-2n+1} & O & ... \\
	O & -\Znormalized_{2n+1} & ... & \Znormalized_{2\nmax-2n+2} & \Znormalized_{2\nmax-2n+2} & ... \\
	O & O & ... & ... & ... & ... \\
	... & ... & ... & -\Znormalized_{2\nmax - 1} & -\Znormalized_{2\nmax - 1} & ... \\
	O & O & ... & -\Znormalized_{2\nmax} & -\Znormalized_{2\nmax} & ...  \\\end{array} \right)
		\end{split}
		\end{equation}
	where $\Znormalized_i \coloneqq (\inv{S} \Zv_i)^T$.
	Clearly, columns of the matrix are independent and 
	
	\begin{equation}
	\Szn = \frac{1}{\sqrt{2\nmax}} \sum_{l=0}^{2\nmax} (Y_n)_{\cdot l}
	\end{equation}
	Next define a block matrix composed of $Y_n$ matrices:
	
	\begin{equation}\label{BFM111}
	Y \coloneqq \left(\begin{array}{c}
	Y_1 \\ \hline
	Y_2 \\ \hline
	... \\ \hline
	Y_{\numn} 
	\end{array}\right)
	\end{equation}
	Clearly vectors $Y_{\cdot l}$ are independent and 
	
	\begin{equation}
	\Sz = \frac{1}{\sqrt{2\nmax}}\sum_{l=0}^{2\nmax} Y_{\cdot l}
	\end{equation} 
	In order to complete the proof we make use of \ref{generalGAR}. 
	Denote 
	
	\begin{equation}\label{Bndef}
	B_{\nmax} = \sqrt{\frac{\nmax}{ \nmin}}\left(\beta \log 2 \vee \frac{\sqrt{2}}{\sqrt{2} - 1} \gamma\right) \vee \left( {\frac{\nmax}{ \nmin}}\right)^{1/6} M_3 \vee  \left({\frac{\nmax}{ \nmin}}\right)^{1/4} M_4
	\end{equation}
	
	By means of \ref{ZconcentrationComponentWise} one shows that the assumptions of \ref{expmoment} hold for components of $\Znormalized_i$ with 
	 
	\begin{equation} \label{gammadef}
	\gamma \coloneqq 12L^2 \sqrt{d}\maxLambda{\trueTheta} \infnorm{\trueTheta}   \infnorm{\inv{S}}
	\end{equation}
	
	\begin{equation} \label{betadef}
	\beta \coloneqq \left(\frac{9}{2}L^2 \sqrt{d}\maxLambda{\trueTheta}  + 1\right) \infnorm{\trueTheta}  \infnorm{\inv{S}}
	\end{equation}
	where $\maxLambda{\trueTheta}$ denotes the maximal eigen value of $\trueTheta$.
	Therefore condition \eqref{garassumption3} holds with $B_n$ defined by equation \eqref{Bndef}.

	\begin{equation}
	\frac{1}{N} \sum_{i=1}^{N} \E{(Y_{ij}^{n})^2} \ge  \min_j \Var{\Znormalized_{1j}}  = 1
	\end{equation}
	
	Hence, \ref{garass1} is fulfilled with $b = 1$. 
	Next notice that for some $k$-th component of $\Znormalized_i$ and central point $t$ (both defined by $j$):
	
	\begin{equation}
	\begin{split}
	\frac{1}{2\nmax} \sum_{i=1}^{2\nmax} \E{\abs{Y_{ij}^n}^3} & =  \frac{1}{2\nmax} \sum_{i \in \Il \cup\Ir }  \E{\left(\sqrt{\frac{\nmax}{n}} \abs{\Znormalized_{ik}}\right)^3 } \\
	& = \frac{1}{2\nmax} \sum_{i \in \Il \cup\Ir } \left(\frac{\nmax}{n}\right)^{3/2} \E{\abs{\Znormalized_{ik}}^3} \\
	& = \frac{2n}{2\nmax} \left(\frac{\nmax}{n}\right)^{3/2} \E{\abs{\Znormalized_{ik}}^3} \\
	& = \sqrt{\frac{\nmax}{n}} \E{\abs{\Znormalized_{ik}}^3}\\
	&\le \sqrt{\frac{\nmax}{\nmin}} M_3^3
	\end{split}
	\end{equation}
	and in the same way:
	
	\begin{equation}
	\frac{1}{2\nmax} \sum_{i=1}^{N} \E{\abs{Y_{ij}^n}^4} \le \frac{\nmax}{ \nmin} M_4^4
	\end{equation}
	Therefore \ref{garassumptions} holds with $B_{\nmax}$
	so \ref{generalGAR} applies here and provides us with the claimed bound. Moreover, 
	$\CA$ depends only on $b$ which equals one which implies that the constant $\CA$ depends on nothing.

\end{proof}
\begin{lemma} \label{expmoment}
	Consider a random variable $\xi$. Suppose the following bound holds $\forall x \ge 0$: 
	
	\begin{equation}
	\Prob{\abs{\xi}\ge \gamma x + \beta}  \le \ex{-x}
	\end{equation}
	Then 
	
	\begin{equation}
	\E{\exp\left( \frac{\abs{\xi}}{B} \right)} \le 2
	\end{equation}
	for
	 
	\begin{equation}
	B = \beta \log 2 \vee \frac{\sqrt{2}}{\sqrt{2} - 1} \gamma
	\end{equation}
	 
\end{lemma}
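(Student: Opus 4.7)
The plan is to use the layer-cake representation of the expected value and then split the tail integral at the point where the hypothesis starts to bite.

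First I would write
\begin{equation}
\E{\ex{\abs{\xi}/B}} = 1 + \int_0^\infty \ex{s} \Prob{\abs{\xi} > Bs}\, ds,
\end{equation}
which follows from $\E{Y} = \int_0^\infty \Prob{Y>t}\, dt$ applied to $Y = e^{|\xi|/B} \ge 1$ and the substitution $t = e^s$. Then I would split the integration range at the threshold $s_0 \coloneqq \beta/B$, because the hypothesis $\Prob{\abs{\xi} \ge \gamma x + \beta} \le \ex{-x}$ only applies for $x \ge 0$, i.e.\ for $Bs \ge \beta$. On $[0,s_0]$ I would use the trivial bound $\Prob{\abs{\xi} > Bs} \le 1$, and on $(s_0,\infty)$ I would set $x = (Bs-\beta)/\gamma \ge 0$ and plug in to get $\Prob{\abs{\xi} > Bs} \le \ex{-(Bs-\beta)/\gamma}$.

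Next I would evaluate both pieces in closed form. The first piece contributes $\ex{\beta/B}-1$. The second piece, provided $B>\gamma$ so the exponent $1 - B/\gamma$ is negative and the integral converges, contributes $\frac{\gamma}{B-\gamma}\ex{\beta/B}$. Combining, I get the clean bound
\begin{equation}
\E{\ex{\abs{\xi}/B}} \le \ex{\beta/B}\cdot\frac{B}{B-\gamma}.
\end{equation}

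Finally I would choose $B$ large enough to make the right-hand side at most $2$, by ensuring each of the two factors is at most $\sqrt{2}$. The factor $B/(B-\gamma) \le \sqrt{2}$ holds iff $B \ge \frac{\sqrt{2}}{\sqrt{2}-1}\gamma$, which is exactly the second clause in the definition of $B$. The factor $\ex{\beta/B} \le \sqrt{2}$ holds iff $B \ge 2\beta/\log 2$, which is ensured by the first clause controlling $B$ in terms of $\beta$. Taking the maximum of the two lower bounds then yields $\E{\ex{\abs{\xi}/B}} \le \sqrt{2}\cdot\sqrt{2} = 2$, as required.

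There is no real obstacle here; the only delicate point is recognizing that convergence of the second integral forces $B$ to exceed $\gamma$, which is precisely why the definition of $B$ includes a multiple of $\gamma$ strictly larger than $\gamma$ itself. The proof is then a direct computation once the integral is split at $s_0 = \beta/B$.
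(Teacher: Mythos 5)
Your layer-cake derivation is mathematically equivalent to the paper's integration-by-parts step, and both arrive at the same closed-form bound $\E{\ex{\abs{\xi}/B}} \le \frac{B}{B-\gamma}\,\ex{\beta/B}$, so the approach matches the paper in substance. The calculation up to this point is correct.

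The gap is in the final step. You correctly observe that $\ex{\beta/B} \le \sqrt{2}$ requires $B \ge 2\beta/\log 2$, but then assert that this "is ensured by the first clause" $B \ge \beta \log 2$. That implication is false: $\beta\log 2 < 2\beta/\log 2$ because $(\log 2)^2 < 2$. So the stated $B$ does not guarantee the needed bound on the $\beta$-factor, and the argument does not close. In fact the lemma is false with the constant as written: take $\gamma = 0$, so that the hypothesis forces $\abs{\xi} < \beta$ almost surely; with $B = \beta\log 2$ one gets $\E{\ex{\abs{\xi}/B}} \le \ex{1/\log 2} \approx 4.23 > 2$. The paper's own proof suffers from the same issue (it asserts the final "$\le 2$" without checking). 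The resolution is that the $\beta$-clause in the definition of $B$ should read $\frac{2\beta}{\log 2}$ rather than $\beta\log 2$; with that correction both your argument and the paper's go through.
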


\begin{proof}
	
	Integration by parts yields
		
	\begin{equation}
	\E{\exp\left( \frac{\abs{\xi}}{B} \right)} \le \exp\left(\frac{\beta}{B}\right) + \frac{\gamma}{B} \int_{0}^{+\infty} \exp \left(\frac{\gamma x + \beta}{B}\right) \ex{-x} dx
	\end{equation}
	
	\begin{equation}
			\int_{0}^{+\infty} \exp \left(\frac{\gamma x + \beta}{B}\right) \ex{-x} dx 
			 = \frac{B}{B - \gamma } \exp\left(\frac{\beta}{B}\right) 
	\end{equation}

	\begin{equation}\begin{split}
	\E{\exp\left( \frac{\abs{\xi}}{B} \right)} & \le \frac{B}{B - \gamma } \exp\left(\frac{\beta}{B}\right) \\
	& \le 2
	\end{split}
	\end{equation}
	
\end{proof}

By the same technique the following lemma can be proven

\begin{lemma}\label{thirdAndFourthMomentBounds}
	Under assumptions of \ref{expmoment}
	
	\begin{equation}
		\E{\abs{\xi}^3} \le \beta^3 + 3\gamma\beta^2 + 6 \beta\gamma^2+2\gamma^3,
	\end{equation}
	
	\begin{equation}
		\E{\xi^4} \le \beta^4 + 4\gamma\beta^3 + 12 \beta^2 \gamma^2 6\beta\gamma^3 + 24 \gamma^4.
	\end{equation}
	
\end{lemma}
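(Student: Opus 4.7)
The plan is to mimic the proof of the preceding Lemma (\ref{expmoment}), but instead of integrating the moment generating function we integrate powers. The starting point is the layer-cake representation
\begin{equation}
\E{|\xi|^k} = \int_0^\infty k s^{k-1} \Prob{|\xi| > s}\, ds,
\end{equation}
which I would split at $s=\beta$. On $[0,\beta]$ I would use the trivial bound $\Prob{|\xi|>s}\le 1$, contributing at most $\beta^k$. On $[\beta,\infty)$ the hypothesis of \ref{expmoment} applies with $x=(s-\beta)/\gamma\ge 0$, giving $\Prob{|\xi|>s}\le \exp\!\left(-(s-\beta)/\gamma\right)$.

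Next I would change variables $u=(s-\beta)/\gamma$, so that $s=\gamma u+\beta$ and $ds=\gamma\, du$, reducing the tail part to
\begin{equation}
k\gamma\int_0^\infty (\gamma u+\beta)^{k-1}\, e^{-u}\, du.
\end{equation}
Expanding the binomial $(\gamma u+\beta)^{k-1}=\sum_{j=0}^{k-1}\binom{k-1}{j}(\gamma u)^j \beta^{k-1-j}$ and using $\int_0^\infty u^j e^{-u}\, du=j!$ converts the integral into an explicit polynomial in $\beta$ and $\gamma$ with factorial/binomial coefficients. Specializing to $k=3$ and $k=4$ and adding the trivial contribution $\beta^k$ from $[0,\beta]$ produces the stated bounds.

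The only potentially delicate step is purely arithmetic: carefully combining $k\binom{k-1}{j} j!$ for $k\in\{3,4\}$ and all $j$, and checking the resulting coefficients match those in the statement. Everything else is essentially identical in spirit to the integration-by-parts trick already used in the proof of \ref{expmoment}, so no new probabilistic tool is required.
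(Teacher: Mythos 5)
Your approach is exactly the ``same technique'' the paper alludes to: the proof of the preceding lemma integrates the tail bound against $\frac{1}{B}e^{s/B}$ after splitting at $s=\beta$ and substituting $u=(s-\beta)/\gamma$, and you do the same with $ks^{k-1}$. The decomposition, the use of $\Prob{|\xi|>s}\le 1$ on $[0,\beta]$, and the evaluation of $\int_0^\infty u^j e^{-u}\,du=j!$ are all correct. However, you should not defer the ``purely arithmetic'' step: carrying it out gives
\begin{equation}
\E{|\xi|^3}\le \beta^3 + 3\gamma\beta^2 + 6\beta\gamma^2 + 6\gamma^3,
\qquad
\E{\xi^4}\le \beta^4 + 4\gamma\beta^3 + 12\beta^2\gamma^2 + 24\beta\gamma^3 + 24\gamma^4,
\end{equation}
which are precisely $\E{(\gamma X+\beta)^3}$ and $\E{(\gamma X+\beta)^4}$ for $X$ standard exponential. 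These do \emph{not} match the constants in the statement ($2\gamma^3$ instead of $6\gamma^3$, and what appears to be $6\beta\gamma^3$ instead of $24\beta\gamma^3$; the fourth-moment display is also missing a ``$+$'').

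This is not a flaw in your method but in the lemma as printed: your bounds are the correct ones. Indeed, with $\beta=0$ the hypothesis is satisfied with equality by $|\xi|=\gamma X$, $X\sim\mathrm{Exp}(1)$, for which $\E{|\xi|^3}=6\gamma^3>2\gamma^3$, so the stated third-moment inequality cannot hold in general. The constants you obtain are sharp (equality when $|\xi|\overset{d}{=}\gamma X+\beta$). So the substantive advice is: do the bookkeeping rather than asserting it ``produces the stated bounds,'' because it does not — it produces the correct bounds and exposes a typographical error in the lemma. Since the lemma is only used to bound $M_3^3$ and $M_4^4$ up to constants, the larger (correct) coefficients do not affect anything downstream.
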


\section{Gaussian approximation result for $\Ab_n$} \label{GaprAB}
\begin{lemma}\label{SbGarRandomBounds}
		
		\begin{equation}
		\sup_{\{\xn\}_{n\in \n} \subset \R }\abs{\Probboot{\forall n \in \n : \Ab \le \xn} - \Probboot{\forall n \in \n : \infnorm{\zeta^n} \le \xn}}  \le \hatCAb
		\left(F^\flat \log^7(2p^2T \nmax) \right)^{1/6}.
		\end{equation}
		Where 
		
		\begin{equation}
		\left(\begin{array}{cccc}
		\zeta^1 &\zeta^2 &... &\zeta^{\abs{\n}} 
		\end{array}\right) \sim \N{0}{\hatSigmaY},
		\end{equation}
		
		\begin{equation}
		\hatSigmaY = \frac{1}{N} \sum_{i=1}^{N} \Var{\Yb_{\cdot i}},
		\end{equation}

		\begin{equation}
		F^\flat = \left(\frac{1}{2\nmin\log^22}  \vee \frac{1}{2\nmax}\left({\frac{\nmax}{\nmin}}\right)^{1/3} \vee  \sqrt{\frac{1}{2\nmax\nmin}}  \right)\infnorm{\inv{S}}^2 (M^\flat)^2
		\end{equation}
		
		\begin{equation}
		M^\flat = \max_{i \in \Istable} \infnorm{\hat Z_i} 
		\end{equation}	
		$\Ynb$ are defined by \eqref{BFMbootstrap}, and $\hatCAb$ depends only on $\min_{1 \le k \le p} (\hatSigmaY)_{kk}$
		
\end{lemma}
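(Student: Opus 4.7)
The plan is to replicate the proof of \ref{SGar} under the bootstrap measure $\mathbb{P}^\flat$, exploiting the fact that, conditional on $\{X_j\}_{j \in \Istable}$, the random vectors $\Zbi$ are bounded by $M^\flat$ component-wise. This lets us replace the Sub-Gaussian moment machinery of \ref{SGar} with simple deterministic bounds.

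First, construct a bootstrap analog $\Ynb$ of the block matrix $Y^n$ in \eqref{BFM} by substituting $\Zbnormalized_i \coloneqq (\inv{S}\Zbi)^T$ in place of $\Znormalized_i$, and stack these over $n \in \n$ to form $\Yb$ exactly as in \eqref{BFM111}. Because the resampling indices $\{\varkappa_i\}$ are i.i.d. under $\mathbb{P}^\flat$, the columns $\Yb_{\cdot l}$ are conditionally independent and
\begin{equation}
\Sbz = \frac{1}{\sqrt{2\nmax}} \sum_{l=1}^{2\nmax} \Yb_{\cdot l}.
\end{equation}
It therefore suffices to apply the general Gaussian approximation \ref{generalGAR} to this sum, conditionally on the data.

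Second, verify the hypotheses of \ref{generalGAR} in the conditional setting. The variance condition \ref{garass1} is satisfied with $b = \min_{1 \le k \le p}(\hatSigmaY)_{kk}$, which is precisely the quantity that $\hatCAb$ is allowed to depend on. For the 3rd and 4th moment conditions \ref{garassumptions}, observe that $\mathbb{P}^\flat$-almost surely
\begin{equation}
\infnorm{\inv{S}\Zbi} \le \infnorm{\inv{S}}\, M^\flat,
\end{equation}
so every coordinate of $\Ynb$ is almost surely bounded by $\sqrt{\nmax/\nmin}\,\infnorm{\inv{S}}\,M^\flat$. Carrying out the moment computation at the end of \ref{SGar}'s proof with this deterministic bound in place of the Sub-Gaussian input yields conditional 3rd and 4th moment bounds of the form $(\nmax/\nmin)^{1/2}(\infnorm{\inv{S}}M^\flat)^3$ and $(\nmax/\nmin)(\infnorm{\inv{S}}M^\flat)^4$.

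Third, for the exponential moment condition \eqref{garassumption3}, the same almost-sure bound gives $\mathbb{P}^\flat\{|\Yb_{ij}| \ge \beta\} = 0$ with $\beta = \sqrt{\nmax/\nmin}\,\infnorm{\inv{S}}\,M^\flat$, and \ref{expmoment} applied with $\gamma = 0$ produces an exponential-moment bound with scale $B^\flat \asymp \beta/\log 2$. Assembling the three contributions into the maximum appearing in the definition of $F$ in \eqref{defF}, but with $\beta$ and the moment bounds as above (and $\gamma = 0$), yields precisely the stated $F^\flat$. The only genuine subtlety, and hence the main thing to keep track of, is that every invocation of \ref{generalGAR} is conditional on the data, so the ``universal constant'' in its bound depends on the (random) minimal diagonal entry $\min_k (\hatSigmaY)_{kk}$; this dependence is exactly why $\hatCAb$ is declared to depend on that quantity rather than being absolute, as $\CA$ was in \ref{SGar}.
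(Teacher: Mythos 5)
Your proof is correct and follows essentially the same route as the paper: build the bootstrap block matrix $\Yb$ from $\Zbnormalized_i$ exactly as $Y$ was built from $\Znormalized_i$ in \ref{SGar}, note conditional independence of the columns, bound the conditional moments by the deterministic quantity $\infnorm{\inv{S}}M^\flat$, and invoke \ref{generalGAR}. You actually fill in two details the paper's proof leaves implicit — verifying \ref{garass1} with $b=\min_k(\hatSigmaY)_{kk}$ (which is precisely why $\hatCAb$ is random rather than absolute) and checking the exponential-moment condition via \ref{expmoment} with $\gamma=0$, correctly arriving at $B^\flat\asymp\beta/\log 2$ as derived in the proof of \ref{expmoment} (and matching the $1/\log^2 2$ factor in $F^\flat$), whereas the paper's proof simply displays the third and fourth moment bounds and then says ``apply \ref{generalGAR}.''
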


\begin{proof}
	Denote the term under the sign of $\infnorm{\cdot}$ in \eqref{bootAnDef} as $\Snbz$
	
	\begin{equation}
	\Snbz \coloneqq \frac{1}{\sqrt{2n}} \left( \sum_{i \in \Il} \Zbnormalized_i  - \sum_{i \in \Ir} \Zbnormalized_i \right)^T 
	\end{equation}
where $\Zbnormalized_i \coloneqq (\inv{S} \overline{Z^{\flat}_i})^T $ and let $\Sbz$ be a vector composed of stacked vectors $\Snbz$ for all $n \in \n$.

Consider a matrix 

\begin{equation}   \label{BFMbootstrap}
\begin{split}
(\Yb_n)^T&\coloneqq \sqrt{\frac{\nmax}{n}} \times\\ 
&\left( \begin{array}{cccccc}
\Zbnormalized_1 & O & ... & O & -\Zbnormalized_{2\nmax+1} & ... \\
\Zbnormalized_2 & \Zbnormalized_2 & ... & ... & ... & ... \\
... & \Zbnormalized_3 & ... & ... & ... & ... \\
\Zbnormalized_n & ... & ... & ... & ... & ... \\
-\Zbnormalized_{n+1} & \Zbnormalized_{n+1} & ... & ... & ... & ... \\
-\Zbnormalized_{n+2} & -\Zbnormalized_{n+2} & ... & ... & ... & ... \\
... & -\Zbnormalized_{n+3} & ... & O & ... & ... \\
-\Zbnormalized_{2n} & ... & ... & \Zbnormalized_{2\nmax-2n+1} & O & ... \\
O & -\Zbnormalized_{2n+1} & ... & \Zbnormalized_{2\nmax-2n+2} & \Zbnormalized_{2\nmax-2n+2} & ... \\
O & O & ... & ... & ... & ... \\
... & ... & ... & -\Zbnormalized_{2\nmax - 1} & -\Zbnormalized_{2\nmax - 1} & ... \\
O & O & ... & -\Zbnormalized_{2\nmax} & -\Zbnormalized_{2\nmax} & ... \\
\end{array} \right)
\end{split}
\end{equation}
which is a bootstrap counterpart of $Y_n$ from the proof of \ref{SGar} and construct a block  matrix $\Yb$ :

\begin{equation}
\Yb = \left(\begin{array}{c}
\Yb_1 \\ \hline
\Yb_2 \\ \hline
... \\ \hline
\Yb_{\numn} 
\end{array} \right)
\end{equation}
Clearly vectors $\Yb_{\cdot l}$ are independent and 

\begin{equation}
\Sbz = \frac{1}{\sqrt{2\nmax}}\sum_{l=0}^N \Yb_{\cdot l}
\end{equation} 
Now notice 

\begin{equation}
\frac{1}{2\nmax} \sum_{i=1}^{N} \E{\abs{Y_{ij}}^3} \le \sqrt{\frac{\nmax}{\nmin}} \max_{i \in \Istable} \infnorm{\hat Z_i}^3\infnorm{\inv{S}}^3
\end{equation}

\begin{equation}
\frac{1}{2\nmax} \sum_{i=1}^{N} \E{\abs{Y_{ij}}^4} \le \frac{\nmax}{\nmin} \max_{i \in \Istable}   \infnorm{\hat Z_i}^4\infnorm{\inv{S}}^4
\end{equation}
And finally apply \ref{generalGAR}.

\end{proof}

\begin{lemma} \label{bootstrapGarLemma}
		Let $\hatTheta$ denote an estimator of $\trueTheta$ s.t. for some positive $r$
		
		\begin{equation}
		\infnorm{\trueTheta - \hatTheta} < r
		\end{equation}
		and $\trueTheta_{ij} = 0 \Rightarrow \hatTheta_{ij}=0$,
		furthermore, let $\deltaY(r) < 1/2$,
		also suppose \ref{subGaussianVector} holds. 	
	Then 	at least with probability $1-\probMoment{x}{t} - \probsigmay$
	
	\begin{equation}
			\sup_{\{\xn\}_{n\in \n} \subset \R }\abs{\Probboot{\forall n \in \n : \Ab \le \xn} - \Probboot{\forall n \in \n : \infnorm{\zeta^n} \le \xn}}   \le R_{A^b}  \coloneqq
		\CAb\left(\hat F \log^7(2p^2T \nmax) \right)^{1/6} 
	\end{equation}
	where 
	
	\begin{equation}
	\hat F = \left(\frac{1}{2\nmin\log^22}  \vee \frac{1}{2\nmax}\left({\frac{\nmax}{\nmin}}\right)^{1/3} \vee  \sqrt{\frac{1}{2\nmax\nmin}}  \right) \infnorm{\inv{S}}^2 (C^\flat)^2
	\end{equation}
	
	\begin{equation}
	C^{\flat} \coloneqq \Zbound{s}{x} + (3(dx)^2 + 1) r
	\end{equation}
	and constant $\CAb$ depends only on	 $\deltaY$.
	
\end{lemma}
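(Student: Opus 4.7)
The strategy is to bootstrap on \ref{SbGarRandomBounds}, which already delivers the desired Gaussian approximation with constants expressed in terms of the \emph{data-dependent} quantity $M^\flat = \max_{i \in \Istable} \infnorm{\hat Z_i}$ and the coupling constant $\hatCAb$, which depended on $\min_k(\hatSigmaY)_{kk}$. My job reduces to two high-probability replacements:
(i) bound $M^\flat$ by the deterministic quantity $C^\flat = \Zbound{s}{x} + (3(dx)^2+1)r$,
(ii) replace $\hatCAb$ by a constant depending only on $\deltaY$.

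For (i) I would split $\hat Z_{i,uv} = Z_{i,uv} + (\hat Z_{i,uv} - Z_{i,uv})$ and use the algebraic expansion
\begin{equation}
\hat Z_{i,uv} - Z_{i,uv} = (\bootTheta_u - \trueTheta_u) X_i\, \trueTheta_v X_i + \trueTheta_u X_i\, (\bootTheta_v - \trueTheta_v) X_i + (\bootTheta_u - \trueTheta_u) X_i\, (\bootTheta_v - \trueTheta_v) X_i - (\bootTheta_{uv} - \trueTheta_{uv}).
\end{equation}
Since $\trueTheta_{ij}=0 \Rightarrow \bootTheta_{ij}=0$, every row $\bootTheta_u - \trueTheta_u$ has at most $d$ non-zeros, each of magnitude at most $r$. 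Combining this sparsity with an event that controls $\max_{i \in \Istable}\infnorm{X_i}$ (which is the source of the auxiliary parameter $x$), the three products can be bounded uniformly by quantities of order $d^2 x^2 r$, yielding the $(3(dx)^2+1)r$ offset. The leading term $\max_{i\in\Istable}\infnorm{Z_i} \le \Zbound{s}{x}$ is available on the event whose failure probability is $\probMoment{x}{t}$. Thus $M^\flat \le C^\flat$ on that event.

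For (ii) I invoke \ref{twomaticies} to conclude $\infnorm{\hatSigmaY - \trueSigmaY} \le \deltaY$ on an event of probability at least $1 - \probsigmay$. Because of the normalization by $S$ in the definitions of $\Znormalized_i$ and $\Zbnormalized_i$, the diagonal entries of $\trueSigmaY$ equal $1$, so the assumption $\deltaY < 1/2$ forces $\min_k(\hatSigmaY)_{kk} \ge 1/2$. According to \ref{SbGarRandomBounds} the constant $\hatCAb$ depends only on this minimum, hence is bounded by a constant $\CAb$ determined solely by $\deltaY$.

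Intersecting the two events above gives probability at least $1 - \probMoment{x}{t} - \probsigmay$, and on that intersection \ref{SbGarRandomBounds} together with the replacements $M^\flat \mapsto C^\flat$ and $\hatCAb \mapsto \CAb$ delivers the claimed bound $R_{A^\flat}$. The only delicate point is making sure the Gaussian approximation of \ref{SbGarRandomBounds}, which is stated conditionally on the data through $\Probboot{\cdot}$, remains valid after conditioning on the two auxiliary events --- but those events are measurable with respect to the original sample (not the bootstrap weights), so they enter only as high-probability guarantees on the values of $M^\flat$ and $\hatSigmaY$, and the conditional approximation goes through verbatim. The computation behind the product-expansion bound in (i) is the main technical obstacle, as one must be careful to use the sparsity pattern preservation in each of the three cross terms rather than a naive $\ell_1$ bound on $\bootTheta - \trueTheta$.
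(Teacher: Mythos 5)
Your proposal follows exactly the same route as the paper's proof of this lemma. The paper's proof reads, in its entirety: apply \ref{SbGarRandomBounds}, then apply \ref{MomendBoundLemma} to ensure $C^\flat \ge M^\flat$ with probability at least $1-\probMoment{x}{t}$, then apply \ref{twomaticies} to get $\infnorm{\trueSigmaY - \hatSigmaY}\le\deltaY$ with probability at least $1-\probsigmay$, which implies the existence of a deterministic $\CAb > \hatCAb$. Your step (i) re-derives inline what \ref{MomendBoundLemma} and \ref{zmistie} deliver (your three-term algebraic expansion of $\hat Z_{i,uv} - Z_{i,uv}$ is equivalent to the telescoping two-term split the paper uses in \ref{zmistie}, and the auxiliary parameter $x$ plays the identical role of controlling $\max_{i\in\Istable}\abs{\alpha^T X_i}$), and your step (ii) is precisely the paper's application of \ref{twomaticies}. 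Your closing remark that the two favorable events are measurable with respect to the original sample rather than the bootstrap weights, so that the conditional Gaussian approximation in \ref{SbGarRandomBounds} passes through unchanged, is a legitimate point of care that the paper leaves implicit; it is correct and does not change the structure of the argument.
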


\begin{proof}
	The proof consists in subsequently applying \ref{SbGarRandomBounds} and \ref{MomendBoundLemma} ensuring $C^\flat \ge M^\flat = \max_{i \in \Istable} \infnorm{\hat Z_i} $ with probability at least $ 1- \probMoment{x}{t}$ and applying \ref{twomaticies} providing that $\infnorm{\trueSigmaY - \hatSigmaY} \le \deltaY \le 1 = \min_{1 \le k \le p} (\trueSigmaY)_{kk}$ with probability at least $1-\probsigmay$ which implies the existence of a deterministic constant $\CAb > \hatCAb$.
\end{proof}

\begin{lemma} \label{MomendBoundLemma}
	Let $\hatTheta$ denote an estimator of $\trueTheta$ s.t. for some positive $r$
	
	\begin{equation}
	\infnorm{\trueTheta - \hatTheta} < r
	\end{equation}
	and $\trueTheta_{ij} = 0 \Rightarrow \hatTheta_{ij}=0$.
	Also let \ref{subGaussianVector} hold.
	Then with probability at least $1-\probMoment{x}{t} $
	
	\begin{equation} \label{mexp}
	M^\flat  \le \Zbound{s}{x} + \deltaZ
	\end{equation}
	where $\probMoment{x}{x} \coloneqq \probzbound{s}{x} + \probXBound$.
	
\end{lemma}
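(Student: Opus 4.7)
The plan is to relate the plug-in quantity $\hat Z_{i,uv} = \hatTheta_u X_i\hatTheta_v X_i - \hatTheta_{uv}$ to its oracle analogue $\Ziuv = \trueTheta_u X_i\trueTheta_v X_i - \trueTheta_{uv}$ and then bound the discrepancy using the assumed componentwise closeness $\infnorm{\trueTheta - \hatTheta} < r$ and the inherited sparsity. I would introduce two high-probability events: $\mathcal{E}_1 \coloneqq \{\max_{i \in \Istable} \infnorm{Z_i} \le \Zbound{s}{x}\}$, whose complement has probability at most $\probzbound{s}{x}$ from the standing concentration bound on $Z_i$; and $\mathcal{E}_2 \coloneqq \{\max_{i \in \Istable} \infnorm{X_i} \le x\}$, whose complement has probability at most $\probXBound$ by \ref{subGaussianVector} together with a union bound over the $ps$ coordinates. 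A union bound then makes $\mathcal{E}_1 \cap \mathcal{E}_2$ hold with probability at least $1 - \probMoment{x}{t}$.

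On $\mathcal{E}_1 \cap \mathcal{E}_2$ I would use the algebraic identity
\begin{equation}
\hat Z_{i,uv} - \Ziuv = (\hatTheta_u - \trueTheta_u) X_i \, \trueTheta_v X_i + \hatTheta_u X_i \, (\hatTheta_v - \trueTheta_v) X_i - (\hatTheta_{uv} - \trueTheta_{uv}),
\end{equation}
which is just the standard $ab - a'b' = (a - a')b' + a(b - b')$ decomposition. The sparsity constraint $\trueTheta_{ij} = 0 \Rightarrow \hatTheta_{ij} = 0$ transports the row-sparsity of $\trueTheta$ to both $\hatTheta$ and $\hatTheta - \trueTheta$, so each such row has at most $d$ nonzero entries; in the case of the difference each entry is bounded by $r$. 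H\"older's inequality then gives $|(\hatTheta_u - \trueTheta_u) X_i| \le d r \, \infnorm{X_i} \le d r x$ on $\mathcal{E}_2$, and the analogous one-norm times infinity-norm bound controls $|\trueTheta_u X_i|$ and $|\hatTheta_u X_i|$ by a multiple of $dx$. Multiplying the pieces, the two cross-terms each contribute at most $(dx)^2 r$ and the quadratic term $(\hatTheta_u - \trueTheta_u)X_i (\hatTheta_v - \trueTheta_v) X_i$ at most $(dr x)^2$; collecting constants yields $\infnorm{\hat Z_i - Z_i} \le (3(dx)^2 + 1)\, r = \deltaZ$, where the extra $+1$ absorbs $|\hatTheta_{uv} - \trueTheta_{uv}| \le r$.

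The triangle inequality $\infnorm{\hat Z_i} \le \infnorm{Z_i} + \infnorm{\hat Z_i - Z_i}$ together with the bounds on $\mathcal{E}_1 \cap \mathcal{E}_2$ then yields $M^\flat \le \Zbound{s}{x} + \deltaZ$, which is exactly \eqref{mexp}. Nothing here is deep; the only point requiring care is the constant bookkeeping in the cross-term estimates so that the bound matches the explicit form $(3(dx)^2+1)r$ used in the definition of $C^\flat$ in \ref{bootstrapGarLemma}, in particular making sure not to pick up an unwanted factor of $p$ when expanding $\trueTheta_u X_i$ and instead using row-sparsity of $\trueTheta$ itself.
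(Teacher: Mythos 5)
The overall skeleton---intersect a high-probability bound on $\max_{i\in\Istable}\infnorm{Z_i}$ with one on $\max_{i\in\Istable}\infnorm{\hat Z_i - Z_i}$, then apply a union bound and the triangle inequality---is exactly what the paper does, so the strategy is sound. But the paper delegates the mis-tie bound to Lemma~\ref{zmistie}, and your inline re-derivation does not actually reproduce the paper's $\deltaZ$.

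Concretely, Lemma~\ref{zmistie} defines $\deltaZ = 2rd^{3/2}x^2\infnorm{\trueTheta} + (rdx)^2$. The quantity $(3(dx)^2+1)r$ you arrive at is the additive piece of $C^\flat$ in \ref{bootstrapGarLemma}; you have conflated $C^\flat-\Zbound{s}{x}$ with $\deltaZ$, and your bound neither matches the $d^{3/2}$ scaling nor carries the explicit $\infnorm{\trueTheta}$ factor. The source of the discrepancy is your choice to condition on $\mathcal{E}_2=\{\max_{i\in\Istable}\infnorm{X_i}\le x\}$ and then use H\"older $\ell_1$--$\ell_\infty$ per row, giving $\abs{\trueTheta_v X_i}\le d\infnorm{\trueTheta}x$. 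The paper instead applies \ref{subGaussianVector} directly to the one-dimensional projections $\trueTheta_v X_i$ and $(\trueTheta_v-\hatTheta_v)X_i$, controlling each by $x$ times its Euclidean norm; since $\normtwo{\trueTheta_v}\le\sqrt{d}\,\infnorm{\trueTheta}$, this yields the $\sqrt d$ factor and hence the $d^{3/2}$ in the leading term. In addition, the probability accounting for $\mathcal{E}_2$ does not close: a union bound over the $p\cdot s$ coordinates needed to control $\infnorm{X_i}$ for all $i\in\Istable$ costs roughly $2ps\,\ex{-x^2/L^2}$, not $\probXBound = s\,\ex{-x^2/L^2}$, so the claimed tail for $\mathcal{E}_2$ is off by a factor of $p$. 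To match the lemma as stated you should simply cite Lemma~\ref{Zconcentration} for the first event and Lemma~\ref{zmistie} for the mis-tie (or reproduce the latter's projection-based argument); the rest of your write-up---the union bound and the triangle inequality $\infnorm{\hat Z_i}\le\infnorm{Z_i}+\infnorm{\hat Z_i-Z_i}$---is fine.
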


\begin{proof}
	Direct application of \ref{Zconcentration} yields 
	
	\begin{equation}
		\Prob{\forall i \in \Istable  : \infnorm{Z_i} \le \Zbound{s}{x}} \ge  1-\probzbound{s}{x}
	\end{equation}
	which in combination with the fact (provided by \ref{zmistie}) that $ \infnorm{\hat Z_i - Z_i} \le \deltaZ$ implies \eqref{mexp}.
	
\end{proof}

\section{$\hatSigmaY \approx \trueSigmaY$ } \label{twomatsec}
First of all, if $\trueSigmaZ \coloneqq \Var{\Zvi} \approx \Varboot{\overline{\Zbi}}$, then $\trueSigmaY \approx \hatSigmaY$ as well (\ref{twomaticies}).
The idea is to notice that 
\begin{equation}
\Var{\overline{\Zbi}} = \hatSigmahatZ \coloneqq \empE{\left(\Zhatvi - \empE{\Zhatvi}\right) \left(\Zhatvi - \empE{\Zhatvi}\right)^T} 
\end{equation}
due to the choice of the bootstrap scheme.
Next we show that $\trueSigmaZ  \approx \hatSigmaZ \coloneqq \empE{\left(\Zvi - \empE{\Zvi}\right) \left(\Zvi - \empE{\Zvi}\right)^T}$ (\ref{sigmaDelta0}) and finalize the proof by proving that $\hatSigmaZ \approx \hatSigmahatZ$ (\ref{ZCovConcentration}).

The results of this section rely on a lemma which is a trivial corollary of Lemma 6 by \cite{sara} providing the concentration result for the empirical covariance matrix 

\begin{lemma}
	\label{sigmaconcentration}
	Let \ref{subGaussianVector} hold for some $L > 0$. Then for any positive $\gamma$
	\begin{equation}
	\delta_n(\chi) \coloneqq 2L^2 \left(\frac{2 \log p + \chi}{n} + \sqrt{\frac{4\log p + 2\chi}{n}}\right)
	\end{equation}
	
	\begin{equation}
	\Prob{\infnorm{\hat{\Sigma} - \Sigma^*} \ge \delta_n(\gamma) } \le \probSigmaBound{n}{\gamma} \coloneqq 2\ex{-\chi}.
	\end{equation}
\end{lemma}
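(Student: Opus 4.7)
The plan is to treat this as a direct application of Lemma 6 from \cite{sara}, combined with a union bound over the $p^2$ entries of the empirical covariance matrix. Since \ref{subGaussianVector} is a sub-Gaussianity condition on the whole vector, in particular every coordinate $X_{i,u}$ (obtained by choosing $a = e_u$) is a univariate sub-Gaussian random variable with parameter proportional to $L$. Consequently each product $X_{i,u}X_{i,v}$ is sub-exponential, and the centered version $X_{i,u}X_{i,v} - \Sigma^*_{uv}$ has sub-exponential norm bounded in terms of $L^2$.

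First I would recall that for an average of $n$ independent centered sub-exponential variables with sub-exponential parameter at most $K = 2L^2$, the cited Lemma 6 (a Bernstein-type bound) gives for each fixed pair $(u,v)$ and each $\chi>0$
\begin{equation}
\Prob{\bigl|(\hat\Sigma - \trueSigma)_{uv}\bigr| \ge 2L^2\left(\frac{\chi}{n} + \sqrt{\frac{2\chi}{n}}\right)} \le 2\ex{-\chi},
\end{equation}
where the two terms in the deviation correspond to the linear (Poisson-type) and quadratic (Gaussian-type) regimes of Bernstein's inequality. This is exactly the one-entry version of what we want.

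Next I would apply the union bound over the $p^2$ entries of the matrix. Replacing $\chi$ by $\chi + 2\log p$ inflates the probability by a factor $p^2$ on the right-hand side, which is absorbed by $\ex{-2\log p} = p^{-2}$, while on the left-hand side the deviation becomes
\begin{equation}
2L^2\left(\frac{2\log p + \chi}{n} + \sqrt{\frac{4\log p + 2\chi}{n}}\right) = \delta_n(\chi).
\end{equation}
Combining with $\infnorm{\hat\Sigma - \trueSigma} = \max_{u,v}\abs{(\hat\Sigma-\trueSigma)_{uv}}$ yields the claimed bound with probability tail $2\ex{-\chi}$.

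The step that requires slight care is matching the constants in the Bernstein bound to the exact constants $2L^2$ appearing in $\delta_n$; this is a bookkeeping issue once one writes down the sub-exponential norm of $X_{i,u}X_{i,v}$ coming from \ref{subGaussianVector}. Since van de Geer's Lemma 6 is stated precisely in the form needed, the result is obtained with no nontrivial probabilistic work — only the union-bound reparametrization described above.
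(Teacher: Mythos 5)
Your proposal is correct and is essentially the route the paper takes: the paper dispatches this lemma in one line as ``a trivial corollary of Lemma 6 by \cite{sara}'', which is precisely the per-entry Bernstein-type bound you invoke, followed by the union bound over the $p^2$ entries obtained by reparametrizing $\chi \mapsto \chi + 2\log p$. You have simply made explicit the bookkeeping the paper leaves implicit; there is no discrepancy.
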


\begin{lemma}\label{twomaticies}
	Assume, \ref{subGaussianVector} holds. Moreover, let
	
	\begin{equation}
	\infnorm{\empE{X_i X_i^T} - \trueSigma} \le \delta_s
	\end{equation}
	and let $\hatTheta$ denote a symmetric estimator of $\trueTheta$ s.t. 
	
	\begin{equation}
	\infnorm{\trueTheta - \hatTheta} < r
	\end{equation}
	and $\trueTheta_{ij} = 0 \Rightarrow \hatTheta_{ij}=0$.
	Then for positive $x$ and $q$
	
	\begin{equation}
	\Prob{\infnorm{\hatSigmaY - \trueSigmaY} \ge \deltaY} \le \probsigmay
	\end{equation}
	where 
	
	\begin{equation}
	\probsigmay \coloneqq \probsigmazboundone + \probsigmazboundtwo
	\end{equation}
	
	\begin{equation}
		\deltaY \coloneqq \infnorm{S^{-1}}^2\left(\SigmaZDelta^{(1)} + \SigmaZDelta^{(2)}\right)
	\end{equation}
	and $\SigmaZDelta^{(1)}$ and $\SigmaZDelta^{(2)}$ along with the probabilities $\probsigmazboundone$ and $\probsigmazboundtwo$  are defined in \ref{sigmaDelta0} and \ref{ZCovConcentration} respectively.

\end{lemma}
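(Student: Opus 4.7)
My plan is to reduce the target inequality about $\hatSigmaY - \trueSigmaY$ to an analogous statement about the covariance matrices of the vectors $\overline{Z_i}$, and then invoke the two concentration results that the introduction of this section already foreshadows.

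First I would exploit the block structure of the matrices $Y$ and $\Yb$ defined in \eqref{BFM111} and \eqref{BFMbootstrap}. Every nonzero column there is a signed, $\sqrt{\nmax/n}$-rescaled copy of $\inv{S}\overline{Z_i}$ (respectively $\inv{S}\overline{\Zbi}$). Because this normalization by $\inv{S}$ is deterministic and linear, I expect that the componentwise $\ell_\infty$ distance between $\hatSigmaY$ and $\trueSigmaY$ factors as
\begin{equation}
\infnorm{\hatSigmaY - \trueSigmaY} \le \infnorm{\inv{S}}^{2}\, \infnorm{\hatSigmahatZ - \trueSigmaZ},
\end{equation}
after checking that, for each window size $n\in\n$, the average of per-column variances over the $2\nmax$ columns of $Y_n$ collapses (independent central points contribute independently, and the sign change between left and right halves cancels on the diagonal blocks) to the covariance of a single $\inv{S}\overline{Z_i}$, and symmetrically for $\Yb_n$.

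Next, the specific bootstrap scheme (sampling indices $\varkappa_i$ uniformly from $\Istable$ and re-centering by $\empE{X_j}$) is precisely engineered so that $\Varboot{\overline{\Zbi}}$ equals the empirical covariance of the plug-in vectors $\overline{\hat Z_i}$ over the stable subsample, namely $\hatSigmahatZ$. Once this identification is in hand, the triangle inequality splits the problem into two halves:
\begin{equation}
\infnorm{\hatSigmahatZ - \trueSigmaZ} \le \infnorm{\hatSigmahatZ - \hatSigmaZ} + \infnorm{\hatSigmaZ - \trueSigmaZ}.
\end{equation}
The first of these quantifies the perturbation caused by replacing $\trueTheta$ with the plug-in estimator $\hatTheta$ inside the definition of $Z_i$, and is handled by \ref{ZCovConcentration}, yielding the bound $\SigmaZDelta^{(1)}$ with failure probability $\probsigmazboundone$. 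The second is the standard concentration of the empirical covariance of the i.i.d.\ vectors $\overline{Z_i}$ around its population value, for which \ref{sigmaDelta0} gives the bound $\SigmaZDelta^{(2)}$ with failure probability $\probsigmazboundtwo$.

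A final union bound over these two failure events then produces the announced probability $\probsigmay = \probsigmazboundone + \probsigmazboundtwo$, and combined with the deterministic scaling by $\infnorm{\inv{S}}^{2}$ it reconstructs the stated mis-tie $\deltaY = \infnorm{\inv{S}}^{2}(\SigmaZDelta^{(1)} + \SigmaZDelta^{(2)})$. The main technical obstacle I expect is the reduction step itself: the overlapping central points and the $\pm$ sign pattern in the $Y$-block matrices could in principle introduce cross-window cross-covariance terms, and one must verify that, when averaging over all $2\nmax$ columns, these cancel and leave only the single-vector covariance of $\inv{S}\overline{Z_i}$, so that the clean scaling by $\infnorm{\inv{S}}^{2}$ really does suffice.
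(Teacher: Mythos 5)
Your approach is essentially the same as the paper's: reduce $\infnorm{\hatSigmaY - \trueSigmaY}$ to $\infnorm{\inv{S}}^2 \infnorm{\hatSigmahatZ - \trueSigmaZ}$ by noting the block structure of $\trueSigmaY$ and $\hatSigmaY$ (each block is $\inv{S}\trueSigmaZ\inv{S}$ or $\inv{S}\hatSigmahatZ\inv{S}$ times a nonnegative scalar at most $1$), then split by the triangle inequality via $\hatSigmaZ$ and invoke the two concentration lemmas with a union bound. You match each lemma to the correct quantity but swap the superscript labels: in the paper's notation, \ref{sigmaDelta0} (concentration of $\hatSigmaZ$ around $\trueSigmaZ$) produces $\SigmaZDelta^{(1)}$ with failure probability $\probsigmazboundone$, while \ref{ZCovConcentration} (the $\hatTheta$-vs-$\trueTheta$ plug-in mis-tie $\hatSigmahatZ$ vs $\hatSigmaZ$) produces $\SigmaZDelta^{(2)}$ with failure probability $\probsigmazboundtwo$. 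Since both terms are simply summed in $\deltaY$ and $\probsigmay$, the swap is harmless, but it is worth keeping the notation consistent with the sub-lemmas.
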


\begin{proof}
	 Notice that 
	 
	 \begin{equation}
	 \infnorm{\hatSigmaY - \trueSigmaY} = \infnorm{\inv{S}\hatSigmahatZ\inv{S} - \inv{S}\trueSigmaZ\inv{S}}\le \infnorm{S^{-1}}^2\infnorm{\hatSigmahatZ - \trueSigmaZ}
	 \end{equation}
	 because the matrices $\hatSigmaY$ and  $\trueSigmaY$ are composed of blocks $\inv{S}\hatSigmaZ\inv{S}$ and $\inv{S}\trueSigmaZ\inv{S}$ respectively, each block multiplied by some positive value not greater than $1$ (which can be verified by simple algebra).
	 
	 	By \ref{ZCovConcentration} and \ref{sigmaDelta0}
	 	
	 	\begin{equation}
	 	\infnorm{\hatSigmahatZ - \trueSigmaZ} \le \SigmaZDelta^{(1)} + \SigmaZDelta^{(2)}
	 	\end{equation}
	 	and hence 
	 	
	 	\begin{equation}
	 	\infnorm{\hatSigmaY - \trueSigmaY} \le \infnorm{S^{-1}}^2(\SigmaZDelta^{(1)} + \SigmaZDelta^{(2)})
	 	\end{equation} 
	 	with probability at least 
	 	\begin{equation}
	 	1-\probsigmazboundone-\probsigmazboundtwo
	 	\end{equation}
	 	
\end{proof}
\begin{lemma} 
	\label{ZconcentrationComponentWise}
	Under \ref{subGaussianVector} it holds for arbitrary $1 \le u,v\le p$ and positive $x$  that
	\begin{equation}
	\Prob{\abs{Z_{1,uv}} \le \left(3L^2 \sqrt{d}\maxLambda{\trueTheta} \left(\frac{3}{2}+ 4x  \right) + 1\right) \infnorm{\trueTheta}    } \ge  1-\ex{-x}
	\end{equation}
	
\end{lemma}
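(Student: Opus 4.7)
Write $U = \trueTheta_u X_1$ and $V = \trueTheta_v X_1$, so that $Z_{1,uv} = UV - \trueTheta_{uv}$. Since $\trueTheta = \trueTheta \Sigma^* \trueTheta$, we have $\E{UV} = (\trueTheta \Sigma^* \trueTheta)_{uv} = \trueTheta_{uv}$, and in particular $|\trueTheta_{uv}| \le \infnorm{\trueTheta}$, which accounts for the additive ``$+1$'' inside the parenthesis of the claimed bound. The remaining task is a high-probability upper bound on $|UV|$.

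The linear forms $U$ and $V$ inherit sub-Gaussian tails from Assumption \ref{subGaussianVector}: applying the assumption to $a = \trueTheta_u^T / \|\trueTheta_u\|_2$ gives $\E{\exp\{(U/(L\|\trueTheta_u\|_2))^2\}} \le 2$, whence by Markov
\begin{equation}
\Prob{|U| \ge L\|\trueTheta_u\|_2 \sqrt{\log 4 + x}} \le \tfrac{1}{2}\ex{-x},
\end{equation}
and the analogous bound holds for $V$. A union bound then yields $\Prob{|UV| \ge L^2 \|\trueTheta_u\|_2 \|\trueTheta_v\|_2 (\log 4 + x)} \le \ex{-x}$.

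To match the shape of the target bound I estimate the two row $\ell_2$-norms using two different inequalities. Sparsity of $\trueTheta$ (at most $d$ non-zeros in any row, each bounded componentwise by $\infnorm{\trueTheta}$) gives $\|\trueTheta_u\|_2 \le \sqrt{d}\, \infnorm{\trueTheta}$; for the other row I instead invoke the operator-norm estimate $\|\trueTheta_v\|_2 = \|\trueTheta e_v\|_2 \le \maxLambda{\trueTheta}$. Substituting gives $|UV| \le L^2 \sqrt{d}\, \maxLambda{\trueTheta}\infnorm{\trueTheta}(\log 4 + x)$ with probability at least $1-\ex{-x}$, and the triangle inequality $|Z_{1,uv}| \le |UV| + |\trueTheta_{uv}|$ closes the estimate up to cosmetic repackaging of the numerical constants into the form $3(3/2 + 4x)$.

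The proof is essentially routine: Markov applied to the sub-Gaussian m.g.f., a two-event union bound, and the triangle inequality. The only genuine design choice is the asymmetric use of the two row-norm bounds, which is what allows the product $\sqrt{d}\,\maxLambda{\trueTheta}\infnorm{\trueTheta}$ to appear rather than $d\,\infnorm{\trueTheta}^2$ or $\maxLambda{\trueTheta}^2$; absent this choice the sparsity parameter would enter quadratically. I do not anticipate any substantive obstacle.
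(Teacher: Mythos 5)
Your proof is correct, and it takes a genuinely different route from the paper. The paper writes $Z_{1,uv} = X_1^T B X_1 - \trueTheta_{uv}$ with the rank-one matrix $B = \trueTheta_u^T \trueTheta_v$ and then invokes a Hanson--Wright-type tail bound for quadratic forms of sub-Gaussian vectors (the quadratic-form lemma in the Known Results section), afterwards simplifying via $\tr B = \maxLambda{B} = \sqrt{\tr B^2}$, which holds precisely because $B$ is rank one. You instead exploit that same rank-one structure at an earlier stage: the quadratic form factors as $X_1^T B X_1 = (\trueTheta_u X_1)(\trueTheta_v X_1)$, so two separate sub-Gaussian linear-form tails (Markov applied directly to the m.g.f. from the assumption) plus a union bound and $|UV| \le |U|\,|V|$ suffice. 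Your asymmetric row-norm estimates $\twoNorm{\trueTheta_u} \le \sqrt{d}\,\infnorm{\trueTheta}$ (sparsity) and $\twoNorm{\trueTheta_v} \le \maxLambda{\trueTheta}$ (operator norm) play the same role as the paper's chain $\onenorm{\trueTheta_u}\infnorm{\trueTheta} \le \sqrt{d}\,\twoNorm{\trueTheta_u}\infnorm{\trueTheta} \le \sqrt{d}\,\maxLambda{\trueTheta}\infnorm{\trueTheta}$; both produce the same product of parameters. Your argument is more elementary (it avoids quadratic-form concentration entirely) and in fact delivers a tighter constant, $\log 4 + x$ versus the paper's $3\bigl(\tfrac{3}{2}+4x\bigr)$, so it implies the lemma as stated after trivially enlarging the constant. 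The paper's heavier machinery would only become essential if $B$ were not rank one, in which case the factorization you rely on is unavailable.
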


\begin{proof}
	Re-write the definition \eqref{zijk} of an element $\Ziuv$ for arbitrary $1 \le u,v \le p$

	\begin{equation}
	\begin{split}
	\Ziuv &= \trueTheta_u X_i\trueTheta_vX_i-\trueTheta_{uv}\\
	& = X_i^T \left[\trueTheta_u  (\trueTheta_v)^T\right] X_i - \trueTheta_{uv}.
	\end{split}
	\end{equation}
	
	The first term is clearly a value of a quadratic form defined by the matrix $B=\trueTheta_u  (\trueTheta_v)^T$. Note that $\rank B=1$ which implies that it is either positive semi-definite or negative semi-definite. Next we apply \ref{qf} and obtain for all positive $x$
	
	\begin{equation}\label{bound}
		\Prob{\abs{X_i^T B X_i} \ge 3L^2\left( \abs{\tr B} + 2 \sqrt{\tr(B^2)x} + 2 \abs{\maxLambda{B}}x  \right) } \le \ex{-x}.
	\end{equation}
	Again, due to the fact that $B$ is a rank-$1$ matrix 
	
	\begin{equation}\label{eqs}
	\tr B = \maxLambda{B} = \sqrt{\tr B^2}
	\end{equation}
	and by construction of matrix $B$
	
	\begin{equation}\label{treq}
	\begin{split}
	\abs{\tr B}  &= \abs{\trueTheta_u (\trueTheta_v)^T} \\ 
	& \le \onenorm{\trueTheta_u} \infnorm{\trueTheta} \\ 
	& \le \sqrt{d} \twoNorm{\trueTheta_u} \infnorm{\trueTheta} \\ 
	& \le \sqrt{d}\maxLambda{\trueTheta}\infnorm{\trueTheta}.
	\end{split}
	\end{equation}
	Substitution of \eqref{eqs} and \eqref{treq} to \eqref{bound} yields 
	
	\begin{equation}
	\Prob{\abs{X_i^T B X_i} \ge 3L^2 \sqrt{d}\maxLambda{\trueTheta} \infnorm{\trueTheta} \left(1 + 2  \sqrt{x} + 2  x  \right) } \le \ex{-x}.
	\end{equation}
	And since $\sqrt{x} \le x + \frac{1}{4}$
	
	\begin{equation}
	\Prob{\abs{X_i^T B X_i} \ge 3L^2 \sqrt{d}\maxLambda{\trueTheta} \infnorm{\trueTheta} \left(\frac{3}{2}  + 4x  \right) } \le \ex{-x}.
	\end{equation}
	Finally, we obtain a bound for $\Ziuv$ as 
	
	\begin{equation}
	\Prob{\abs{\Ziuv} \ge \left(3L^2 \sqrt{d}\maxLambda{\trueTheta} \left(\frac{3}{2}  + 4x  \right) + 1\right) \infnorm{\trueTheta}  } \le \ex{-x}.
	\end{equation}
	
\end{proof}

Correction for all $i, u \text{ and } v$ establishes the following result

\begin{lemma} 
	\label{Zconcentration}
	Consider an i.i.d. sample $X_i$ of length $n$. Under \ref{subGaussianVector} for positive $x$ it holds that
	\begin{equation}
	\Prob{\forall i \in \{1..n\}  : \infnorm{Z_i} \le \Zbound{n}{x}} \ge  1-\probzbound{n}{x}
	\end{equation} 
	where 
	
	\begin{equation}
	\Zbound{n}{x} \coloneqq	 \left(3L^2 \sqrt{d}\maxLambda{\trueTheta} \left(\frac{3}{2} +4 \log p^2n + 4x  \right) + 1\right) \infnorm{\trueTheta} ,
	\end{equation}
	
	\begin{equation}
	\probzbound{n}{x} \coloneqq	\ex{-x}.
	\end{equation}
\end{lemma}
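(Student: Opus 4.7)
The plan is to derive the uniform bound as a direct union-bound extension of \ref{ZconcentrationComponentWise}. Observe that $\infnorm{Z_i} = \max_{u,v} \abs{\Ziuv}$, so the event $\{\forall i \in \{1..n\} : \infnorm{Z_i} \le \Zbound{n}{x}\}$ coincides with the event that $\abs{\Ziuv} \le \Zbound{n}{x}$ simultaneously for every $i \in \{1,\dots,n\}$ and every pair $u,v \in \{1,\dots,p\}$.

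First, I would invoke \ref{ZconcentrationComponentWise} for each fixed triple $(i,u,v)$. Since the $X_i$ are i.i.d., the component-wise lemma applies to every $\Ziuv$ and yields, for any positive $x'$,
\begin{equation*}
\Prob{\abs{\Ziuv} > \left(3L^2\sqrt{d}\maxLambda{\trueTheta}\left(\tfrac{3}{2} + 4x'\right) + 1\right)\infnorm{\trueTheta}} \le \ex{-x'}.
\end{equation*}

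Second, I would apply a union bound over the $np^2$ events indexed by $(i,u,v)$ with the calibrated choice $x' \coloneqq x + \log(p^2 n)$. The aggregate failure probability is then bounded by $np^2 \cdot \ex{-x - \log(p^2 n)} = \ex{-x} = \probzbound{n}{x}$, matching the claim. Substituting $x'$ back into the per-coordinate deviation constant produces $\left(3L^2\sqrt{d}\maxLambda{\trueTheta}\left(\tfrac{3}{2} + 4\log(p^2 n) + 4x\right) + 1\right)\infnorm{\trueTheta}$, which coincides term-for-term with $\Zbound{n}{x}$.

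The argument is essentially mechanical and presents no substantive obstacle; the only thing that needs care is the bookkeeping of the logarithmic correction so that the constants align exactly with the stated form of $\Zbound{n}{x}$ and the residual probability collapses to $\ex{-x}$.
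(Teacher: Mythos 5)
Your proof is correct and matches the paper's (terse) argument: the paper simply states that ``correction for all $i$, $u$ and $v$'' yields the lemma from \ref{ZconcentrationComponentWise}, which is exactly the union bound over $np^2$ events with the shifted tail parameter $x' = x + \log(p^2 n)$ that you carried out explicitly. Your bookkeeping — substituting $x'$ into the deviation constant and collapsing the aggregate failure probability $np^2 e^{-x'}$ to $e^{-x}$ — is exactly right.
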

\begin{lemma}\label{sigmaDelta0}
Under \ref{subGaussianVector} for positive $x$ and $q$

\begin{equation}
\Prob{\infnorm{\hatSigmaZ - \trueSigmaZ} \ge \SigmaZDelta^{(1)}} \le \probsigmazboundone
\end{equation} 
where 

\begin{equation}
\SigmaZDelta^{(1)} \coloneqq \frac{s}{s-1} \Bernstain{4\Zboundsqr{s}{x} + \frac{s-1}{s}\infnorm{\trueSigmaZ}}{q}{s}{\sigma_W^2} 
\end{equation}

\begin{equation}
\probsigmazboundone \coloneqq p^4\ex{-q} + \probzbound{s}{x}
\end{equation}

\end{lemma}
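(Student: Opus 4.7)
The plan is to reduce the $\ell_\infty$ control of the $p^2 \times p^2$ matrix $\hatSigmaZ - \trueSigmaZ$ to a coordinate-wise scalar Bernstein inequality followed by a union bound over its $p^4$ entries, after first truncating to a deterministic ball on which the vectorised residuals $\Zvi$ are uniformly bounded.

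First I would condition on the event $\mathcal{E} \coloneqq \{\forall i \in \Istable : \infnorm{Z_i} \le \Zbound{s}{x}\}$. By \ref{Zconcentration}, $\Prob{\mathcal{E}^c} \le \probzbound{s}{x}$, which supplies the second summand of $\probsigmazboundone$. On $\mathcal{E}$ every coordinate of $\Zvi \in \R^{p^2}$, and therefore of the empirical mean $\bar Z \coloneqq \empE{\Zvi}$, is bounded in absolute value by $\Zbound{s}{x}$.

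Next I would fix entry indices $(u,v)$ and handle the bias of the sample covariance via the identity $\hatSigmaZ = \tfrac{s-1}{s}\tilde\Sigma$, where $\tilde\Sigma \coloneqq \tfrac{1}{s-1}\sum_{i\in\Istable}(\Zvi - \bar Z)(\Zvi - \bar Z)^T$ is unbiased for $\trueSigmaZ$. Rearranging,
\begin{equation*}
\tfrac{s}{s-1}\bigl(\hatSigmaZ - \trueSigmaZ\bigr) \;=\; \bigl(\tilde\Sigma - \trueSigmaZ\bigr) \;-\; \tfrac{1}{s-1}\trueSigmaZ .
\end{equation*}
Representing $\tilde\Sigma - \trueSigmaZ$ as a centred sum of effectively i.i.d.\ terms (e.g.\ via $\tilde\Sigma = \tfrac{1}{s-1}\sum_i \Zvi\Zvi^T - \tfrac{s}{s-1}\bar Z \bar Z^T$ and absorbing the $\bar Z \bar Z^T$ piece into the almost-sure bound), I obtain on $\mathcal{E}$ a uniform entry-wise bound of $4\Zboundsqr{s}{x} + \tfrac{s-1}{s}\infnorm{\trueSigmaZ}$ on each summand (the factor $4$ coming from the double centring $\Zvi - \bar Z$) together with a variance controlled by $\sigma_W^2$. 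A scalar Bernstein inequality --- whose inversion for a prescribed failure probability is precisely the function $\Bernstain{\cdot}{\cdot}{\cdot}{\cdot}$ --- then bounds each entry of $\tfrac{s}{s-1}(\hatSigmaZ - \trueSigmaZ)$ with failure probability at most $2\ex{-q}$.

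Finally a union bound over the $p^2 \cdot p^2 = p^4$ entries contributes the $p^4\ex{-q}$ factor (the benign constant $2$ absorbed via $q$), which together with the conditioning error gives exactly $\probsigmazboundone$. The main technical obstacle is the algebraic bookkeeping that recasts $\tilde\Sigma - \trueSigmaZ$ into a genuinely i.i.d.\ mean-zero sum while threading both the biased-to-unbiased prefactor $\tfrac{s}{s-1}$ and the correction $\tfrac{s-1}{s}\infnorm{\trueSigmaZ}$ into precisely the form of $\SigmaZDelta^{(1)}$ stated; once this decomposition is pinned down, the Bernstein step and the union bound are routine.
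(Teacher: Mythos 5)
Your proposal reproduces the paper's own argument in all essentials: both truncate to the event from \ref{Zconcentration} (cost $\probzbound{s}{x}$), control $\hatSigmaZ - \trueSigmaZ$ by a coordinate-wise scalar Bernstein bound applied to the centred, bounded summands $\Wi = (\Zvi - \empE{\Zvi})(\Zvi - \empE{\Zvi})^T - \tfrac{s-1}{s}\trueSigmaZ$ with almost-sure entry bound $4\Zboundsqr{s}{x}+\tfrac{s-1}{s}\infnorm{\trueSigmaZ}$ and variance $\sigma_W^2$, and union-bound over the $p^4$ entries to obtain $\probsigmazboundone$. If anything you are a touch more careful: the paper asserts the $\Wi$ are i.i.d., whereas you rightly flag that they are only \emph{effectively} so because of the shared centring by $\empE{\Zvi}$.
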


\begin{proof}
Denote 

\begin{equation}
\Wi \coloneqq (\Zv_i - \empE{\overline{Z_i}}) (\Zv_i - \empE{\overline{Z_i}})^T - \frac{s-1}{s} \trueSigmaZ
\end{equation} 
and note that 

\begin{equation}
\frac{s-1}{s}\left(\hatSigmaZ - \trueSigmaZ\right) = \frac{1}{s}\sum_{i\in \Istable} \Wi.
\end{equation}

By \ref{Zconcentration} we have $\infnorm{Z_i} \le \Zbound{s}{x}$ with probability at least $1-\probzbound{s}{x}$ which implies $\infnorm{\Wi} \le 4\Zboundsqr{s}{x} + \frac{s-1}{s} \infnorm{\trueSigmaZ}$. 
Since $\Wi_{kl}$ are i.i.d., bounded and centered, Bernstein inequality applies here:

\begin{equation}
\Prob{\empE{\Wi_{kl}} \ge \Bernstain{4\Zboundsqr{s}{x} + \frac{s-1}{s}\infnorm{\trueSigmaZ}}{q}{s}{\sigma_W^2} } \le e^{-q}
\end{equation} 
where $\sigma_{W}^2$ is the smallest variance of components of $\Wi$. Therefore

\begin{equation}
\Prob{\infnorm{\empE\Wi} \ge \Bernstain{4\Zboundsqr{s}{x} + \frac{s-1}{s}\infnorm{\trueSigmaZ}}{q}{s}{\sigma_W^2} } \le p^4e^{-q}.
\end{equation}

\end{proof}

The following lemma bounds the mis-tie between $Z_i$ and $\hat Z_i$.

\begin{lemma} \label{zmistie}
	Let \ref{subGaussianVector} holds
	and let $\hatTheta$ be a symmetric estimator of $\trueTheta$ s.t. 
	
	\begin{equation}
	\infnorm{\trueTheta - \hatTheta} < r
	\end{equation} 
	and $\trueTheta_{ij} = 0 \Rightarrow \hatTheta_{ij}=0$. Then for positive $x$
	
	\begin{equation}
		\Prob{\forall i\in \Istable: \infnorm{Z_i - \hat Z_i} \le \deltaZ} \ge 1 -  \probXBound
	\end{equation} 
	where 
	
	\begin{equation}
		\deltaZ \coloneqq 2rd^{3/2}x^2 \infnorm{\trueTheta} + (rdx)^2
	\end{equation}
	
	\begin{equation}
		\probXBound \coloneqq s\ex{-x^2/L^2}
	\end{equation}
	
\end{lemma}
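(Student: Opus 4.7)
The plan is to expand the difference $Z_{i,uv} - \hat Z_{i,uv}$ into an explicit algebraic form and then control each resulting piece by combining the sparsity pattern of $D \coloneqq \hatTheta - \trueTheta$ with the sub-Gaussian tail bound on $X_i$ furnished by \ref{subGaussianVector}. The crucial observation is that the zero-preservation assumption $\trueTheta_{ij} = 0 \Rightarrow \hatTheta_{ij} = 0$ transfers the row-sparsity of $\trueTheta$ (at most $d$ nonzeros per row) directly to $D$, with entries of magnitude at most $r$. Hence $\onenorm{D_u} \le dr$ and $\twoNorm{D_u} \le \sqrt{d}\, r$ for every row, which is what will make the final bound polynomial in $d$ rather than in $p$.

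First I would add and subtract the mixed term $\hatTheta_u X_i\,\trueTheta_v X_i$ to obtain the decomposition
\begin{equation*}
Z_{i,uv} - \hat Z_{i,uv} = -D_u X_i\, \trueTheta_v X_i - \trueTheta_u X_i\, D_v X_i + D_u X_i\, D_v X_i + D_{uv},
\end{equation*}
which isolates two linear-in-$D$ cross terms and one quadratic-in-$D$ term, plus a harmless scalar residual. Second, I would invoke \ref{subGaussianVector} via Markov's inequality applied to the exponential moment: for any $a$ with $\twoNorm{a} \le 1$ this yields $\Prob{|a^T X_i| > x} \lesssim \ex{-x^2/L^2}$, which I would use both with $a = e_k$ to control $\infnorm{X_i}$ by $x$ and with $a = \trueTheta_v/\twoNorm{\trueTheta_v}$ to control the projection $|\trueTheta_v X_i|$ by $\twoNorm{\trueTheta_v}\,x \le \sqrt{d}\,\infnorm{\trueTheta}\,x$. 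Third, I would bound each linear cross term by the asymmetric combination $|D_u X_i| \le \onenorm{D_u}\infnorm{X_i} \le dr\,x$ paired with $|\trueTheta_v X_i| \le \sqrt{d}\,\infnorm{\trueTheta}\,x$, producing a contribution of order $d^{3/2}\, r\, \infnorm{\trueTheta}\, x^2$ for each of the two cross terms, and the quadratic term $|D_u X_i D_v X_i| \le (drx)^2$ by the $\ell_1$-$\ell_\infty$ bound applied twice. The stray $|D_{uv}| \le r$ is absorbed into the leading cross-term contribution. Finally I would take a union bound over $i \in \Istable$ and over pairs $(u,v)$, picking up the factor $s$ (and an implicit factor associated with the coordinate maxima over $p$) in the tail probability, recovering the stated $\probXBound$.

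The main obstacle I anticipate is keeping the probability bound tight enough to match the simple form $\probXBound = s\ex{-x^2/L^2}$ while simultaneously controlling the $\ell_\infty$-norm over all pairs $(u,v)$: a naive union bound would accumulate additional factors of $p$. The remedy is to separate the two roles in each cross term — the $D$-factor handled through the $\ell_1$-$\ell_\infty$ inequality (which only requires $\infnorm{X_i}$ to be controlled once, uniformly in $u$) and the $\trueTheta$-factor handled through a directional sub-Gaussian tail whose variance proxy $\twoNorm{\trueTheta_v}^2$ is already of order $d$. Balancing these two bounds is precisely what yields the mixed exponents $d^{3/2}$ and $d^2$ appearing in the definition of $\deltaZ$.
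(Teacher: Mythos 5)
Your proposal reproduces the paper's proof in all essentials: the telescoping decomposition of $\Ziuv - \hatZiuv$ around $D = \hatTheta - \trueTheta$, the transfer of row-sparsity from $\trueTheta$ to $D$ so that $\onenorm{D_u} \le dr$, the directional sub-Gaussian bound $\abs{\trueTheta_v X_i} \le \twoNorm{\trueTheta_v}\,x \le \sqrt{d}\,\infnorm{\trueTheta}\,x$, the coordinate bound $\infnorm{X_i} \le x$, and the resulting assembly into $2rd^{3/2}\infnorm{\trueTheta}x^2 + (rdx)^2$. The paper keeps $\hatTheta_v = \trueTheta_v + D_v$ packed inside a single cross term rather than separating out the quadratic piece explicitly, but this is a cosmetic difference and the final bound is identical.

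One remark on the tail probability, since you explicitly flag it as the main obstacle: the ``remedy'' you describe does not actually eliminate the factor of $p$. Controlling $\infnorm{X_i}$ by $x$ already requires a union over the $p$ coordinates of $X_i$, and controlling $\abs{\trueTheta_v X_i}/\twoNorm{\trueTheta_v}$ simultaneously for all rows $v$ requires a union over another $p$ directions; factoring the cross terms reduces the number of events from $O(p^2)$ to $O(p)$, not to $O(1)$. The paper's stated $\probXBound = s\ex{-x^2/L^2}$ quietly omits this multiplicity (it accounts only for the union over $i \in \Istable$), so your proof is no less rigorous than the original on this point, but the claim that role-separation closes the gap is incorrect. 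Since $x$ enters $\deltaZ$ polynomially and the probability exponentially, inflating $x$ by an additive $O(L\sqrt{\log p})$ repairs the bound at only polylogarithmic cost in $\deltaZ$, which is presumably the intent.
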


\begin{proof}
	Due to sub-Gaussianity,
	
	\begin{equation} \label{xboundeq}
		\forall \alpha \in \R^p : \Prob{\abs{\alpha^T X_i} \le x} \ge 1- s\ex{-x^2/L^2}
	\end{equation} 
	Now consider the mis-tie of arbitrary elements $\Ziuv$ and $\hatZiuv$ : 
	
	\begin{equation}
	\begin{split}
	\abs{\Ziuv - \hatZiuv} &= \abs{\trueTheta_u X_i\trueTheta_v X_i + \trueTheta_{uv} - \hatTheta_u X_i\hatTheta_v X_i  -\hatTheta_{uv} } \\ 
	& \le \abs{(\trueTheta_{u} - \hatTheta_{u}) X_i\trueTheta_v X_i } + \abs{(\trueTheta_{u} - \hatTheta_{u}) X_i\hatTheta_v X_i } +r 
	\end{split}
	\end{equation} 
	Now note that due to \eqref{xboundeq} and assumptions imposed on $\trueTheta$
	
	\begin{equation}
	\abs{\trueTheta_v X_i} \le \sqrt{d} \infnorm{\trueTheta} x
	\end{equation}
	
	\begin{equation}
		\abs{(\trueTheta_v - \hatTheta_v) X_i} \le rdx
	\end{equation}
	
	\begin{equation}
		\abs{\hatTheta_v X_i} \le \abs{\trueTheta_v X_i} + \abs{(\trueTheta_v - \hatTheta_v) X_i} \le \sqrt{d} \infnorm{\trueTheta} x + rdx
	\end{equation} 
	And hence 
	
	\begin{equation}
	\abs{\Ziuv - \hatZiuv} \le 2rd^{3/2}x^2 \infnorm{\trueTheta} + (rdx)^2
	\end{equation}
	
\end{proof}

\begin{lemma}\label{ZCovConcentration}
	
	Assume \ref{subGaussianVector} holds.	
	Let $\hatTheta$ be a symmetric estimator of $\trueTheta$ s.t. 
	
	\begin{equation}
	\infnorm{\trueTheta - \hatTheta} < r
	\end{equation} 
	and $\trueTheta_{ij} = 0 \Rightarrow \hatTheta_{ij}=0$.
	Then for positive $x$
	
	\begin{equation}
	\Prob{\infnorm{\hatSigmaZ - \hatSigmahatZ} \ge \SigmaZDelta^{(2)}} \le \probsigmazboundtwo 
	\end{equation} 
	where
	
	\begin{equation}
		 \probsigmazboundtwo \coloneqq \probXBound + \probzbound{s}{x}
	\end{equation}
	
	\begin{equation}
		\SigmaZDelta^{(2)} = \deltaZ (2\Zbound{s}{\x} + \deltaZ)
	\end{equation}
\end{lemma}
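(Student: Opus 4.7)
My plan is to work on the intersection of two high probability events supplied by earlier lemmas and then derive the desired entrywise bound deterministically.

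First, by \ref{Zconcentration} applied to the sample $\{X_i\}_{i \in \Istable}$ of size $s$, the event
\begin{equation}
E_1 \coloneqq \left\{ \forall i \in \Istable : \infnorm{Z_i} \le \Zbound{s}{x} \right\}
\end{equation}
holds with probability at least $1-\probzbound{s}{x}$. By \ref{zmistie}, the event
\begin{equation}
E_2 \coloneqq \left\{ \forall i \in \Istable : \infnorm{\hat Z_i - Z_i} \le \deltaZ \right\}
\end{equation}
holds with probability at least $1-\probXBound$. A union bound shows that $E_1 \cap E_2$ fails with probability at most $\probzbound{s}{x} + \probXBound = \probsigmazboundtwo$, which will give the claimed probability bound once the deterministic step is done.

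Next I would set up the deterministic step on $E_1 \cap E_2$. Write
\begin{equation}
\hatSigmaZ = \empE{\Zvi \Zvi^T} - \empE{\Zvi}\empE{\Zvi}^T, \qquad \hatSigmahatZ = \empE{\Zhatvi \Zhatvi^T} - \empE{\Zhatvi}\empE{\Zhatvi}^T,
\end{equation}
and decompose
\begin{equation}
\hatSigmahatZ - \hatSigmaZ = \empE{\Zhatvi \Zhatvi^T - \Zvi \Zvi^T} \;-\; \left( \empE{\Zhatvi}\empE{\Zhatvi}^T - \empE{\Zvi}\empE{\Zvi}^T \right).
\end{equation}
For any entry $(k,l)$ I would use the telescoping identity
\begin{equation}
\hat Z_{i,k}\hat Z_{i,l} - Z_{i,k}Z_{i,l} = (\hat Z_{i,k} - Z_{i,k})\hat Z_{i,l} + Z_{i,k}(\hat Z_{i,l} - Z_{i,l}),
\end{equation}
combined with the entrywise bounds $\abs{Z_{i,k}} \le \Zbound{s}{x}$, $\abs{\hat Z_{i,k}} \le \Zbound{s}{x} + \deltaZ$ and $\abs{\hat Z_{i,k} - Z_{i,k}} \le \deltaZ$ supplied by $E_1 \cap E_2$. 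This pins each entry of $\Zhatvi\Zhatvi^T - \Zvi\Zvi^T$ at magnitude no larger than $\deltaZ(2\Zbound{s}{x} + \deltaZ)$, and taking the empirical average over $i$ preserves the bound. The second term in the decomposition is handled identically after noting that $\infnorm{\empE{Z_i}} \le \Zbound{s}{x}$ and $\infnorm{\empE{\hat Z_i} - \empE{Z_i}} \le \deltaZ$ (both follow by Jensen applied under $E_1 \cap E_2$). Combining both contributions yields $\infnorm{\hatSigmahatZ - \hatSigmaZ} \le \SigmaZDelta^{(2)}$ on $E_1 \cap E_2$, up to an absolute constant.

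The step I expect to require the most care is matching the exact constants in $\SigmaZDelta^{(2)} = \deltaZ(2\Zbound{s}{x} + \deltaZ)$: the most natural triangle-inequality argument ends up with a bound that is a small constant factor larger, so the decomposition must be organized in the tightest way (using the asymmetric bounds $\abs{Z_{i,k}}\le \Zbound{s}{x}$ on one side and $\abs{\hat Z_{i,l}}\le \Zbound{s}{x}+\deltaZ$ on the other, and bounding the centered and uncentered pieces of the covariance by the same quantity so they can be consolidated rather than doubled). Once that bookkeeping is done, the proof reduces to the union bound above.
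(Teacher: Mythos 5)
Your argument is sound and follows the paper's plan at its core: a union bound over the high‑probability events supplied by \ref{Zconcentration} and \ref{zmistie}, followed by a deterministic telescoping of rank‑one outer products. The only genuine difference is the decomposition. The paper works directly with the centered vectors $\xii = \Zvi - \empE{\Zvi}$ and $\hatxii = \hatZvi - \empE{\hatZvi}$ (for which $\hatSigmaZ = \empE{\xii\xii^T}$ and $\hatSigmahatZ = \empE{\hatxii\hatxii^T}$) and telescopes $\xii\xii^T - \hatxii\hatxii^T = \xii(\xii - \hatxii)^T + (\xii - \hatxii)\hatxii^T$ in a single step, giving $\infnorm{\hatSigmaZ - \hatSigmahatZ} \le \left(\infnorm{\xii} + \infnorm{\hatxii}\right)\infnorm{\xii - \hatxii}$. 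You instead first split the empirical covariance into a raw second‑moment term and a product‑of‑means term and telescope each piece; this adds the two contributions and, as you anticipated, picks up an extra factor of $2$. That concern about constants is legitimate, but it applies equally to the paper: done rigorously, centering inflates the raw bounds by another factor of two ($\infnorm{\xii}\le 2\Zbound{s}{x}$, $\infnorm{\xii - \hatxii}\le 2\deltaZ$), so the paper's displayed $\deltaZ\left(2\Zbound{s}{x} + \deltaZ\right)$ is itself only correct up to an absorbed absolute constant (indeed a rigorous accounting of your route yields a smaller slack than the paper's). In short, your decomposition is a cosmetic variant of the paper's, the probabilistic skeleton matches exactly, and the constant slack you flagged is shared with rather than specific to your route.
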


\begin{proof}

	By \ref{Zconcentration} with probability at least $1-\probzbound{s}{\x}$ we have $\infnorm{Z_i} \le \Zbound{s}{\x}$ and in combination with \ref{zmistie} we obtain $\infnorm{\hatZvi} \le \Zbound{s}{\x} + \deltaZ$ with probability at least $1-\probzbound{s}{x} - \probXBound$.
	Now denote 
	
	\begin{equation}
	\xii \coloneqq \Zvi - \empE{\Zvi } \text{ and } \hatxii \coloneqq \hatZvi - \empE{\hatZvi} 
	\end{equation} 
	And deliver the bound
	
	\begin{equation}
	\begin{split}
	\infnorm{\hatSigmaZ - \hatSigmahatZ} & \le \empE{\xii (\xii - \hatxii)^T + (\xii - \hatxii)\hatxii^T} \\
	& \le \left(\infnorm{\hatxii} + \infnorm{\xii}\right)  \infnorm{\xii - \hatxii} \ \\
	& \le \deltaZ (2\Zbound{s}{\x} + \deltaZ)
	\end{split}
	\end{equation}
	
\end{proof}

\section{Known results}\label{knownresults}

\subsection{Gaussian approximation result}
In this section we briefly describe the result obtained in \cite{Chernozhukov2014}.

Throughout this section consider an  independent sample $x_1, ... , x_n \in \R^p$ of centered random variables. Define their Gaussian counterparts  $y_i \sim \N{0}{\Var{x_i}}$ and denote their scaled sums as

\begin{equation}
S^X_n \coloneqq \frac{1}{\sqrt{n}} \sum_{i=1}^n x_i
\end{equation}

\begin{equation}
S^Y_n \coloneqq \frac{1}{\sqrt{n}} \sum_{i=1}^n y_i 
\end{equation}

\begin{definition}\label{hyperrectdef}
	We call a set $A$ of the form $A = \{w \in \R^p : a_i \le w_i \le b_i ~\forall i \in \{1..p\} \}$ a hyperrectangle. The family of all hyperrectangles is denoted as $A^{re}$.
\end{definition}

\begin{assumption} \label{garass1}
	$\exists b > 0$ such that
	
	\begin{equation} \label{garassumption1}
	\frac{1}{n} \sum_{i=1}^{n}\E{x_{ij}^2} \ge b \text{ for all } j \in 1..p
	\end{equation}
	
\end{assumption}

\begin{assumption} \label{garassumptions}
	$\exists G_n \ge 1$ such that 
	
	\begin{equation} \label{garassumption2}
	\frac{1}{n} \sum_{i=1}^{n} \E{\abs{x_{ij}}^{2+k}} \le G_n^{2+k} \text{ for all } j \in 1..p \text{ and } k \in \{1,2\}
	\end{equation}	
	
	\begin{equation} \label{garassumption3}
	\E{\exp \left(\frac{\abs{x_{ij}}}{G_n}\right)} \le 2 \text{ for all } j \in 1..p \text{ and } i \in 1.. n
	\end{equation}
	
\end{assumption}

\begin{lemma}[Proposition 2.1 by \cite{Chernozhukov2014}]\label{generalGAR}
	Let \ref{garass1} hold for some $b$ and \ref{garassumptions} hold for some $G_n$. 
	Then
	
	\begin{equation}
	\sup_{A \in A^{re}} \abs{\Prob{S^X_n \in A}  - \Prob{S^Y_n \in A}} \le C\left(\frac{G_n^2\log^7(pn)}{n}\right)^{1/6}
	\end{equation} 
	and the constant $C$ depends only on $b$.
	
\end{lemma}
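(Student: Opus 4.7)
} The statement is Proposition 2.1 of \cite{Chernozhukov2014} and is quoted here without proof, but my plan for reproducing it follows the Stein/Slepian-interpolation strategy of that paper. The basic idea is to replace the indicator of a hyperrectangle by a smooth surrogate, compare $S^X_n$ and $S^Y_n$ under that surrogate via an interpolation argument, and finally convert the smooth bound back into a bound on hyperrectangles using an anti-concentration estimate for the Gaussian.

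First I would smooth. Given a hyperrectangle $A = \prod_j [a_j,b_j]$, write $\mathbbm{1}_{A}(w) = \prod_j \psi_0(w_j-a_j)\psi_0(b_j-w_j)$ with an indicator $\psi_0$ of the half-line, and approximate $\psi_0$ by a $C^3$ function $\psi_\beta$ obtained from convolution with a bump of width $\beta^{-1}$; equivalently, replace $\max_j$ over $2p$ linear forms by the soft-max $\beta^{-1}\log\sum_j e^{\beta y_j}$. This gives a function $g_\beta$ whose first three partial derivatives in the components are bounded by powers of $\beta$, with an $L^\infty$ error of order $\beta^{-1}$ times the Gaussian density near the boundary of $A$. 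Second, I would run Slepian's interpolation: set $Z(t) = \sqrt{t}\,S^X_n + \sqrt{1-t}\,S^Y_n$ (with $y_i$ independent of $x_i$) and write
\begin{equation}
\E g_\beta(S^X_n) - \E g_\beta(S^Y_n) = \int_0^1 \frac{d}{dt}\E g_\beta(Z(t))\,dt.
\end{equation}
Integration by parts in the Gaussian part (Stein's identity) cancels the second-order terms, leaving a third-order remainder in $\partial^3 g_\beta$ integrated against third moments of $x_i$ and $y_i$.

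Third, I would bound this remainder. The key computation is $\|\partial^3 g_\beta\|$ controlled by $C\beta^2$ (after summing the indices with weights given by the soft-max) and the third-moment contribution is $n^{-1/2}\cdot n^{-1}\sum_i \E|x_{ij}|^3 \lesssim n^{-1/2} G_n^3$. Hence the smooth discrepancy is of order $\beta^2 G_n^3/\sqrt{n}$ on the event that $S^X_n, S^Y_n$ are of moderate size. The sub-exponential tail assumption \ref{garassumption3} lets me truncate $x_{ij}$ at level $G_n\log(pn)$ with a negligible error and controls the contribution of the fourth-order term (appearing after a two-term Taylor expansion) by $\beta^3 G_n^4 \log(pn)/n$. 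Finally, I would invoke the Nazarov-type Gaussian anti-concentration inequality $\Prob{S^Y_n \in A^{\varepsilon}\setminus A^{-\varepsilon}} \le C\varepsilon\sqrt{\log p}/\sqrt{b}$ to convert the smoothing error of order $\beta^{-1}$ into a bound of order $\beta^{-1}\sqrt{\log p}$. Balancing $\beta$ to equalise $\beta^{-1}\sqrt{\log p}$ with $\beta^2 G_n^3 \log^a(pn)/\sqrt{n}$ yields the exponent $1/6$ and the $\log^7(pn)$ factor; the dependence on $b$ enters only through the anti-concentration constant.

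The main obstacle is the bookkeeping that produces the precise $\log^7$ factor: one has to track simultaneously (i) the truncation level needed to apply the sub-exponential bound \ref{garassumption3}, (ii) the polynomial-in-$\beta$ growth of the soft-max derivatives together with the combinatorial sum over the $2p$ faces of the hyperrectangle, and (iii) the anti-concentration factor $\sqrt{\log p}$, then to optimise $\beta$ so that all three contributions are balanced. A secondary technical point is verifying that the anti-concentration inequality applies to the Gaussian vector $S^Y_n$ whose coordinate variances are only bounded below by $b$ (not equal to $1$); this is where \ref{garass1} is used, and it is the reason the constant $C$ is allowed to depend on $b$.
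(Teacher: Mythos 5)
The paper offers no proof of this lemma: it is quoted verbatim from Chernozhukov, Chetverikov and Kato's Proposition 2.1 and appears in the ``Known results'' appendix precisely so that it can be invoked without re-derivation. There is therefore no in-paper argument against which to check yours.

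That said, your sketch is a fair high-level account of the original CCK proof. The four ingredients you list --- (i) replacing the hyperrectangle indicator with a smooth surrogate built from the log-sum-exp soft-max, (ii) a Slepian/Lindeberg interpolation $Z(t)=\sqrt{t}\,S_n^X+\sqrt{1-t}\,S_n^Y$ combined with Stein's identity to cancel the second-order term, (iii) truncation at level $\sim G_n\log(pn)$ justified by the sub-exponential moment bound \eqref{garassumption3}, and (iv) Nazarov's anti-concentration inequality to convert the $O(\beta^{-1})$ smoothing error back to an error on hyperrectangles --- are exactly the ones CCK use, and the $n^{-1/6}$ exponent does arise from balancing $\beta^{-1}$ against $\beta^{2}/\sqrt{n}$ as you indicate. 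One caution: your heuristic balancing of $\beta^{-1}\sqrt{\log p}$ against $\beta^{2}G_n^{3}\log^{a}(pn)/\sqrt{n}$ produces a final bound scaling like $G_n\,n^{-1/6}$, whereas the claimed bound scales like $G_n^{1/3}n^{-1/6}$; the discrepancy comes from how $G_n$ enters the truncated third- and fourth-moment terms after Stein's leave-one-out argument, which is precisely the bookkeeping you flag as the obstacle. Tracking those powers carefully (and noting that $G_n\ge 1$, so the weaker scaling does not actually falsify the claim, only loses sharpness) is where the real work in CCK lies.
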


%\subsection{Sparse precision matrix estimation} \input{glasso.tex}

\subsection{Anti-concentration result}
\begin{lemma}[Nazarov's inequality \cite{Nazarov2003}] \label{anti}
	Consider a normal $p$-dimensional vector $X \sim \N{0}{\Sigma}$ and let $\forall i : \Sigma_{ii} = 1$. Then for any $y \in \R^p$ and any positive $a$
	\begin{equation}
		\Prob{X \le y +a} - \Prob{X \le y} \le C a \sqrt{\log p},
	\end{equation}
	where $C$ is an independent constant.
\end{lemma}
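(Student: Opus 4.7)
The plan is to reduce the inequality to a pointwise bound on the Lebesgue density of the maximum of the Gaussian vector, and then establish that density bound by the face-decomposition argument underlying the proofs of Nazarov~\cite{Nazarov2003} and Chernozhukov--Chetverikov--Kato~\cite{Chernozhukov2014}.

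Set $M := \max_{i \le p}(X_i - y_i)$. Since $\{X \le y+a\mathbf{1}\} = \{M \le a\}$ and $\{X \le y\} = \{M \le 0\}$, the left-hand side of the claim equals
\[
\Prob{0 < M \le a} \;=\; \int_0^a f_M(t)\,dt,
\]
so it suffices to show that the density $f_M$ is uniformly bounded by $C\sqrt{\log p}$. Differentiating $F(t) := \Prob{M \le t} = \Prob{X \le y+t\mathbf{1}}$ and decomposing the boundary of the shifted orthant $\{x : x \le y+t\mathbf{1}\}$ into its $p$ flat faces $\{x_i = y_i+t,\ x_j \le y_j + t \ \forall j \ne i\}$ gives the identity
\[
f_M(t) \;=\; \sum_{i=1}^p \phi(y_i+t)\,\Prob{X_j \le y_j + t \ \forall j \ne i \,\big|\, X_i = y_i + t},
\]
where $\phi$ denotes the standard normal density and we used $\Sigma_{ii}=1$.

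The main obstacle, and the only nontrivial step, is showing that this sum is $O(\sqrt{\log p})$ rather than the naive $O(p)$. The key is the trade-off between the two factors: $\phi(y_i+t)$ is of order one only when $|y_i+t| \lesssim 1$, while the conditional probability becomes small whenever many of the thresholds $y_j+t$ are themselves small. I would make this quantitative by splitting the index set at the threshold $\sqrt{2\log p}$. For indices with $y_i+t \ge \sqrt{2\log p}$, one has $\phi(y_i+t) \le (p\sqrt{2\pi})^{-1}$, so the total contribution of this group is at most a universal constant. For the remaining indices, I would condition on $X_i = y_i+t$, note that each $X_j$ remains Gaussian with conditional variance at most one (monotonicity of variances under conditioning), and apply a Nazarov-type anti-concentration estimate to the $(p-1)$-dimensional conditional vector, combined with the fact that on this regime $y_j+t$ is bounded by $\sqrt{2\log p}$. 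An induction on $p$, or equivalently Ehrhard's Gaussian isoperimetric inequality applied to the orthant (which directly controls the Gaussian surface measure of its boundary by a constant times $\sqrt{\log p}$ when $\Sigma_{ii}=1$), then closes the bound on the sum.

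Putting the two regimes together yields $f_M(t) \le C\sqrt{\log p}$ uniformly in $t$ and in $y$, and integrating over $[0,a]$ delivers the claimed inequality. The resulting constant $C$ is independent of $\Sigma$, $p$, $y$ and $a$: the argument uses only the normalization $\Sigma_{ii}=1$ and monotonicity of conditional variances. The most delicate point is the inductive step in the low-threshold regime, because the conditional means of the remaining coordinates depend intricately on the off-diagonal entries of $\Sigma$; the cleanest way I see to handle this is to pass to the centered conditional vector and invoke the already-proved one-lower-dimensional case, which is precisely the inductive structure of Nazarov's original proof.
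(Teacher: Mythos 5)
The paper does not prove this lemma; it is stated as a known result with a citation to \cite{Nazarov2003}, so there is no internal proof to compare against. Your sketch reproduces the standard opening moves of known proofs — reducing to a pointwise bound on the density of $M = \max_i(X_i - y_i)$, the face decomposition of the shifted orthant, and the high/low-threshold split at $\sqrt{2\log p}$ — but it stops exactly where the real work begins. Disposing of the high-threshold indices with $\phi(\sqrt{2\log p}) \le (p\sqrt{2\pi})^{-1}$ is easy; the entire content of the lemma lies in the low-threshold regime, and the argument you give for it does not go through as written.

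Two concrete problems. First, the inductive step does not close: after conditioning on $X_i = y_i + t$, the remaining coordinates have conditional variances $1 - \Sigma_{ij}^2$, which can be arbitrarily small. You therefore cannot apply the unit-variance statement one dimension lower with the same constant; rescaling to unit variances stretches the thresholds by $1/\sqrt{1-\Sigma_{ij}^2}$ and the constant you would get back degrades. Centering the conditional vector, which is the fix you propose for the ``most delicate point,'' addresses the conditional means but not the conditional variances, and it is the variances that break the induction. Second, the Gaussian isoperimetric / Ehrhard inequality bounds Gaussian surface measure from \emph{below}, not above; it does not, as you assert, ``directly control'' the surface measure of the orthant by $O(\sqrt{\log p})$. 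That upper bound on $\sum_i \phi(y_i+t)\Prob{\,\cdot \mid X_i = y_i + t}$ is essentially a restatement of the lemma itself, so invoking isoperimetry there is circular. As a sanity check that the burden really falls on the step you leave open: take $\Sigma = I$ and $y \equiv 0$, so $f_M(t) = p\,\phi(t)\Phi(t)^{p-1}$. This peaks at a value of order $\sqrt{\log p}$, attained at a point strictly below $\sqrt{2\log p}$, so for that $t$ every index falls into your low-threshold regime and the full $\sqrt{\log p}$ must come from the argument you have only sketched. A correct proof needs a more delicate device than either a naive induction on $p$ or a direct appeal to isoperimetry.
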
 
 
\subsection{Gaussian comparison result}

By the technique given in the proof of Theorem 4.1 by \cite{Chernozhukov2014} one obtains the following generalization of the result given in \cite{anticonc}

\begin{lemma} \label{gaussianComparison}
	Consider a pair of covariance matrices $\Sigma_1$ and $\Sigma_2$ of size $p\times p$ such that
	\begin{equation}
	\infnorm{\Sigma_1 - \Sigma_2} \le \Delta
	\end{equation} 
	and $\forall k : C_1 \ge \Sigma_{1,kk} \ge c_1 > 0$.
	Then for random vectors $\eta \sim \N{0}{\Sigma_1}$ and $\zeta \sim \N{0}{\Sigma_2}$ it holds that 
	
	\begin{equation}
	\sup_{A \in A^{re}} \abs{\Prob{\eta \in A} - \Prob{\zeta \in A}} \le C\Delta^{1/3} \log^{2/3}p,
	\end{equation} 
	where $C$ is a positive constant which depends only on $C_1$ and $c_1$.
\end{lemma}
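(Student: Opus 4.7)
The plan is to follow the Slepian--Stein smart-path argument with a two-stage smoothing of the hyperrectangle indicator, as in the proof of Theorem~2 of Chernozhukov--Chetverikov--Kato. First I reduce a hyperrectangle $A=\{w : a_j \le w_j \le b_j\}$ to a sublevel set of a maximum: $\mathbbm{1}_A(w)=\mathbbm{1}\{\max_{i\le 2p}\phi_i(w)\le 0\}$ with $\phi_{2j-1}(w)=w_j-b_j$ and $\phi_{2j}(w)=a_j-w_j$. I then approximate this indicator by $f(w):=g_\epsilon(F_\beta(\Phi(w)))$, where $F_\beta(z)=\beta^{-1}\log\sum_i e^{\beta z_i}$ is the soft-max (with $0\le F_\beta(z)-\max_i z_i\le \beta^{-1}\log(2p)$) and $g_\epsilon$ is a smooth nonincreasing cutoff equal to $1$ on $(-\infty,0]$ and to $0$ on $[\epsilon,\infty)$ with $\|g_\epsilon^{(k)}\|_\infty \le C_k \epsilon^{-k}$.

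Second, I apply Slepian's interpolation with $W_t=\sqrt{t}\,\zeta+\sqrt{1-t}\,\eta$ (taking $\eta$ and $\zeta$ independent). Gaussian integration by parts gives
\begin{equation}
\E{f(\zeta)}-\E{f(\eta)}=\frac{1}{2}\int_0^1\sum_{j,k}(\Sigma_{2,jk}-\Sigma_{1,jk})\,\E{\partial_j\partial_k f(W_t)}\,dt,
\end{equation}
which I bound entry-wise by $\tfrac{1}{2}\Delta\cdot\sup_t \E{\sum_{j,k}|\partial_j\partial_k f(W_t)|}$. Direct differentiation, using that the softmax weights $\pi_i(z)=e^{\beta z_i}/\sum_k e^{\beta z_k}$ form a probability vector, yields $\sum_{j,k}|\partial_j\partial_k f(w)|\le C'(\epsilon^{-2}+\beta\epsilon^{-1})$, so the interpolation contributes at most a constant times $\Delta(\epsilon^{-2}+\beta\epsilon^{-1})$.

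Third, I control the smoothing error: $|\mathbbm{1}_A-f|$ is supported in a neighborhood of $\partial A$ of coordinate-width $O(\beta^{-1}\log p+\epsilon)$. Applying Nazarov's inequality (\ref{anti}) to each of $\eta$ and $\zeta$ after diagonal normalization (the diagonals of $\Sigma_2$ lie in $[c_1-\Delta,C_1+\Delta]$, which stays in a fixed compact set inside $(0,\infty)$ provided $\Delta\le c_1/2$; the complementary regime $\Delta>c_1/2$ is trivial, since one can enlarge $C$ to make the bound exceed $1$), I obtain a smoothing error of order $(\beta^{-1}\log p+\epsilon)\sqrt{\log p}$. Combining,
\begin{equation}
|\Prob{\eta\in A}-\Prob{\zeta\in A}|\le C'\bigl((\beta^{-1}\log p+\epsilon)\sqrt{\log p}+\Delta(\epsilon^{-2}+\beta\epsilon^{-1})\bigr),
\end{equation}
and the choices $\beta\asymp\epsilon^{-1}\log p$ and $\epsilon\asymp\Delta^{1/3}\log^{1/6}p$ balance the three terms into $C\Delta^{1/3}\log^{2/3}p$.

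The main obstacle I expect is obtaining the sharp estimate $\sum_{j,k}|\partial_j\partial_k f|\le C'(\epsilon^{-2}+\beta\epsilon^{-1})$: a naive bound through $\|F_\beta\|_{C^2}\lesssim \beta$ on each of $(2p)^2$ entries gives $p^2\beta^2/\epsilon$, which is far too weak to yield the $1/3$-power rate. The key is that the mixed partials of $F_\beta$ have the form $\beta(\pi_i\delta_{ij}-\pi_i\pi_j)$ with $\sum_i\pi_i=1$, providing a self-concordance-type cancellation that kills the dimension dependence in entrywise $\ell_1$-type sums. Propagating this through the chain rule for $g_\epsilon\circ F_\beta\circ\Phi$ and verifying that only $c_1$ and $C_1$ (not $p$ or $\Delta$) enter the constant in the Nazarov step are the principal bookkeeping tasks.
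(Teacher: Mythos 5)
Your argument is correct and is precisely the Slepian–Stein interpolation plus softmax smoothing plus Nazarov anti-concentration machinery that the paper (which gives no independent proof) points to via its citation of the technique in Theorem~4.1 of Chernozhukov, Chetverikov and Kato. The key step you flag — the $\ell_1$-type bound $\sum_{j,k}\abs{\partial_j\partial_k f}\lesssim \epsilon^{-2}+\beta\epsilon^{-1}$ exploiting that the softmax Hessian $\beta(\pi_i\delta_{ij}-\pi_i\pi_j)$ has bounded total mass since $\sum_i\pi_i=1$ — is indeed the crux, and your parameter choices balance correctly to yield the stated $\Delta^{1/3}\log^{2/3}p$ rate.
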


\subsection{Tail inequality for quadratic forms}

The following result is a direct corollary of Theorem 1 in \cite{qboundPaper}

\begin{lemma}\label{qf}
	Consider a positive semi-definite or negative semi-definite matrix $B$ and suppose \ref{subGaussianVector} holds. Then for all $t>0$
	
	\begin{equation}
	\Prob{\abs{X_1^T B X_1} \ge 3L^2\left( \abs{\tr B} + 2 \sqrt{\tr(B^2) t} + 2 \abs{\maxLambda{B}}t  \right) } \le \ex{-t}
	\end{equation}
	
\end{lemma}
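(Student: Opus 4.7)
The claim is advertised as a direct corollary of Theorem~1 of \cite{qboundPaper}, a Hanson-Wright-type tail inequality for quadratic forms. I would proceed in three short steps. First, I would reduce to the case that $B$ is positive semi-definite: if $B$ is negative semi-definite, then $|X_1^T B X_1| = X_1^T (-B) X_1$ almost surely, and each of the quantities $|\tr B|$, $\tr(B^2)$, $|\Lambda(B)|$ appearing on the right-hand side is invariant under $B\mapsto -B$, so proving the PSD case is enough.

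Second, for PSD $B$, I would write $B = A^T A$ with $A = B^{1/2}$, so that $X_1^T B X_1 = \|A X_1\|_2^2$. The cited theorem states that if a centered random vector $X$ satisfies $\E{\exp(\alpha^T X)} \le \exp(\sigma^2 \|\alpha\|_2^2 / 2)$ for every $\alpha$, then for any matrix $A$ and every $t > 0$,
\begin{equation*}
\Prob{\|A X\|_2^2 \ge \sigma^2\bigl(\tr(A^T A) + 2\sqrt{\tr((A^T A)^2)\, t} + 2\,\|A^T A\|\, t\bigr)} \le e^{-t}.
\end{equation*}
Substituting $A^T A = B$ produces the correct algebraic shape: $\tr(B)$, $\tr(B^2)$, and $\|B\| = \Lambda(B)$ for PSD $B$.

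Third, I would convert the Orlicz-type sub-Gaussianity of \ref{subGaussianVector}, namely $\E{\exp((a^T X_1/L)^2)} \le 2$ for all unit vectors $a$, into an MGF bound of the form just used. By a standard $\psi_2$-to-MGF calculation (Taylor-expanding the exponential, symmetrizing, and optimizing), one obtains $\E{\exp(\lambda\, a^T X_1)} \le \exp(3 L^2 \|a\|_2^2 \lambda^2 / 2)$ for every $\lambda \in \R$. Plugging $\sigma^2 = 3 L^2$ into the tail bound of the previous step yields precisely the prefactor $3L^2$ that appears in the claim.

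The only real obstacle is the constant-chasing in the Orlicz-to-MGF conversion: getting exactly the factor $3$ (as opposed to some other absolute constant) requires a careful but routine calculation. Everything else is algebraic manipulation of traces together with the identity $\|AX\|_2^2 = X^T B X$.
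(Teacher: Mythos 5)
The paper supplies no proof of this lemma at all: the sentence preceding it (``The following result is a direct corollary of Theorem~1 in \cite{qboundPaper}'') is the entire argument. Your sketch fills in the steps the paper leaves implicit, and the overall structure --- factor $B=A^{T}A$, apply the Hsu--Kakade--Zhang tail bound, and convert the $\psi_2$ condition of \ref{subGaussianVector} into a moment-generating-function bound --- is the correct and intended route. Two of your steps, however, deserve scrutiny.

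The reduction to the PSD case does not go through as written. With the paper's notation, $\Lambda(\cdot)$ is the \emph{maximal eigenvalue} (cf.\ \ref{boundedEigen}, which pairs it with a minimal eigenvalue $\lambda(\cdot)$), and $\lvert\Lambda(B)\rvert$ is \emph{not} invariant under $B\mapsto -B$: for the NSD matrix $B=\operatorname{diag}(0,-1)$ one has $\lvert\Lambda(B)\rvert=0$ but $\lvert\Lambda(-B)\rvert=1$. This is not a pedantic point, because the $2\lvert\Lambda(B)\rvert t$ term is exactly what controls the tail for large $t$; read literally, the lemma with $\Lambda = $ max eigenvalue is actually false for NSD matrices (the right-hand side grows only like $\sqrt{t}$, while $\lvert X_1^T B X_1\rvert$ has an exponential rather than sub-Gaussian tail). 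Your reduction is valid only if $\Lambda(B)$ is reinterpreted as the spectral radius or operator norm, which is what the HKZ theorem's $\lVert A^{T}A\rVert$ actually is. You should make this reinterpretation explicit instead of asserting the invariance; the discrepancy is inherited from the lemma statement, but a proof should surface it.

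On the constant: you present the factor $3$ as what ``a careful but routine calculation'' yields, but a clean calculation yields something stronger. Splitting on $\lvert\lambda\rvert L\le 1$ (use $e^{x}\le x+e^{x^{2}}$ together with Jensen applied to $t\mapsto t^{\lambda^2L^2}$) and $\lvert\lambda\rvert L>1$ (use Young's inequality $\lambda Y\le \lambda^2L^2/2+Y^2/(2L^2)$ and Jensen with $\sqrt{\cdot}$), one gets $\E{e^{\lambda a^{T}X_1}}\le e^{L^2\lambda^2\lVert a\rVert^2}$, i.e.\ $\sigma^2=2L^2$. Substituting that into the HKZ bound produces a prefactor of $2L^2$, which implies the stated lemma \emph{a fortiori}; the $3L^2$ in the statement is a harmless over-estimate rather than a constant that the Orlicz-to-MGF conversion forces on you. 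Claiming the calculation gives exactly $3$ would be misleading; what it gives is $\sigma^2\le 3L^2$, which is all that is needed.
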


\subsection{High-dimensional precision matrix estimation}\label{glassosec}
In order to address the problem of high-dimensional precision matrix estimation one has to assume its sparsity. Below we describe two approaches exploiting this assumption. In both of them we assume that an i.i.d. sample $X_1, ... X_n \in \R^p$ is supplied.

\subsubsection{Graphical lasso}

In \cite{glassoTib} the graphical lasso approach was suggested. An estimate may be obtained as the solution of the following optimization problem over a positive-definite cone $S_{++}^p$ of $p \times  p$ dimensional matrices.

\begin{equation}\label{logdet}
\glestimation \coloneqq \arg\mymin{\Theta \in S_{++}^p}{ \tr(\Theta \hatSigma) - \log det \Theta + \lambda \onenorm{\Theta}}
\end{equation}
where $\hatSigma$ stands for the empirical covariance matrix 

\begin{equation}
\hatSigma = \frac{1}{n}\sum\limits_{i=1}^n X_i X_i^T.
\end{equation}

The theoretical treatment of the approach keeps track  on the following Schatten norms: $\kappaSigma = \InfNorm{  \Sigma^* }$ and $\kappaGamma = \InfNorm{ \inv{ (\Gamma^*_{SS}) }}$.
The following result establishes consistency of the estimator in the sense of \ref{consdef}.
\begin{lemma}[Theorem 1, \cite{ravikumar2011}] 
	\label{ravikumarTheorem}
	Consider a distribution satisfying \ref{irrepass} with some $\phi \in (0,1]$,  let $\hat\Theta$ be a solution of the optimization problem \eqref{logdet} with tuning parameter $\lambda_n = \frac{8}{\psi} \delta_n$.
	Furthermore, impose the following sparsity assumption:
	\begin{equation}\label{sparsity}
	d \le \frac{1}{6(\delta_n + \lambda_n)\max\{\kappaGamma\kappaSigma, \kappaGamma^2\kappaSigma^3  \}}.
	\end{equation}
	Then on the set  $\mathcal{T} = \left\{ \infnorm{\hat\Sigma - \Sigma^*} < \delta_n \right\}$the following holds:
	\begin{equation}
	\infnorm{\hat\Theta^{GL} - \Theta^*}  \le  r_{\lambda} \coloneqq 2\kappaGamma(\delta_n + \lambda_n)
	\end{equation}
	and
	\begin{equation}
	\Theta^*_{ij} = 0  \Rightarrow \hat\Theta_{ij} = 0 .
	\end{equation}
\end{lemma}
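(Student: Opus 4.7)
The proof follows the primal--dual witness (PDW) strategy. First I would write the KKT conditions for the convex program \eqref{logdet}: since the objective is strictly convex on $S_{++}^p$ and differentiable apart from the $\ell_1$ term, any optimizer $\hat\Theta$ satisfies
\[
\hat\Sigma - \hat\Theta^{-1} + \lambda_n \hat Z = 0,
\]
where $\hat Z$ is a subgradient of $\|\cdot\|_1$. The witness is built by solving the restricted problem that enforces $\Theta_{ij}=0$ outside $\mathcal{S}$, producing a candidate $\tilde\Theta$ and, from the restricted KKT, a candidate dual $\tilde Z_{\mathcal{S}}$. The stationarity equation is then used to \emph{define} $\tilde Z_{\mathcal{S}^c}$. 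If one can show strict dual feasibility $\|\tilde Z_{\mathcal{S}^c}\|_\infty<1$, standard convex-analysis arguments imply that $\tilde\Theta$ is in fact the unique solution of the unrestricted problem, and consequently the sparsity-pattern conclusion $\Theta^*_{ij}=0\Rightarrow \hat\Theta_{ij}=0$ follows automatically.

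Next I would linearize the KKT identity around $\Theta^*$. Writing $\Delta := \tilde\Theta - \Theta^*$ and Taylor-expanding the matrix inverse,
\[
\tilde\Theta^{-1} = \Theta^{*-1} - \Theta^{*-1}\Delta\,\Theta^{*-1} + R(\Delta),
\]
with $R(\Delta)$ a quadratic (in $\Delta$) remainder. Vectorizing and using $\Gamma^* = \Theta^{*-1}\otimes\Theta^{*-1}$, the KKT system becomes $\Gamma^*\overline{\Delta} + \overline{W} + \lambda_n\overline{\tilde Z} - \overline{R(\Delta)} = 0$, where $W := \hat\Sigma-\Sigma^*$. Splitting this into its $\mathcal{S}$/$\mathcal{S}^c$ blocks and solving on $\mathcal{S}$ gives an explicit expression for $\tilde Z_{\mathcal{S}^c}$ in terms of $\Gamma^*_{\mathcal{S}^c\mathcal{S}}(\Gamma^*_{\mathcal{S}\mathcal{S}})^{-1}$, $W$, and $R(\Delta)$; the irrepresentability condition \ref{irrepass} then produces the crucial $(1-\psi)$-contraction that is the engine of strict dual feasibility.

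The final and most delicate step is a self-consistent bound. On the event $\mathcal{T}$ one has $\|W\|_\infty\le\delta_n$. Applying $(\Gamma^*_{\mathcal{S}\mathcal{S}})^{-1}$ to the on-support block and using $\kappa_{\Gamma^*}$ produces the candidate radius $r_\lambda = 2\kappa_{\Gamma^*}(\delta_n+\lambda_n)$. A Brouwer fixed-point argument on the $\ell_\infty$-ball of radius $r_\lambda$ (restricted to $\mathcal{S}$) shows existence of $\Delta$ with $\|\Delta\|_\infty\le r_\lambda$, provided the quadratic remainder $R(\Delta)$ is dominated by the linear term on this ball. The sparsity assumption \eqref{sparsity} is precisely what is needed to enforce $\|R(\Delta)\|_\infty \lesssim d\,\kappa_{\Sigma^*}^3\|\Delta\|_\infty^2$ being small compared to $\delta_n+\lambda_n$, which simultaneously (i) closes the fixed-point argument and (ii) leaves enough of the $\psi$-slack from irrepresentability to verify $\|\tilde Z_{\mathcal{S}^c}\|_\infty<1$. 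The main obstacle is this tight interplay: choosing $\lambda_n=(8/\psi)\delta_n$ and the $d$-bound in \eqref{sparsity} is exactly the balance that makes both the remainder estimate and the dual feasibility go through.
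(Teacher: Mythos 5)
Your sketch is a faithful, correct outline of the primal--dual witness argument that Ravikumar et al.\ use to prove this theorem; in the present paper the lemma is simply cited from \cite{ravikumar2011} without an internal proof, so there is no alternative route to compare against. Your three-stage plan (KKT stationarity plus witness construction, linearization via $\Gamma^* = \Theta^{*-1}\otimes\Theta^{*-1}$ with the irrepresentability contraction yielding strict dual feasibility, and a Brouwer fixed-point closure on the $\ell_\infty$-ball of radius $r_\lambda$ with the sparsity bound controlling the quadratic remainder) matches the original proof's structure and correctly identifies that $\lambda_n = (8/\psi)\delta_n$ together with \eqref{sparsity} is what balances the remainder estimate against the $\psi$-slack needed for $\|\tilde Z_{\mathcal{S}^c}\|_\infty < 1$.
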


A rather similar result is provided in paper \cite{mypaper} for adaptive versions of graphical lasso suggested and studied in \cite{zou2008}   \cite{SCADGlasso} \cite{Fan01variableselection}  \cite{zou2006} .
\subsubsection{Node-wise lasso}

This section describes the node-wise lasso approach which was suggested in \cite{meinshausen2006}.

For each $1 \le j \le n$ define a vector $$\hat\Gamma_j \coloneqq \left(\hat\gamma_{j\,1}, ..., \hat\gamma_{j\,j-1}, 1, \hat\gamma_{j\,j+1} , ..., \hat\gamma_{j\,p}\right) $$ 
where $\hat{\gamma}_j$ is defined as a solution of the following lasso regression:
\begin{equation}
\hat\gamma_j  \coloneqq \arg\max_{\gamma \in \R^{p-1}} \frac{1}{n}\sum_{1 \le i \le n} \left(X_{ij} - X_{i,-j}^T\gamma \right)^2 + 2\lambda \onenorm{\gamma}
\end{equation}
and 
\begin{equation}
\hat{\tau}^2_j \coloneqq \frac{1}{n}\sum_{1 \le i \le n} \left(X_{ij} - X_{i,-j}^T\hat\gamma_j \right)^2 + \lambda \onenorm{\gamma}.
\end{equation}
Finally the $j$-th column of the estimator is defined as 

\begin{equation}
\mbestimation_{j} \coloneqq \hat{\Gamma}_j  / \hat\tau^2_j .
\end{equation}

Note, that this estimator might not be symmetric, so one cannot use it as an estimator $\hatTheta$ based on the sub-sample $\{X_i\}_{i\in\Istable}$. 
The paper \cite{janamb} suggests to construct a de-sparsified estimator $\hat T(\mbestimation)$ where

\begin{equation} \label{despdef}
\hat T(\hatTheta) \coloneqq  \hatTheta + \hatTheta^T- \hatTheta^T\hatSigma \hatTheta
\end{equation}
and threshold elements of $\hat T$ obtaining a positive-definite estimate. 

Under \ref{subGaussianVector}, the sparsity assumption $\frac{d\log p}{{n}} = o(1)$ and the assumption of the bounded spectrum (\ref{boundedEigen}) the paper \cite{janamb} establishes the root-$n$ consistency of such an estimator (see \ref{consdef}).

\begin{assumption}\label{boundedEigen}
	\begin{equation}\label{key}
	\exists E : \frac{1}{E} \le \minLambda{\trueTheta} \le \maxLambda{\trueTheta} \le E .
	\end{equation}
\end{assumption}

\subsubsection{Bounds for $r$}

While graphical lasso and node-wise estimate are point estimates, de-sparsified estimators have been suggested in order to obtain confidence intervals \cite{sara} \cite{janamb}. 

The analysis of these estimators relies on the bounds for the residual term $r$:

\begin{equation}
r \coloneqq {\hat T - \left(\trueTheta - \trueTheta (\trueSigma - \hatSigma) \trueTheta\right)}
\end{equation}

The next two lemmas bound the remainder $r$ for the case of graphical lasso and node-wise estimator.

\begin{lemma}[by \cite{sara}]
	\label{lemmar}
	Impose \ref{subGaussianVector}, \ref{irrepass}  and \ref{boundedEigen}.
	Then under the sparsity assumption 
	\begin{equation}\label{sparsityAss}
	\frac{d\log p}{\sqrt{n}} = o(1)
	\end{equation}
	it holds that
	
	\begin{equation}
	\infnorm{r} = O_p\left(\frac{d\log p}{{n}}\right).
	\end{equation}
	
\end{lemma}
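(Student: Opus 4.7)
The plan is to rearrange $r$ into a sum of three products, each of which is quadratic in the estimation errors, and bound each piece entrywise using the element-wise concentration of $\hatTheta$ and $\hatSigma$ together with the shared sparsity pattern.

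First I would use $\trueSigma\trueTheta = I$ and $\hat T = 2\hatTheta - \hatTheta\hatSigma\hatTheta$ (available since the graphical lasso returns a symmetric $\hatTheta$) to rewrite the residual. Setting $\Delta \coloneqq \hatTheta - \trueTheta$ and expanding,
\begin{equation*}
r = \hat T - \bigl(\trueTheta - \trueTheta(\hatSigma-\trueSigma)\trueTheta\bigr) = -\trueTheta(\hatSigma-\trueSigma)\Delta - \Delta(\hatSigma-\trueSigma)\trueTheta - \Delta\hatSigma\Delta,
\end{equation*}
so that every summand carries at least two small factors and one factor which is either $\trueTheta$ (sparse with bounded spectrum) or $\hatSigma$.

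Next I would gather the concentration ingredients. \ref{sigmaconcentration} supplies $\infnorm{\hatSigma-\trueSigma} = O_p(\sqrt{\log p / n})$, and on the same event \ref{ravikumarTheorem} yields $\infnorm{\Delta} = O_p(\sqrt{\log p / n})$ together with the sparsity preservation $\trueTheta_{ij}=0 \Rightarrow \hatTheta_{ij}=0$. In particular both $\trueTheta$ and $\Delta$ carry at most $d$ nonzeros per row and per column, and \ref{boundedEigen} gives $\twoNorm{\trueTheta_{i,\cdot}} \le \maxLambda{\trueTheta} = O(1)$.

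The decisive step is the elementwise bound. For the mixed term, denoting the supports of the $i$-th row and $j$-th column of $\trueTheta$ by $S_i, S_j$ (each of size $\le d$), one has
\begin{equation*}
(\trueTheta(\hatSigma-\trueSigma)\Delta)_{ij} = \trueTheta_{i,S_i}\,(\hatSigma-\trueSigma)_{S_i,S_j}\,\Delta_{S_j,j},
\end{equation*}
which I would bound by $\twoNorm{\trueTheta_{i,\cdot}}\cdot\|(\hatSigma-\trueSigma)_{S_i,S_j}\|_{op}\cdot\twoNorm{\Delta_{\cdot,j}}$. Combining $\twoNorm{\trueTheta_{i,\cdot}} = O(1)$ with the column-sparse Frobenius estimate $\twoNorm{\Delta_{\cdot,j}} \le \sqrt d\,\infnorm{\Delta} = O_p(\sqrt{d\log p/n})$ and a uniform $d\times d$ sub-spectral bound $\|(\hatSigma-\trueSigma)_{S,S'}\|_{op} = O_p(\sqrt{(d+\log p)/n})$ (obtained from sub-Gaussian concentration via a union bound over index subsets of size $\le d$) gives $|(\trueTheta(\hatSigma-\trueSigma)\Delta)_{ij}| = O_p(d\log p/n)$, uniformly in $i,j$. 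The symmetric term follows by transposition, and $\Delta\hatSigma\Delta$ is handled by the same mechanism with the $d\times d$ submatrix of $\hatSigma$ having operator norm $O_p(1)$ thanks to \ref{boundedEigen}.

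The main obstacle will be precisely this sharpening from the naive entrywise estimate $O_p(d^2 \log p / n)$ down to $O_p(d \log p / n)$: a direct application of $\infnorm{\hatSigma-\trueSigma}\cdot\onenorm{\trueTheta_{i,\cdot}}\cdot\onenorm{\Delta_{\cdot,j}}$ wastes a factor of $d$. Replacing one $\ell_1$ sum by an $\ell_2$ norm (enabled by the bounded spectrum) and controlling $\hatSigma - \trueSigma$ at the level of its $d$-sparse submatrices is where the sparsity assumption $d\log p/\sqrt n = o(1)$ is consumed, and it is the principal technical point of the cited analysis.
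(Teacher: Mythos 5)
The paper provides no proof of this lemma; it is quoted directly from \cite{sara}, so I compare your attempt against the cited analysis rather than an in-paper argument. Your algebra is correct: with $\Delta \coloneqq \hatTheta - \trueTheta$ and $\trueSigma\trueTheta = I$, one has $r = -\trueTheta(\hatSigma-\trueSigma)\Delta - \Delta(\hatSigma-\trueSigma)\trueTheta - \Delta\hatSigma\Delta$, and you correctly diagnose that the naive row-$\ell_1$ estimate gives $O_p(d^2\log p/n)$ and that removing a factor of $d$ hinges on $\twoNorm{\trueTheta_{i,\cdot}} = O(1)$ via the bounded spectrum. Your operator-norm argument on $d\times d$ submatrices of $\hatSigma-\trueSigma$ does deliver the stated rate, but two details deserve care: the rate $\sqrt{(d+\log p)/n}$ is legitimate only because the supports $S_i,S_j$ are deterministic (fixed by $\trueTheta$ and sparsity preservation), so the union runs over $p^2$ pairs rather than all $\binom{p}{d}^2$ index sets; and for $\Delta\hatSigma\Delta$ you additionally need a uniform bound on the largest singular value of the submatrices $\hatSigma_{S,S'}$, which again rests on that same submatrix concentration together with $d+\log p = o(n)$ (implied by \eqref{sparsityAss}).

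The cited analysis takes a lighter path that avoids submatrix operator-norm control entirely: rewrite $\Delta(\hatSigma-\trueSigma)\trueTheta = \Delta(\hatSigma\trueTheta - I)$, establish $\infnorm{\hatSigma\trueTheta - I} = O_p(\sqrt{\log p/n})$ by elementwise sub-exponential concentration of products of sub-Gaussian coordinates, and bound $\abs{(\Delta(\hatSigma\trueTheta - I))_{ij}} \le \onenorm{\Delta_{i,\cdot}}\,\infnorm{\hatSigma\trueTheta - I} \le d\,\infnorm{\Delta}\cdot O_p(\sqrt{\log p/n}) = O_p(d\log p/n)$; the other cross term is the transpose, and $\Delta\hatSigma\Delta$ splits into $\Delta\trueSigma\Delta$ (handled exactly as you do, via $\ell_2$ and spectrum) plus $\Delta(\hatSigma-\trueSigma)\Delta = O_p(d^2(\log p/n)^{3/2})$, which is $o_p(d\log p/n)$ under \eqref{sparsityAss}. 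Both routes are correct and isolate the same structural lever --- replacing one $\ell_1$ sum by an $\ell_2$ control enabled by the bounded spectrum --- but the standard argument substitutes a single elementwise tail bound for $\hatSigma\trueTheta - I$ where yours invokes uniform operator-norm concentration on $d$-sparse submatrices, so yours is marginally heavier on the probabilistic machinery.
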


A finite sample-size bound for $r$ along with its adaptations for the case of adaptive graphical lasso may be found in \cite{mypaper}

\begin{lemma}[by \cite{janamb}] \label{mbrem}
	Let $\hatTheta$ be yielded by the node-wise procedure with $\lambda_n \asymp \sqrt{\frac{\log p}{n}}$. Then under \ref{subGaussianVector}, \ref{boundedEigen} and sparsity assumption  \eqref{sparsityAss}
	
	\begin{equation}
	\infnorm{r} = O_p\left(\frac{d\log p}{{n}}\right).
	\end{equation}

\end{lemma}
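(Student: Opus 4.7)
The plan is to first obtain an algebraic identity that isolates the residual as a product of $O_P$-small factors, then bound each factor using the known column-wise rates for the node-wise lasso together with the sub-Gaussian concentration of the empirical covariance. Writing $\Delta \coloneqq \hatTheta - \trueTheta$ and expanding the de-sparsified estimator defined in \eqref{despdef},
\begin{equation}
\hat T = 2\trueTheta + \Delta + \Delta^T - \trueTheta \hatSigma \trueTheta - \trueTheta \hatSigma \Delta - \Delta^T \hatSigma \trueTheta - \Delta^T \hatSigma \Delta
\end{equation}
using $\trueTheta^T = \trueTheta$. Since $\trueTheta \trueSigma = I$ the linearization target equals $2\trueTheta - \trueTheta \hatSigma \trueTheta$, and subtracting, followed by the identity $I - \trueTheta \hatSigma = -\trueTheta(\hatSigma - \trueSigma)$, gives
\begin{equation}
r = -\trueTheta(\hatSigma - \trueSigma)\Delta - \Delta^T(\hatSigma - \trueSigma)\trueTheta - \Delta^T \hatSigma \Delta.
\end{equation}

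By \ref{sigmaconcentration} one has $\infnorm{\hatSigma - \trueSigma} = O_P(\sqrt{\log p / n})$, and the Meinshausen--Buhlmann--type analysis of node-wise lasso under \ref{boundedEigen} delivers uniform column-wise rates
\begin{equation}
\max_j \onenorm{\Delta_{\cdot j}} = O_P\bigl(d\sqrt{\log p/n}\bigr), \qquad \max_j \twoNorm{\Delta_{\cdot j}}^2 = O_P(d\log p / n).
\end{equation}
For the cross terms, H\"older's inequality yields
\begin{equation}
\abs{(\trueTheta(\hatSigma - \trueSigma)\Delta)_{ij}} \le \onenorm{\trueTheta_{i\cdot}} \cdot \infnorm{\hatSigma - \trueSigma} \cdot \onenorm{\Delta_{\cdot j}},
\end{equation}
and by row-sparsity of $\trueTheta$ combined with Cauchy--Schwarz and the bounded spectrum assumption, $\onenorm{\trueTheta_{i\cdot}} \le \sqrt{d}\, \twoNorm{\trueTheta_{i\cdot}} = O(\sqrt{d})$. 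For the quadratic term, $\abs{(\Delta^T \hatSigma \Delta)_{ij}} \le \twoNorm{\Delta_{\cdot i}} \twoNorm{\Delta_{\cdot j}} \maxLambda{\hatSigma}$, where $\maxLambda{\hatSigma}$ is controlled by concentration under \ref{boundedEigen}. Assembling the pieces gives $\infnorm{r} = O_P(d\log p/n)$.

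The main obstacle lies in the bookkeeping of matrix norms so as not to accumulate superfluous factors of $d$: the analysis must consistently use the column-wise $\ell_1$ (respectively $\ell_2$) rates for $\Delta$ and pair them with a compatible norm on $\hatSigma - \trueSigma$ (resp.\ on $\hatSigma$); a naive entrywise bound inflates each factor and would degrade the rate to $d^2 \log p/n$, violating the sparsity assumption \eqref{sparsityAss}. A secondary difficulty is establishing the uniform column rates on $\Delta$ itself, which requires promoting single-regression node-wise lasso guarantees to hold simultaneously over all $p$ regressions via a union bound and appealing to the compatibility/restricted-eigenvalue conditions implied by \ref{boundedEigen}.
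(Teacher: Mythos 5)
The paper provides no internal proof here: the lemma is stated purely as a citation to \cite{janamb}, so you are reconstructing the argument from scratch. Your algebraic identity $r = -\trueTheta(\hatSigma-\trueSigma)\Delta - \Delta^T(\hatSigma-\trueSigma)\trueTheta - \Delta^T\hatSigma\Delta$ is correct and is indeed the right starting point, but the two bounds you then apply do not survive the regime $p\gg n$, and so the claimed rate $O_P(d\log p/n)$ is not actually reached.

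For the cross term, the H\"older split $\abs{(\trueTheta(\hatSigma-\trueSigma)\Delta)_{ij}} \le \onenorm{\trueTheta_{i\cdot}}\,\infnorm{\hatSigma-\trueSigma}\,\onenorm{\Delta_{\cdot j}}$ combined with $\onenorm{\trueTheta_{i\cdot}}=O(\sqrt d)$ gives $\sqrt d\cdot\sqrt{\log p/n}\cdot d\sqrt{\log p/n}=d^{3/2}\log p/n$, a factor $\sqrt d$ too large. The fix (and what \cite{janamb} actually exploits) is that $\trueTheta(\hatSigma-\trueSigma)$ concentrates entrywise \emph{without} the $\sqrt d$ penalty: its $(i,k)$ entry is $\frac1n\sum_s(\trueTheta_{i\cdot}X_s)(X_s)_k - \trueSigma^{-1}_{ik}$, a centered average of products of sub-Gaussian scalars whose sub-Gaussian constants are uniformly bounded under \ref{boundedEigen}, so $\infnorm{\trueTheta(\hatSigma-\trueSigma)}=O_P(\sqrt{\log p/n})$; pairing this directly with $\onenorm{\Delta_{\cdot j}}=O_P(d\sqrt{\log p/n})$ gives the desired $O_P(d\log p/n)$. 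For the quadratic term, the claim that $\maxLambda{\hatSigma}$ ``is controlled by concentration under \ref{boundedEigen}'' is false when $p\gg n$: $\hatSigma$ has rank at most $n$ and its operator norm is of order $p/n$, which diverges. The quantity to bound is not $\maxLambda{\hatSigma}\twoNorm{\Delta_{\cdot i}}\twoNorm{\Delta_{\cdot j}}$ but rather $\sqrt{\Delta_{\cdot i}^T\hatSigma\Delta_{\cdot i}}\sqrt{\Delta_{\cdot j}^T\hatSigma\Delta_{\cdot j}}$, whose factors are the lasso \emph{prediction} errors $O_P(d\log p/n)$ (controlled via a restricted-eigenvalue condition, not via the spectrum of $\hatSigma$). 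Alternatively, and more in the spirit of \cite{janamb}, use the KKT conditions of the node-wise lasso to get $\infnorm{\hatSigma\hatTheta_{\cdot j}-e_j}\lesssim\lambda_n$, hence $\infnorm{\hatSigma\Delta_{\cdot j}}=O_P(\sqrt{\log p/n})$, and then $\abs{\Delta_{\cdot i}^T\hatSigma\Delta_{\cdot j}}\le\onenorm{\Delta_{\cdot i}}\,\infnorm{\hatSigma\Delta_{\cdot j}}=O_P(d\log p/n)$. As written, both of your term-by-term bounds fail; the decomposition is sound, but the ``bookkeeping'' you flag as the main obstacle is precisely where the argument breaks.
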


\section*{Acknowledgements}
We thank Vladimir Spokoiny and Karsten Tabelow for comments and discussions that greatly improved the manuscript. We also thank Jun Li and Haeran Cho for providing the implementation of the approach proposed in \cite{twoCov} and in \cite{Cho2015} respectively. The last but not the least, we thank André Brechmann for providing the data of the study \cite{Puschmann} funded by the German Science Foundation (DFG).

The research of Valeriy Avanesov has been partially funded by Deutsche Forschungsgemeinschaft (DFG) - SFB1294/1 - 318763901.

\bibliographystyle{plain}
\bibliography{changePointPaper}

\end{document}